\newcommand{\email}[1]{{\tt #1}}
\newcommand{\R}{\mathbb{R}}
\newcommand{\norm}[1]{\|#1\|}
\newcommand{\dist}[1]{{\rm dist}(#1)}
\newcommand{\mv}{\,\mid\,}
\newcommand{\B}{{\cal B}}
\newcommand{\G}{{\cal G}}
\newcommand{\K}{{\cal K}}
\newcommand{\Sp}{{\cal S}}
\newcommand{\F}{{\cal F}}
\newcommand{\Lag}{{\cal L}}
\newcommand{\longsetto}[1]{\mathop{\longrightarrow}\limits^#1}
\newcommand{\skalp}[1]{\langle #1\rangle}
\newcommand{\xb}{\bar x}
\newcommand{\yb}{\bar y}
\newcommand{\lb}{\bar\lambda}
\newcommand{\AT}[2]{{\textstyle{#1\atop#2}}}
\newcommand{\oo}{o}
\newcommand{\range}{{\rm Range\,}}
\newcommand{\lin}{{\rm lin\,}}
\newcommand{\Span}{{\rm span\,}}
\newcommand{\co}{{\rm conv\,}}
\newcommand{\gph}{\mathrm{gph}\,}
\newcommand{\dom}{\mathrm{dom}\,}
\newcommand{\tto}{\rightrightarrows}
\newcommand{\Limsup}{\mathop{{\rm Lim}\,{\rm sup}}}
\newcommand{\myvec}[1]{\left(\begin{array}{c}#1\end{array}\right)}
\newcommand{\ssstar}{semismooth$^{*}$ }
\newtheorem{theorem}{Theorem}[section]
\newtheorem{proposition}[theorem]{Proposition}
\newtheorem{remark}[theorem]{Remark}
\newtheorem{lemma}[theorem]{Lemma}
\newtheorem{corollary}[theorem]{Corollary}
\newtheorem{definition}[theorem]{Definition}
\newtheorem{example}[theorem]{Example}
\newtheorem{algorithm}{Algorithm}
\newtheorem{assumption}{Assumption}
\title{On a semismooth* Newton method for solving generalized equations}
\author{Helmut Gfrerer\thanks{Institute of Computational Mathematics, Johannes Kepler University
Linz, A-4040 Linz, Austria; \email{helmut.gfrerer@jku.at}}
 \and   Ji\v{r}\'{i} V. Outrata\thanks{Institute of Information Theory and Automation, Academy of
 Sciences of the Czech Republic, 18208 Prague, Czech Republic, and Centre for
              Informatics and Applied Optimization, Federation University of Australia, POB 663,
              Ballarat,  Vic 3350, Australia,  \email{outrata@utia.cas.cz}}}
\date{}
\begin{document}
\maketitle

{\footnotesize
{\bf Abstract.} In the paper, a Newton-type method for the solution of generalized equations (GEs) is
derived, where the linearization concerns both the single-valued and the multi-valued part of the
considered GE. The method is based on the new notion of semismoothness${}^*$ which, together with a
suitable regularity condition, ensure the local superlinear convergence. An implementable version of the
new method is derived for a class of GEs, frequently arising in optimization and equilibrium models.

{\bf Key words.} Newton method, semismoothness${}^*$, superlinear convergence, generalized equation,
coderivatives.

{\bf AMS Subject classification.} 65K10, 65K15, 90C33.
}

\section{Introduction}
Starting in the seventies, we observe a considerable number of works devoted to the solution of
generalized and nonsmooth
 equations via a Newton-type method, cf., e.g., the surveys \cite{IzSo15}  and \cite{KlKum18}, the
 monographs \cite{KlKum02}  and \cite{IzSo14} and the references therein.
 Concerning generalized equations (GEs), first results can be found in the papers of N. Josephy
 \cite{Jo79a},
 \cite{Jo79b}. The idea consists in the linearization of the
 single-valued part of the GE so that in the Newton step one
 solves typically an affine variational inequality or a linear
 complementarity problem. The first Newton method for nonsmooth equations has been suggested
in 1988 in a pioneering paper by B. Kummer \cite{Ku88}. This method, based on generalized derivatives,
has been thereafter worked out to various types of non-smooth equations and can be used, after an
appropriate reformulation, also in the case of some variational inequalities and complementarity
problems, cf.
\cite{FaPa03}.

In 1977, R. Mifflin \cite{Mif77} introduced the notion of {\em semismooth} real-valued function which
plays an important role in nonsmooth optimization, cf. \cite{SchrZo92}. Later, this notion has been
extended to vector-valued mappings (\cite{QiSun93}) and it turned out that this property implies the
first of the two principal conditions required in \cite{Ku88} to achieve superlinear convergence. This
relationship is thoroughly explained in \cite[Chapters 6 and 10]{KlKum02}. As a consequence, one uses the
terminology {\em semismooth Newton method} for a large family of Newton-type methods based on the
conceptual scheme from \cite{Ku88} and tailored to various types of nonsmooth equations.

In connection with the metric subregularity of multifunctions, in \cite{HO01} the semismoothness was
extended to sets
 and in \cite{DoGfrKruOut18} the authors introduced a very similar property for multifunctions via
 a relationship between the
  graph and the directions in the respective directional limiting coderivative. This new property,
  called
   semismoothness$^{*}$ in the present paper, enables us, among other things, to construct a
   semismooth$^{*}$ Newton
    method for GEs, very different from the Josephy-Newton method in \cite{Jo79a}, \cite{Jo79b} and
    all its later extensions and modifications. The principal difference consists in the fact that
    the "linearization"  concerns not only the single-valued part but the whole GE. At the same
    time, this method opens some new possibilities even when applied to nonsmooth equations.

The outline of the paper is as follows. In the preliminary Section 2 one finds the necessary background
from variational analysis together with some useful auxiliary results. In Section 3 we introduce the
semismooth$^{*}$ sets and mappings, characterize them in terms of standard (regular and limiting)
coderivatives and investigate thoroughly their relationship to semismooth sets from
\cite{HO01} and the semismooth vector-valued mappings introduced in \cite{QiSun93}. Moreover, in
this section also some basic classes of \ssstar sets and mappings are  presented. The main results are
collected in Sections 4 and 5. In particular, Section 4 contains the basic conceptual version of the new
method suggested for the numerical solution of the general inclusion 
\[
0 \in F(x),
\]
 where $F:\mathbb{R}^{n} \rightrightarrows \mathbb{R}^{n}$. In this version the ``linearization''
in the Newton step is performed on the basis of the limiting coderivative of $F$.
 In many situations of practical importance, however, $F$ is not \ssstar at the solution.
 Nevertheless, on the basis of a modified regular coderivative it is often possible to construct a
 modification of the limiting coderivative, with respect to which $F$ is \ssstar in a generalized
 sense. This enables us to suggest a  generalized version of the new method which exhibits
 essentially the same convergence properties as the basic one.

Both basic as well as generalized version include the so-called approximation step  in which one computes
an approximative projection of the outcome from the Newton step onto the graph of $F$. This is a big
difference with respect to the Josephy Newton methods.

In Section 5 we apply the generalized variant to the frequently arising GE, where $F$ amounts to the sum
of a smooth mapping and the
 normal-cone mapping related to a non-degenerate constraint system. A suitable
 modification of the regular coderivative is found and it
 is shown that F is \ssstar with respect to the respective modification of the limiting
 coderivative. Finally we derive implementable procedures both
 for the approximation as well as for the Newton step. As a result
 one thus obtains a locally superlinearly convergent Newton-type
 method for a class of GEs without assuming the metric regularity of F.
 As shown by a simple example, the method of Josephy may not be
 always applicable to this class of problems because the
 linearized problems need not have a solution.

Our notation is standard. Given a linear space ${\cal L}$, ${\cal L}^\perp$ denotes its orthogonal
complement and for a closed cone $K$ with vertex at the origin, $K^\circ$ signifies its (negative) polar.
$\Sp_{\R^n}$  stands for the unit sphere in $\R^n$ and $\B_\delta(x)$ denotes the closed ball around $x$
with radius $\delta$. Further, given a multifunction $F$, $\gph F:=\{(x,y)\mv y\in F(x)\}$ stands for its
graph. For an element $u\in\R^n$, $\norm{u}$ denotes its Euclidean norm and $[u]$ is the linear space
generated by $u$. In a product space we use the norm $\norm{(u,v)}:=\sqrt{\norm{u}^2+\norm{v}^2}$. Given
a matrix $A$, we employ the operator norm $\norm{A}$ with respect to the Euclidean norm and the Frobenius
norm $\norm{A}_F$. $Id_s$ is the identity matrix in $\mathbb{R}^s$. Sometimes we write only $Id$.

\section{Preliminaries}
Throughout the whole paper, we will make an extensive use of the following basic notions of modern
variational analysis.
 \begin{definition}\label{DefVarGeom}
 Let $A$  be a closed set in $\mathbb{R}^{n}$ and $\bar{x} \in A$. Then
\begin{enumerate}
 \item [(i)]
 $T_{A}(\bar{x}):=\Limsup\limits_{t\searrow 0} \frac{A-\bar{x}}{t}$
 is the {\em tangent (contingent, Bouligand) cone} to $A$ at $\bar{x}$ and
 $ \widehat{N}_{A}(\bar{x}):=(T_{A}(\bar{x}))^{\circ} $
 is the {\em regular (Fr\'{e}chet) normal cone} to $A$ at $\bar{x}$.
 \item [(ii)]
 $ N_{A}(\bar{x}):=\Limsup\limits_{\stackrel{A}{x \rightarrow \bar{x}}} \widehat{N}_{A}(x)$
 is the {\em limiting (Mordukhovich) normal cone} to $A$ at $\bar{x}$ and, given a direction $d
 \in\mathbb{R}^{n}$,
$ N_{A}(\bar{x};d):= \Limsup\limits_{\stackrel{t\searrow 0}{d^{\prime}\rightarrow
 d}}\widehat{N}_{A}(\bar{x}+ td^{\prime})$
 is the {\em directional limiting normal cone} to $A$ at $\bar{x}$ {\em in direction} $d$.
 \end{enumerate}
\end{definition}

If $A$ is convex, then $\widehat{N}_{A}(\bar{x})= N_{A}(\bar{x})$ amounts to the classical normal cone in
the sense of convex analysis and we will  write $N_{A}(\bar{x})$. By the definition, the limiting normal
cone coincides with the directional limiting normal cone in direction $0$, i.e.,
$N_A(\bar{x})=N_A(\bar{x};0)$, and $N_A(\bar{x};d)=\emptyset$ whenever $d\not\in T_A(\bar{x})$.

In the sequel, we will also employ  the so-called critical cone. In the setting of Definition
\ref{DefVarGeom} with an  given normal $d^{*}\in \widehat{N}_{A}(\bar{x})$, the cone
\[
\K_{A}(\bar{x},d^{*}):= T_{A}(\bar{x}) \cap [d^{*}]^{\perp}
\]
is called the {\em critical cone} to $A$ at $\bar{x}$ with respect to $d^{*}$.

The above listed cones enable us to describe the local behavior of set-valued maps via various
generalized derivatives. Consider a closed-graph multifunction $F:\R^n\tto\R^m$ and the point
$(\xb,\yb)\in \gph F$.

\begin{definition}\label{DefGenDeriv}
\begin{enumerate}
\item [(i)]
The multifunction $D F(\xb,\yb):\mathbb{R}^{n} \rightrightarrows\mathbb{R}^{m}$, defined by
\[
DF(\xb,\yb)(u):= \{v \in \mathbb{R}^{m}| (u,v)\in T_{\gph F}(\xb,\yb)\}, u \in
\mathbb{R}^{n}
\]
 is called the {\em graphical derivative} of $F$ at $(\xb,\yb)$.
\item[(ii)]
 The multifunction $\widehat D^\ast F(\xb,\yb ):
 \mathbb{R}^{m}\rightrightarrows\mathbb{R}^{n}$, defined by
\[
\widehat D^\ast F(\xb,\yb )(v^\ast):=\{u^\ast\in \mathbb{R}^{n} | (u^\ast,- v^\ast)\in \widehat
N_{\gph F}(\xb,\yb )\}, v^\ast\in \mathbb{R}^{m}
\]
is called the {\em regular (Fr\'echet) coderivative} of $F$ at $(\xb,\yb )$.
\item [(iii)]
 The multifunction $D^\ast F(\xb,\yb ): \mathbb{R}^{m}\rightrightarrows\mathbb{R}^{n}$,
 defined by
\[
D^\ast F(\xb,\yb )(v^\ast):=\{u^\ast\in \mathbb{R}^{n} | (u^\ast,- v^\ast)\in N_{\gph F}(\xb,\yb )\},
v^\ast\in \mathbb{R}^{m}
\]
is called the {\em limiting (Mordukhovich) coderivative} of $F$ at $(\xb,\yb )$.
\item [(iv)]
 Given a pair of directions $(u,v) \in \mathbb{R}^{n} \times \mathbb{R}^{m}$, the
 multifunction\\
 $D^\ast F((\xb,\yb ); (u,v)):
 \mathbb{R}^{n}\rightrightarrows\mathbb{R}^{m}$, defined by
\begin{equation*}
D^\ast  F((\xb,\yb ); (u,v))(v^\ast):=\{u^\ast \in \mathbb{R}^{n} | (u^\ast,-v^\ast)\in N_{\gph
F}((\xb,\yb ); (u,v)) \}, v^\ast\in \mathbb{R}^{m}
\end{equation*}
is called the {\em directional limiting coderivative} of $F$ at $(\xb,\yb )$ in direction $(u,v)$.
\end{enumerate}
\end{definition}

For the properties of the cones $T_A(\bar{x})$, $\widehat N_A(\bar{x})$ and $N_A(\bar{x})$ from
Definition
\ref{DefVarGeom} and generalized derivatives (i), (ii) and (iii) from Definition \ref{DefGenDeriv}
we refer the interested reader to the monographs \cite{RoWe98} and \cite{Mo18}. The directional limiting
normal cone and coderivative were introduced by the first author in \cite{Gfr13a} and various properties
of these objects can be found also in \cite{GO3} and the references therein. Note that $D^\ast  F(\xb,\yb
)=D^\ast  F((\xb,\yb ); (0,0))$ and that $\dom D^\ast  F((\xb,\yb ); (u,v))=\emptyset$ whenever $v\not\in
DF(\xb,\yb)(u)$.

If $F$ is single-valued, $\yb=F(\xb)$ and we write simply $DF(\xb)$, $\widehat D^*F(\xb)$ and
$D^*F(\xb)$. If $F$ is Fr\'echet differentiable at $\xb$, then
\begin{equation}\label{EqRegCoDerivFDiff}
  \widehat D^*F(\xb)(v^*)=\{\nabla F(\xb)^Tv^*\}
\end{equation}
and, if $F$ is even strictly differentiable at $\xb$, then $D^*F(\xb)(v^*)=\{\nabla F(\xb)^Tv^*\}$.

If the single-valued mapping $F$ is Lipschitzian near $\xb$, denote by $\Omega_F$ the set
\[\Omega_F:=\{x\in\R^n \mv F \mbox{ is differentiable at }x\}.\]
The set
\[\overline\nabla F(\xb):=\{A\in\R^{m\times n}\mv \exists (u_k)\longsetto{{\Omega_F}}\xb\mbox{ such
that }\nabla F(u_k)\to A\}\] is called the {\em B-subdifferential} of $F$ at $\xb$. The {\em Clarke
generalized Jacobian} of $F$ at $\xb$ amounts then to $\co \overline\nabla F(\xb)$. One can prove, see
e.g. \cite[Theorem 9.62]{RoWe98} that
\begin{equation}
  \label{EqConvCoDeriv}\co D^*F(\xb)(v^*)=\{A^Tv^*\mv A\in\co\overline\nabla F(\xb)\}.
\end{equation}
By the definition of $\overline\nabla F(\xb)$ and \eqref{EqRegCoDerivFDiff} we readily obtain
\[\{A^Tv^*\mv A\in \overline\nabla F(\xb)\}\subseteq D^*F(\xb)(v^*).\]

The following iteration scheme, which goes back to Kummer \cite{Ku88}, is an attempt for solving the
nonlinear system $F(x)=0$, where $F:\R^n\to\R^n$ is assumed to be locally Lipschitzian.
\begin{algorithm}[Newton-type method for nonsmooth systems]\label{AlgSemiSmoothNewton}\mbox{ }\\
 1. Choose a starting point $x^{(0)}$, set the iteration counter $k:=0$.\\
  2. Choose $A^{(k)}\in \co\overline\nabla F(x^{(k)})$ and  compute the new iterate
  $x^{(k+1)}=x^{(k)}-{A^{(k)}}^{-1}F(x^{(k)}).$\\
  3. Set $k:=k+1$ and go to 2.
\end{algorithm}
In order to ensure locally superlinear convergence of this algorithm to a zero $\xb$ one has to impose
some assumptions. Firstly, all the matrices ${A^{(k)}}^{-1}$ should be  uniformly bounded, which can be
ensured by the assumption that all matrices $A\in \co \overline\nabla F(\xb)$ are nonsingular. Secondly,
we need an estimate of the form
\[0=F(\xb)=F(x^{(k)})+A^{(k)}(\xb-x^{(k)})+\oo(\norm{\xb-x^{(k)}}).\]
A popular tool how the validity of this estimate could be ensured is the notion of semismoothness
(\cite{Mif77},\cite{QiSun93}). %
\begin{definition}\label{DefSS}
  Let $U\subseteq\R^n$ be nonempty and open. The function
$F:U\to\R^m$ is {\em semismooth} at $\xb\in U$, if it is  Lipschitz near $\xb$ and if
\[\lim_{\AT{A\in\co\overline\nabla F(\xb+tu')}{u'\to u,\ t\downarrow 0}}Au'\]
exists for all $u\in\R^n$. If F is semismooth at all $\xb\in U$, we call $F$ {\em semismooth on} $U$.
\end{definition}

Given a closed convex cone $K\subset\R^n$ with vertex at the origin, then
\[\lin K:=K\cap (-K)\]
denotes the {\em lineality} space of $K$, i.e., the largest linear space contained in $K$. Denoting by
$\Span K$ the linear space spanned by $K$, it holds that
\[\Span K=K+(-K),\ (\lin K)^\perp=\Span K^\circ, \ (\Span K)^\perp = \lin K^\circ.\]

A subset $C'$ of a convex set $C\subset\R^n$ is called a {\em face} of $C$, if it is convex and if for
each line segment $[x,y]\subseteq C$ with $(x,y)\cap C'\not=\emptyset$ one has $x,y\in C'$. The faces of
a polyhedral convex cone $K$ are exactly the sets of the form
\[\F=K\cap[v^*]^\perp\ \mbox{for some}\ v^*\in K^\circ.\]

\begin{lemma}\label{LemLinealityD}
  Let $D\subset \R^s$ be a convex polyhedral set. For every pair $( d,\lambda)\in\gph N_D$ there
  holds
  \begin{align}\label{EqLinealitySp}&\lin T_D( d)=\lin \K_D( d,\lambda)\subseteq \K_D(
  d,\lambda)\subseteq T_D(d),\\
  \label{EqPolarLinealitySp}&N_D( d)\subseteq \K_D( d,\lambda)^\circ \subseteq (\lin T_D( d))^\perp
  = \Span N_D( d).
  \end{align}
  Furthermore, for every $(\bar d,\lb)\in\gph N_D$ there is a neighborhood $U$ of $(\bar d,\lb)$
  such that for every $(d,\lambda)\in\gph N_D\cap U$ there is a face $\F$ of the critical cone
  $\K_D(\xb,\lb)$ such that $\lin T_D(d)=\Span \F$ and consequently $\Span N_D(d)=(\Span
  \F)^\perp$.
\end{lemma}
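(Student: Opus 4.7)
For \eqref{EqLinealitySp} and \eqref{EqPolarLinealitySp} the argument is direct from the definitions. Since $D$ is convex, $\lambda\in N_D(d)=T_D(d)^\circ$, so for any $v\in\lin T_D(d)$ the inequalities $\langle\lambda,\pm v\rangle\le 0$ force $\langle\lambda,v\rangle=0$; hence $v\in\K_D(d,\lambda)$. As $\lin T_D(d)$ is a linear subspace inside $\K_D(d,\lambda)$, it embeds into $\lin\K_D(d,\lambda)$; the reverse inclusion $\lin\K_D(d,\lambda)\subseteq\lin T_D(d)$ is immediate from $\K_D(d,\lambda)\subseteq T_D(d)$. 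This yields \eqref{EqLinealitySp}. Passing to polars in the chain $\lin T_D(d)\subseteq \K_D(d,\lambda)\subseteq T_D(d)$, noting that the polar of a linear subspace equals its orthogonal complement and invoking the identity $(\lin K)^\perp=\Span K^\circ$ recalled just before the lemma gives \eqref{EqPolarLinealitySp}.

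For the second assertion I would exploit the polyhedrality of $D$. Writing $D=\{v\in\R^s\mv\langle a_i,v\rangle\le b_i,\ i\in I\}$ and $I_D(v):=\{i\in I\mv\langle a_i,v\rangle=b_i\}$, a standard upper-semicontinuity argument yields a neighborhood of $\bar d$ on which $I_D(d)\subseteq I_D(\bar d)$, together with the identities $T_D(d)=\{w\mv\langle a_i,w\rangle\le 0,\,i\in I_D(d)\}$ and therefore $\lin T_D(d)=\{w\mv\langle a_i,w\rangle=0,\,i\in I_D(d)\}$. The central tool is then the classical reduction lemma for polyhedral normal-cone mappings (Robinson; cf.\ Dontchev--Rockafellar): after shrinking the neighborhood, $(d,\lambda)\in\gph N_D$ close to $(\bar d,\lb)$ is equivalent to $\lambda-\lb\in N_{\K}(d-\bar d)$, where $\K:=\K_D(\bar d,\lb)$. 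In particular $d-\bar d\in\K$, and I would take $\F$ to be the unique face of $\K$ whose relative interior contains $d-\bar d$; since a face of a cone contains the origin, its span coincides with its affine hull.

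It remains to identify this affine hull with $\lin T_D(d)$. Using $\K=T_D(\bar d)\cap[\lb]^\perp$ and expanding $\lb=\sum_{i\in I_D(\bar d)}\mu_i a_i$ with $\mu_i\ge 0$, the relation $\langle\lb,d-\bar d\rangle=0$ together with the non-positivity of each summand forces the support of this representation to lie inside $I_D(d)$. Consequently the extra equality $\langle\lb,v\rangle=0$ cutting $\K$ out of $T_D(\bar d)$ is already implied by $\langle a_i,v\rangle=0$ for $i\in I_D(d)$, so the constraints of $\K$ active at $d-\bar d\in\ri\F$ are precisely those indexed by $I_D(d)$, giving $\Span\F=\{w\mv\langle a_i,w\rangle=0,\,i\in I_D(d)\}=\lin T_D(d)$. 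The final equality $\Span N_D(d)=(\Span\F)^\perp$ then drops out of \eqref{EqPolarLinealitySp}. The main technical hurdle I expect is the clean invocation of the reduction lemma and the verification that the equality $\langle\lb,\cdot\rangle=0$ defining $\K$ contributes nothing extra to $\Span\F$ beyond what the inequalities active at $d-\bar d$ already impose; everything else is bookkeeping with active index sets.
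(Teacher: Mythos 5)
Your proof of \eqref{EqLinealitySp} and \eqref{EqPolarLinealitySp} matches the paper's argument exactly. For the local face statement you take a genuinely different route, although both you and the paper rely on the polyhedral reduction lemma \cite[Lemma~4H.2]{DoRo14}. The paper uses the decomposition $\K_D(d,\lambda)=\F_1-\F_2$ with $\F_2\subseteq\F_1$ faces of $\K_D(\bar d,\lb)$, shows by a facial midpoint argument that $\lin(\F_1-\F_2)=\F_2-\F_2=\Span\F_2$, and then invokes the already established identity $\lin T_D(d)=\lin\K_D(d,\lambda)$ to conclude $\lin T_D(d)=\Span\F_2$ and takes $\F:=\F_2$. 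You instead extract from the reduction only the containment $d-\bar d\in\K_D(\bar d,\lb)$, set $\F$ to be the minimal face of $\K_D(\bar d,\lb)$ containing $d-\bar d$, and compute $\Span\F$ directly through an inequality representation of $D$ and its active index sets. Your route is more concrete---it pins the face down explicitly and gives an independent computation of $\lin T_D(d)$---while the paper's is coordinate-free and shorter because it piggybacks on \eqref{EqLinealitySp}. One step in your argument is correct but should be made explicit: passing from $d-\bar d\in\ri\F$ to $\Span\F=\{w\mid\langle a_i,w\rangle=0,\ i\in I_D(d)\}$ uses that $d-\bar d$ strictly satisfies every inequality of $\K_D(\bar d,\lb)$ indexed by $I_D(\bar d)\setminus I_D(d)$, so these inequalities impose no additional equalities on the affine hull of $\F$. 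This holds by your very choice of $\F$, but it is the hinge on which the identification $\Span\F=\lin T_D(d)$ turns and deserves an explicit sentence.
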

\begin{proof}
  For every $w\in \lin T_D(d)$ we have $\pm w\in T_D( d)$ and therefore $\pm\skalp{\lambda,w}\leq
  0$ because of $\lambda\in N_D(d)$. This yields $\skalp{\lambda,w}=0$ and consequently
\[\lin T_D(d)\subseteq \K_D(d,\lambda)\subseteq T_D( d)\]
and, by dualizing, \eqref{EqPolarLinealitySp} follows. Since we also have $\K_D(d,\lambda)\subseteq T_D(
d)$, we obtain $\lin\K_D( d,\lambda)\subseteq \lin T_D( d)\subseteq \lin\K_D( d,\lambda)$ implying
\eqref{EqLinealitySp}.

 By \cite[Lemma 4H.2]{DoRo14} there is a neighborhood $U$ of $(\bar d,\lb)$ such that for every
 $(d,\lambda)\in \gph N_D\cap U$ there are two faces $\F_2\subseteq\F_1$ of $\K_D(\bar d,\lb)$ such
 that $\K_D(d, \lambda)=\F_1-\F_2$. We claim that $\lin (\F_1-\F_2)=\F_2-\F_2$. The inclusion $\lin
 (\F_1-\F_2)\supseteq\F_2-\F_2$ trivially holds since $\F_2-\F_2=\Span \F_2$ is a subspace. Now
 consider $w\in  \lin (\F_1-\F_2)=(\F_1-\F_2)\cap (\F_2-\F_1)$. Then there are $u_1,u_2\in \F_1$
 and $v_1,v_2\in \F_2$ such that $w=u_1-v_1=v_2-u_2$ implying $\frac 12 u_1+\frac 12 u_2=\frac
 12(v_1+v_2)$, i.e., the point $\frac 12(v_1+v_2)\in\F_2$ is the midpoint of the line segment
 connecting $u_1,u_2\in\F_1\subseteq \K_D(g(\xb),\lb)$. Since $\F_2$ is  a face of
 $\K_D(g(\xb),\lb)$, $u_1, u_2\in\F_2$ follows and thus $w\in \F_2-\F_2$. Thus our claim holds true
 and from \eqref{EqLinealitySp} we obtain $\lin T_D(d) =\lin\K_D(d,\lambda)=\F_2-\F_2=\Span \F_2$.
 This completes the proof of the lemma.
\end{proof}

\section{\label{SecSS}On \ssstar sets and mappings}
\begin{definition}\label{DefSemiSmooth}
\begin{enumerate}
\item  A set $A\subseteq\R^s$ is called {\em \ssstar} at a point $\xb\in A$ if for all $u\in
    \R^s$ it holds
  \begin{equation}\label{EqSemiSmoothSet}\skalp{x^*,u}=0\ \forall x^*\in N_A(\xb;u).
  \end{equation}
\item
A set-valued mapping $F:\R^n\tto\R^m$ is called {\em \ssstar} at a point $(\xb,\yb)\in\gph F$, if
$\gph F$ is \ssstar at $(\xb,\yb)$, i.e., for all $(u,v)\in\R^n\times\R^m$ we have
\begin{equation}\label{EqSemiSmooth}
\skalp{u^*,u}=\skalp{v^*,v}\ \forall (v^*,u^*)\in\gph D^*F((\xb,\yb);(u,v)).
\end{equation}
\end{enumerate}
\end{definition}

In the above definition the   \ssstar sets and mappings have been defined via directional limiting normal
cones and coderivatives. In some situations, however, it is convenient to make use of equivalent
characterizations in terms of standard (regular and limiting) normal cones and coderivatives,
respectively.

\begin{proposition}\label{PropCharSemiSmoothSets}Let $A\subset \R^s$ and $\xb\in A$ be given. Then
the following three statements are equivalent.
\begin{enumerate}
\item[(i)] $A$ is \ssstar at $\xb$.
\item[(ii)] For every $\epsilon>0$ there is some $\delta>0$ such that
\begin{equation}\label{EqCharSemiSmoothSetReg}
\vert \skalp{x^*,x-\xb}\vert\leq \epsilon \norm{x-\xb}\norm{x^*}\ \forall x\in \B_\delta(\xb)\
\forall x^*\in  \widehat N_A(x);\end{equation}
\item[(iii)] For every $\epsilon>0$ there is some $\delta>0$ such that
  \begin{equation}\label{EqCharSemiSmoothSetLim}
\vert \skalp{x^*,x-\xb}\vert\leq \epsilon \norm{x-\xb}\norm{x^*}\ \forall x\in \B_\delta(\xb)\
\forall x^*\in N_A(x).
\end{equation}
\end{enumerate}
\end{proposition}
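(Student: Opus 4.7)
The implication chain I would establish is (iii)$\Rightarrow$(ii)$\Rightarrow$(i)$\Rightarrow$(iii). The first is immediate from $\widehat N_A(x)\subseteq N_A(x)$, so once the bound in (iii) holds for every limiting normal it automatically holds for every regular normal. I would state this and move on.

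For (ii)$\Rightarrow$(i), I would unwind the sequential definition of the directional limiting normal cone. Fix $u\in\R^s$ and $x^*\in N_A(\xb;u)$; choose $t_k\searrow 0$, $u_k\to u$ and $x_k^*\to x^*$ with $x_k^*\in\widehat N_A(\xb+t_ku_k)$ (so in particular $\xb+t_ku_k\in A$). For any $\epsilon>0$, once $k$ is large the point $x=\xb+t_ku_k$ lies in $\B_\delta(\xb)$, so \eqref{EqCharSemiSmoothSetReg} gives $|\skalp{x_k^*,u_k}|\leq \epsilon\norm{u_k}\norm{x_k^*}$. Passing to the limit yields $|\skalp{x^*,u}|\leq \epsilon\norm{u}\norm{x^*}$, and since $\epsilon$ is arbitrary $\skalp{x^*,u}=0$, which is exactly \eqref{EqSemiSmoothSet}.

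The main work is (i)$\Rightarrow$(iii), which I would do by contradiction. Suppose (iii) fails: there exist $\epsilon_0>0$, $x_k\to\xb$ with $x_k\neq\xb$, and $x_k^*\in N_A(x_k)\setminus\{0\}$ with $|\skalp{x_k^*,x_k-\xb}|>\epsilon_0\norm{x_k-\xb}\norm{x_k^*}$. Since $N_A(x_k)$ is a cone I can rescale to $\norm{x_k^*}=1$, and then extract subsequences with $u_k:=(x_k-\xb)/\norm{x_k-\xb}\to u$ and $x_k^*\to v^*$ on the unit sphere; in particular $|\skalp{v^*,u}|\geq\epsilon_0$. The obstacle is that $x_k^*$ is only a limiting normal at $x_k$, not a regular one, so I cannot feed $(x_k,x_k^*)$ directly into the definition of $N_A(\xb;u)$. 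I would resolve this by a diagonal extraction: for each $k$ pick $y_k\xrightarrow{A}x_k$ and $y_k^*\in\widehat N_A(y_k)$ with $y_k^*\to x_k^*$, and select indices so that $\norm{y_k-x_k}<\norm{x_k-\xb}/k$ and $\norm{y_k^*-x_k^*}<1/k$. Writing $y_k-\xb=\norm{x_k-\xb}u_k+(y_k-x_k)$, the perturbation is of lower order, so $s_k:=\norm{y_k-\xb}\searrow 0$ and $w_k:=(y_k-\xb)/s_k\to u$, while $y_k^*\to v^*$. By the definition of the directional limiting normal cone, $v^*\in N_A(\xb;u)$, and then (i) forces $\skalp{v^*,u}=0$, contradicting $|\skalp{v^*,u}|\geq\epsilon_0$.

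The only subtlety I expect is the bookkeeping in the diagonal step, specifically making sure the approximation error $\norm{y_k-x_k}$ is small relative to $\norm{x_k-\xb}$ (not just small absolutely) so the direction $w_k$ still converges to $u$; the bound $\norm{y_k-x_k}<\norm{x_k-\xb}/k$ is exactly what is needed. Everything else is a routine unwinding of definitions, and the three implications together close the equivalence.
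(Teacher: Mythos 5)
Your proof is correct, but it closes the equivalence along a different path than the paper. The paper proves the two implications (i)$\Leftrightarrow$(ii) by contraposition in both directions, and then disposes of (ii)$\Leftrightarrow$(iii) in a single sentence as ``an immediate consequence of the definition of limiting normals'' (the nontrivial direction being a short density argument: any limiting normal $x^*\in N_A(x)$ is a limit of regular normals $y_j^*\in\widehat N_A(y_j)$ at points $y_j\to x$, and the bound \eqref{EqCharSemiSmoothSetReg} passes to the limit). You instead run the cycle (iii)$\Rightarrow$(ii)$\Rightarrow$(i)$\Rightarrow$(iii), where (iii)$\Rightarrow$(ii) is trivial from $\widehat N_A\subseteq N_A$, (ii)$\Rightarrow$(i) is a direct unwinding of the sequential definition (essentially the contrapositive of the paper's (i)$\Rightarrow$(ii)), and the weight falls on (i)$\Rightarrow$(iii), which you handle by a diagonal extraction: since the bad normals $x_k^*$ live only in the limiting cone $N_A(x_k)$, you approximate each by a regular normal $y_k^*\in\widehat N_A(y_k)$ at a nearby $y_k\in A$, with the errors $\norm{y_k-x_k}<\norm{x_k-\xb}/k$ controlled relative to $\norm{x_k-\xb}$ so the normalized directions still converge to $u$. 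That relative (not merely absolute) control is exactly the point that makes the diagonal step go through, and you flagged it correctly. In effect you have folded the routine density argument behind (ii)$\Rightarrow$(iii) into the (i)$\Rightarrow$(iii) contradiction, which makes your version a bit longer but equally sound; the paper's decomposition is slightly more economical because it isolates the trivial part once and for all.
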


\begin{proof}
Assuming  that $A$ is not \ssstar at $\xb$, there is $u\not=0$, $0\not=u^*\in N_A(\xb;u)$   such that
$\epsilon':=\vert \skalp{u^*,u}\vert >0$. By the definition of  directional limiting normals there are
sequences $t_k\downarrow 0$, $u_k\to u$, $u_k^*\to u^*$ such that $u_k^*\in \widehat N_A(\xb+t_ku_k)$.
Then for all $k$ sufficiently large we have $\vert \skalp{u_k^*,u_k}\vert
>\epsilon'/2$ implying
\begin{eqnarray*}\vert \skalp{u_k^*,(\xb+t_ku_k)-\xb}\vert >\frac{\epsilon'}2 t_k =
\frac{\epsilon'}{2\norm{u_k^*}\norm{u_k}}\norm{(\xb+t_ku_k)-\xb}\norm{u_k^*}.\end{eqnarray*}
Hence statement (ii) does not hold for $\epsilon=\epsilon'/\big(4\norm{u^*}\norm{u}\big)$ and the
implication (i)$\Rightarrow$(ii) is shown.

In order to prove the reverse implication we assume that (ii) does not hold, i.e., there is some
$\epsilon>0$ together with sequences $x_k\to\xb$ and $x_k^*$ such that $x_k^*\in\widehat N_A(x_k)$ and
\[\vert \skalp{x_k^*,x_k-\xb}\vert> \epsilon \norm{x_k-\xb}\norm{x_k^*}\]
holds for all $k$. It follows that $x_k-\xb\not=0$ and $x_k^*\not=0$ $\forall k$ and, by possibly passing
to a subsequence, we can assume that the sequences $(x_k-\xb)/\norm{x_k-\xb}$ and $x_k^*/\norm{x_k^*}$
converge to some $u$ and $u^*$, respectively. Then $u^*\in N_A(\xb;u)$ and
\[\vert \skalp{u^*,u}\vert =\lim_{k\to\infty}\frac{\vert
\skalp{x_k^*,x_k-\xb}\vert}{\norm{x_k-\xb}\norm{x_k^*}}>\epsilon\]
showing that $A$ is not \ssstar at $\xb$. This proves the implication (ii)$\Rightarrow$(i).

Finally, the equivalence between (ii) and (iii) is an immediate consequence of the definition of limiting
normals.
\end{proof}

By simply using Definition  \ref{DefSemiSmooth} (part 2) we obtain from Proposition
\ref{PropCharSemiSmoothSets} the following corollary.
\begin{corollary}\label{CorCharSemiSmooth}Let $F:\R^n\tto\R^m$ and $(\xb,\yb)\in \gph F$ be given.
Then the following three statements are equivalent
\begin{enumerate}
\item[(i)] $F$ is \ssstar at $(\xb,\yb)$.
\item[(ii)] For every $\epsilon>0$ there is some $\delta>0$ such that
\begin{equation}\label{EqCharSemiSmoothReg}
\hspace{-1cm}\vert \skalp{x^*,x-\xb}-\skalp{y^*,y-\yb}\vert\leq \epsilon
\norm{(x,y)-(\xb,\yb)}\norm{(x^*,y^*)}\ \forall(x,y)\in \B_\delta(\xb,\yb)\ \forall
(y^*,x^*)\in\gph \widehat D^*F(x,y).\end{equation}
\item[(iii)] For every $\epsilon>0$ there is some $\delta>0$ such that
  \begin{equation}\label{EqCharSemiSmoothLim}
\hspace{-1cm}\vert \skalp{x^*,x-\xb}-\skalp{y^*,y-\yb}\vert\leq \epsilon
\norm{(x,y)-(\xb,\yb)}\norm{(x^*,y^*)}\ \forall(x,y)\in \B_\delta(\xb,\yb)\ \forall
(y^*,x^*)\in\gph D^*F(x,y).
\end{equation}
\end{enumerate}
\end{corollary}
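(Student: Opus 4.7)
The plan is to derive the corollary as a direct specialization of Proposition \ref{PropCharSemiSmoothSets} to the closed set $A := \gph F \subseteq \R^n \times \R^m$ at the point $(\xb, \yb)$. By Definition \ref{DefSemiSmooth}(2), $F$ is \ssstar at $(\xb, \yb)$ precisely when $\gph F$ is \ssstar at $(\xb, \yb)$ as a set, so the three conditions (i)--(iii) of Proposition \ref{PropCharSemiSmoothSets}, applied with $A = \gph F$, will furnish the three statements of the corollary once we translate their normal-cone phrasing into coderivative language.

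For the translation, I would first write the conclusion of Proposition \ref{PropCharSemiSmoothSets}(ii), applied with $A = \gph F$ and base point $(\xb, \yb)$, in the form
\[
\vert \skalp{z^*, z - (\xb, \yb)} \vert \leq \epsilon \norm{z - (\xb, \yb)} \norm{z^*} \quad \forall z \in \B_\delta((\xb, \yb)),\ \forall z^* \in \widehat N_{\gph F}(z).
\]
Parametrizing $z = (x, y)$ and, as dictated by Definition \ref{DefGenDeriv}(ii), writing $z^* = (x^*, -y^*)$ with $(y^*, x^*) \in \gph \widehat D^* F(x, y)$, a short calculation gives
\[
\skalp{z^*, z - (\xb, \yb)} = \skalp{x^*, x - \xb} - \skalp{y^*, y - \yb}, \qquad \norm{z^*} = \norm{(x^*, y^*)},
\]
the second identity because the Euclidean norm is unchanged by a sign flip in one component. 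Substituting these gives exactly \eqref{EqCharSemiSmoothReg}, so (ii) of the corollary is equivalent to (ii) of the proposition. The passage between (iii) of the proposition and (iii) of the corollary is identical in structure, using the defining relation $x^* \in D^* F(x, y)(y^*) \Leftrightarrow (x^*, -y^*) \in N_{\gph F}(x, y)$ from Definition \ref{DefGenDeriv}(iii).

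I do not anticipate a genuine obstacle here: the entire argument is a bookkeeping exercise on the definitions, with no new analytic content beyond what is already contained in Proposition \ref{PropCharSemiSmoothSets}. The only point requiring attention is the sign convention $(u^*, -v^*)$ hidden in the definition of the coderivative, but both the inner product and the norm behave compatibly with that sign flip, so the estimates transfer between the set-theoretic and the coderivative formulations without any adjustment of constants.
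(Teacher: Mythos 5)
Your proposal is correct and follows exactly the paper's route: the authors state the corollary as an immediate consequence of Proposition \ref{PropCharSemiSmoothSets} applied to $A=\gph F$, with the same sign-flip bookkeeping between normal cones of the graph and coderivatives that you spell out. No discrepancy.
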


On the basis of Definition \ref{DefSemiSmooth},  Proposition \ref{PropCharSemiSmoothSets} and Corollary
\ref{CorCharSemiSmooth} we may now specify some fundamental classes of \ssstar sets and mappings.

\begin{proposition}\label{PropSSConSet}
Let $A \subset \mathbb{R}^{s}$ be a closed convex set. Then $A$ is \ssstar at each $\bar{x} \in A$.
\end{proposition}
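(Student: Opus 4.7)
The plan is to verify Definition \ref{DefSemiSmooth}(1) directly: for every $u\in\R^s$ and every $x^*\in N_A(\bar x;u)$, we need $\skalp{x^*,u}=0$. The key ingredient is that since $A$ is closed and convex, at every point $x\in A$ the regular and limiting normal cones coincide with the classical convex normal cone; in particular every $x^*\in N_A(x)$ satisfies $\skalp{x^*,y-x}\leq 0$ for all $y\in A$. Consequently two applications of this convex-normal inequality, on two different scales, will pin $\skalp{x^*,u}$ between $0$ and $0$.

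Fix $u\in\R^s$ and $x^*\in N_A(\bar x;u)$. Unpacking Definition \ref{DefVarGeom}(ii), I would produce sequences $t_k\downarrow 0$, $u_k\to u$, $x_k^*\to x^*$ with $x_k^*\in\widehat N_A(\bar x+t_ku_k)$; the nonemptiness of the regular normal cone forces $\bar x+t_ku_k\in A$. Applying the convex normal inequality to $x_k^*$ at $\bar x+t_ku_k$, tested against $y=\bar x\in A$, gives $-t_k\skalp{x_k^*,u_k}\leq 0$, whence $\skalp{x_k^*,u_k}\geq 0$. Passing to the limit yields $\skalp{x^*,u}\geq 0$.

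For the reverse inequality I would use the trivial inclusion $N_A(\bar x;u)\subseteq N_A(\bar x)$, which follows at once from Definition \ref{DefVarGeom}(ii) by taking $x=\bar x+td'$. The convex-normal inequality for $x^*\in N_A(\bar x)$, tested against $y=\bar x+t_ku_k\in A$, yields $t_k\skalp{x^*,u_k}\leq 0$, hence $\skalp{x^*,u_k}\leq 0$, and letting $k\to\infty$ gives $\skalp{x^*,u}\leq 0$. Combined with the previous bound this forces $\skalp{x^*,u}=0$, as required.

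There is no genuine obstacle. The only point that needs care is the verification that the approximating sequence $\bar x+t_ku_k$ actually lies in $A$, so that convexity can be applied from both sides; this is automatic since $\widehat N_A$ is empty outside $A$. One could alternatively route the argument through Proposition \ref{PropCharSemiSmoothSets}(iii), but this would require estimating $\skalp{x^*,x-\bar x}$ from above for arbitrary $x^*\in N_A(x)$ with $x$ close to $\bar x$, which seems less direct than the two-scale convex-normal calculation sketched above.
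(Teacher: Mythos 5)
Your proof is correct. The paper's argument is essentially a one-liner: it cites a known identity (from Gfrerer's Lemma 2.1 in the reference \cite{Gfr14a}) stating that for a closed convex set $A$ one has $N_A(\bar x;u)=\{x^*\in N_A(\bar x)\mid \skalp{x^*,u}=0\}$, from which semismoothness$^*$ is immediate. You instead re-derive from scratch the inclusion $N_A(\bar x;u)\subseteq\{x^*\in N_A(\bar x)\mid\skalp{x^*,u}=0\}$, which is all that is needed: the convex normal inequality at the approximating points $\bar x+t_ku_k\in A$ tested against $\bar x$ gives $\skalp{x^*,u}\geq 0$, while the convex normal inequality at $\bar x$ (using $N_A(\bar x;u)\subseteq N_A(\bar x)$) tested against $\bar x+t_ku_k$ gives $\skalp{x^*,u}\leq 0$. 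Both steps are sound, including the observation that $\bar x+t_ku_k$ necessarily lies in $A$ because $\widehat N_A$ is empty off $A$. The trade-off: the paper's route is shorter and records the full equality formula for the directional normal cone (which is reused elsewhere), while your route is self-contained and shows concretely that only elementary convex separation, applied in two directions, is at work. Either is acceptable; a referee might prefer yours if the target audience is not already familiar with the directional normal cone machinery.
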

\begin{proof}
Since $N_{A}(\bar{x};u)= \{x^{*}\in N_{A}(\bar{x})|\langle x^{*},u\rangle=0 \}$ by virtue of
\cite[Lemma 2.1]{Gfr14a}, the statement follows immediately from the definition.
\end{proof}

\begin{proposition}
  Assume that we are given closed sets $A_i\subset\R^s$, $i=1,\ldots p$, and $\xb\in
  A:=\bigcup_{i=1}^p A_i$. If the sets $A_i$, $i\in \bar I:=\{j\mv\xb\in A_j\}$, are \ssstar at
  $\xb$, then so is the set $A$.
\end{proposition}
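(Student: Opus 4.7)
The plan is to use the characterization of semismoothness$^*$ via limiting normal cones given in Proposition \ref{PropCharSemiSmoothSets}(iii) rather than working directly with the directional limiting normal cones. That is, I would show that for every $\epsilon>0$ there is some $\delta>0$ such that the inequality \eqref{EqCharSemiSmoothSetLim} holds for $A$ at $\xb$, and then deduce that $A$ is \ssstar at $\xb$.

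First, I would localize the problem. For every $i\notin\bar I$, the closedness of $A_i$ and the fact that $\xb\notin A_i$ give $\bdist{\xb,A_i}>0$. Choosing $\delta_0>0$ smaller than all these finitely many positive distances, I obtain
\[
A\cap \B_{\delta_0}(\xb)=\Big(\bigcup_{i\in\bar I}A_i\Big)\cap \B_{\delta_0}(\xb),
\]
so that only the sets $A_i$ with $i\in\bar I$ contribute near $\xb$. The key auxiliary fact I would then use is the standard inclusion
\[
N_A(x)\subseteq \bigcup_{i\in\bar I,\ x\in A_i} N_{A_i}(x)\quad\forall x\in A\cap \B_{\delta_0}(\xb).
\]
This follows by picking any sequence $x_k\to x$ in $A$ with $\widehat N_A(x_k)\ni x_k^*\to x^*$, passing to a subsequence so that all $x_k$ lie in a common $A_i$ with $i\in\bar I$ (by finiteness of the union), observing $\widehat N_A(x_k)\subseteq \widehat N_{A_i}(x_k)$ because $A_i\subseteq A$, and concluding $x\in A_i$ and $x^*\in N_{A_i}(x)$ by closedness of $A_i$ and the definition of the limiting normal cone.

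Once these two ingredients are in place, the proof is immediate. Given $\epsilon>0$, I would invoke Proposition \ref{PropCharSemiSmoothSets}(iii) for each of the finitely many sets $A_i$, $i\in\bar I$, to obtain radii $\delta_i>0$ such that \eqref{EqCharSemiSmoothSetLim} holds for $A_i$ on $\B_{\delta_i}(\xb)$. Then I would set $\delta:=\min\{\delta_0,\min_{i\in\bar I}\delta_i\}$. For any $x\in \B_\delta(\xb)\cap A$ and any $x^*\in N_A(x)$, the localization and the union inclusion yield an index $i\in\bar I$ with $x\in A_i$ and $x^*\in N_{A_i}(x)$, hence
\[
\vert\skalp{x^*,x-\xb}\vert\leq \epsilon\norm{x-\xb}\norm{x^*},
\]
which is exactly \eqref{EqCharSemiSmoothSetLim} for $A$. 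Applying Proposition \ref{PropCharSemiSmoothSets} once more shows that $A$ is \ssstar at $\xb$.

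I do not expect any real obstacle; the only point to be slightly careful about is the inclusion of limiting normal cones to the union in those of its members, which hinges on the finiteness of the family (so that one can extract a constant-index subsequence) together with the monotonicity $\widehat N_A(x)\subseteq \widehat N_{A_i}(x)$ whenever $A_i\subseteq A$ and $x\in A_i$.
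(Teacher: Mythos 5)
Your proof is correct and essentially mirrors the paper's: both localize so that only the active sets $A_i$, $i\in\bar I$, contribute near $\xb$, and then transfer the semismoothness$^*$ of each $A_i$ to the union $A$ via the containment of normal cones of $A$ in those of the active members, concluding with Proposition \ref{PropCharSemiSmoothSets}. The paper invokes part (ii) of that proposition together with the identity $\widehat N_A(x)=\bigcap_{i\in I(x)}\widehat N_{A_i}(x)$, whereas you invoke part (iii) and replace the identity by the easy inclusion $\widehat N_A(x_k)\subseteq\widehat N_{A_i}(x_k)$ plus a constant-index subsequence extraction (valid precisely because the union is finite); these are two equivalent realizations of the same argument.
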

\begin{proof}
  Fix any $\epsilon>0$ and choose according to Proposition \ref{PropCharSemiSmoothSets}
  $\delta_i>0$, $i\in\bar I$, such that for every $i\in\bar I$, every $x\in \B_{\delta_i}(\xb)$ and
  every $x^*\in \widehat N_{A_i}(x)$ there holds
  \begin{equation*}\vert\skalp{x^*,x-\xb}\vert\leq \epsilon \norm{x^*}\norm{x-\xb}.\end{equation*}
  Since the sets $A_i$, $i=1,\ldots,p$, are assumed to be closed, there is some
  $0<\delta\leq\min\{\delta_i\mv i\in\bar I\}$ such that
  \[I(x):=\{j\mv x\in A_j\}\subset \bar I\ \forall x\in \B_\delta(\xb).\]
  Using the identity $\widehat N_A(x)=\bigcap_{i\in I(x)}\widehat N_{A_i}(x)$ valid for every $x\in
  A$ it follows that \eqref{EqCharSemiSmoothSetReg} holds. Thus the assertion follows from
  Proposition \ref{PropCharSemiSmoothSets}.
\end{proof}

Thus, in particular, the union of finitely  many closed convex sets is \ssstar at every point. We obtain
that
\begin{enumerate}
    \item A closed convex multifunction $F:\R^n\tto\R^m$ is \ssstar at every point
        $(\xb,\yb)\in\gph F$.
    \item A polyhedral multifunction $F:\R^n\tto\R^m$ is \ssstar at every point
        $(\xb,\yb)\in\gph F$. In particular, for every convex polyhedral set $D\subset\R^s$ the
        normal cone mapping $N_D$ is \ssstar at every point of its graph.
  \end{enumerate}

  Since the semismoothness$^{*}$ of mappings is defined via the graph, it follows from Corollary
  \ref{CorCharSemiSmooth} that $F:\mathbb{R}^{n} \rightrightarrows \mathbb{R}^{m}$ is \ssstar at
  $(\bar{x},\bar{y})\in
  \gph F $ if and only if $F^{-1}: \mathbb{R}^{m} \rightrightarrows \mathbb{R}^{n}$ is \ssstar at
  $(\bar{y},\bar{x})$. Indeed, the relation (\ref{EqCharSemiSmoothLim}) can be rewritten as

  \[
  \begin{split}
  | \langle y^{*},y-\bar{y}\rangle + \langle x^{*},x-\bar{x}\rangle | \leq \varepsilon \|
  (y,x)-(\bar{y},\bar{x})\|~\| (y^{*},x^{*})\| ~ & \forall (y,x)\in \B_{\delta}(\bar{y},\bar{x})
  \\
  & \forall (-x^{*},-y^{*})\in \gph D^{*}F^{-1}(y,x),
  \end{split}
  \]
  which is, in turn, is equivalent to the semismoothness$^{*}$ of $F^{-1}$ at $(\bar{y},\bar{x})$.

  In some cases of practical importance one  has
  \[
  F(x)=f(x)+Q(x),
  \]
 where $f: \mathbb{R}^{n}  \rightarrow \mathbb{R}^{n}$ is continuously differentiable and $Q:
 \mathbb{R}^{n} \rightrightarrows  \mathbb{R}^{n}$ is a closed-graph multifunction.

 \begin{proposition}
Let $\bar{y}\in F(x)$ and $Q$  be \ssstar at $(\bar{x},\bar{y}- f(\bar{x}))$. Then $F$ is \ssstar at
$(\bar{x},\bar{y})$.
  \end{proposition}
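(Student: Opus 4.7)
The plan is to verify characterization (ii) of Corollary \ref{CorCharSemiSmooth} for $F$ at $(\xb,\yb)$, drawing on the same characterization applied to $Q$ at $(\xb,\zb)$ with $\zb := \yb - f(\xb)$. The key structural fact is the regular coderivative sum rule
\[
  \widehat D^* F(x,y)(y^*) = \nabla f(x)^T y^* + \widehat D^* Q(x,\, y - f(x))(y^*),
\]
which I would obtain by noting that the map $\Phi(x,z) := (x,\, z + f(x))$ is a $C^1$-diffeomorphism of $\R^n \times \R^n$ sending $\gph Q$ onto $\gph F$, and transporting regular normals via the inverse transpose of its Jacobian.

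Given this sum rule, I would fix $(x,y)$ close to $(\xb,\yb)$ and $(y^*, x^*) \in \gph \widehat D^* F(x,y)$, and set $z := y - f(x)$ together with $q^* := x^* - \nabla f(x)^T y^*$, so that $(y^*, q^*) \in \gph \widehat D^* Q(x,z)$. A direct computation then delivers the decomposition
\[
  \skalp{x^*, x - \xb} - \skalp{y^*, y - \yb} = \big(\skalp{q^*, x - \xb} - \skalp{y^*, z - \zb}\big) - \skalp{y^*, r(x)},
\]
where $r(x) := f(x) - f(\xb) - \nabla f(x)(x - \xb)$ vanishes to first order at $\xb$ by the $C^1$-assumption on $f$.

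To bound the right-hand side by $\epsilon \norm{(x,y)-(\xb,\yb)} \norm{(x^*,y^*)}$ I would combine three ingredients. First, local Lipschitz continuity of $f$ near $\xb$ provides a constant $C$, independent of $(x,y)$ in a neighbourhood of $(\xb,\yb)$, with $\norm{(x,z)-(\xb,\zb)} \leq C \norm{(x,y)-(\xb,\yb)}$ and $\norm{(q^*, y^*)} \leq C \norm{(x^*, y^*)}$. Second, the semismoothness$^{*}$ of $Q$ at $(\xb,\zb)$, in the form of Corollary \ref{CorCharSemiSmooth}(ii), guarantees, for any preassigned $\epsilon' > 0$, a radius on which $|\skalp{q^*, x - \xb} - \skalp{y^*, z - \zb}| \leq \epsilon' \norm{(x,z)-(\xb,\zb)} \norm{(q^*, y^*)}$. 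Third, $\norm{r(x)} = \oo(\norm{x-\xb})$ by continuous differentiability of $f$. Choosing $\epsilon' := \epsilon/(2C^2)$ and then shrinking $\delta$ so that $\norm{r(x)} \leq (\epsilon/2)\norm{x - \xb}$ on $\B_\delta(\xb)$ absorbs both pieces and yields the required inequality.

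The only non-routine step is the regular coderivative sum rule itself; because $\Phi$ is a genuine diffeomorphism of the whole product space, no qualification condition is required, and the remainder of the argument is simply careful bookkeeping of Lipschitz constants and of the little-$o$ remainder coming from $f$.
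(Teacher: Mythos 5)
Your proof is correct, but it follows a genuinely different route from the paper. The paper works directly with Definition \ref{DefSemiSmooth} and the \emph{directional limiting coderivative} sum rule at the fixed reference point $(\xb,\yb)$ (from \cite[formula (2.4)]{GO3}): with $w=v-\nabla f(\xb)u$ one gets $\skalp{u^*,u}=\skalp{v^*,\nabla f(\xb)u}+\skalp{z^*,u}$ and then \ssstar of $Q$ gives $\skalp{z^*,u}=\skalp{v^*,w}$, hence $\skalp{u^*,u}=\skalp{v^*,v}$ \emph{exactly}, with no remainder and no $\epsilon$--$\delta$ bookkeeping. You instead prove the $\epsilon$--$\delta$ characterization of Corollary \ref{CorCharSemiSmooth}(ii), using the \emph{regular}-coderivative sum rule at variable points $(x,y)$ near $(\xb,\yb)$ (obtained cleanly from the diffeomorphism $\Phi(x,z)=(x,z+f(x))$, so no qualification condition is needed), which forces you to isolate and absorb the second-order remainder $r(x)=f(x)-f(\xb)-\nabla f(x)(x-\xb)=\oo(\norm{x-\xb})$. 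Your estimates all check out: the constant $C$ comes from local Lipschitz continuity of $f$ and boundedness of $\nabla f$, and continuity of $\nabla f$ makes $r(x)$ uniformly small. What each approach buys: the paper's argument is shorter and exact, but leans on the directional-coderivative calculus rule from \cite{GO3}; yours is more elementary in the tools it invokes (only the standard image rule for regular normal cones under a $C^1$ diffeomorphism) at the cost of the explicit remainder analysis.
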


  \begin{proof}
 Let $(u,v)$ be an arbitrary pair of directions and $u^{*}\in D^{*}F((\bar{x},\bar{y});
 (u,v))(v^{*})$ . By virtue of \cite[formula (2.4)]{GO3} it holds that
 \[
 D^{*}F((\bar{x},\bar{y}); (u,v))(v^{*})=\nabla
 f(\bar{x})^{T}v^{*}+D^{*}Q((\bar{x},\bar{y}-f(\bar{x}));(u,v-\nabla f(\bar{x})u))(v^{*}).
 \]
 Thus, $\langle u^{*},u \rangle = \langle \nabla f (\bar{x})^{T}v^{*}+ z^{*},u\rangle $ with some
 $z^{*}\in D^{*}Q((\bar{x},\bar{z}); (u,w))(v^{*})$, where $\bar{z}=\bar{y}- f(x)$ and $w=v-\nabla
 f(\bar{x})u$. It follows that
 \[
 \langle u^{*},u\rangle = \langle v^{*}, \nabla f (\bar{x})u\rangle + \langle z^{*},u \rangle =
 \langle v^{*}, \nabla f(\bar{x})u \rangle + \langle v^{*},w \rangle
 \]
 due to the assumed semismoothness$^{*}$ of $Q$ at $(\bar{x},\bar{y} - f(\bar{x}))$. We conclude
 that $\langle u^{*},u \rangle = \langle v^{*},v\rangle  $ and the proof is complete.
  \end{proof}
 From this statement and the previous development we easily deduce that the solution map
 $S:y
\mapsto x$, related to the canonically perturbed GE
\[ y \in f(x)+N_{\Gamma}(x) \]
 is \ssstar at any $(\bar{y},\bar{x}) \in \gph S$ provided $\Gamma$ is convex polyhedral. Results
of this sort in terms of the standard semismoothness property can be found, e.g., in \cite[Theorems 6.20
and 6.21]{OKZ}.

Let us now figure out the relationship of semismoothness$^{*}$ and the classical semismoothness in case
of single-valued mappings (Definition \ref{DefSS}). To this purpose note that for a continuous
single-valued mapping $F:\R^n\to\R^m$ condition \eqref{EqCharSemiSmoothLim} is equivalent to the
requirement 

\begin{equation}\label{EqCharSemiSmoothSingle}\vert
\skalp{x^*,x-\xb}-\skalp{y^*,F(x)-F(\xb)}\vert\leq \epsilon
\norm{(x,F(x))-(\xb,F(\xb))}\norm{(x^*,y^*)}\ \forall x\in \B_\delta(\xb)\ \forall (y^*,x^*)\in\gph
D^*F(x).\end{equation}

\begin{proposition}\label{PropNewtondiff}Assume that $F:\R^n\to \R^m$ is a single-valued mapping
which is Lipschitzian near $\xb$. Then the following two statements are equivalent.
\begin{enumerate}
  \item[(i)] $F$ is \ssstar at $\xb$.
  \item[(ii)] For every $\epsilon>0$ there is some $\delta>0$ such that
  \begin{equation}\label{EqCharSemiSmoothLipsch}
    \norm{F(x)-F(\xb)-C(x-\xb)}\leq\epsilon\norm{x-\xb}\ \forall x\in \B_\delta(\xb)\ \forall
    C\in\co \overline\nabla F(x).
  \end{equation}
\end{enumerate}
\end{proposition}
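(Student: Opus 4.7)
The plan is to bridge the coderivative description of semismoothness${}^*$ supplied by Corollary~\ref{CorCharSemiSmooth}(iii) (which in the single-valued case simplifies to \eqref{EqCharSemiSmoothSingle}) and the Newton-residual bound \eqref{EqCharSemiSmoothLipsch}, using as a dictionary the identity $\co D^*F(x)(y^*)=\{A^Ty^*\mv A\in\co\overline\nabla F(x)\}$ from \eqref{EqConvCoDeriv}, together with the inclusion $\{A^Ty^*\mv A\in\overline\nabla F(x)\}\subseteq D^*F(x)(y^*)$ recorded just afterward. Throughout, let $L>0$ be a Lipschitz constant of $F$ on some fixed neighborhood of $\xb$.

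\emph{Direction (ii)$\Rightarrow$(i).} I verify \eqref{EqCharSemiSmoothSingle} directly. Given $x$ near $\xb$ and any $(y^*,x^*)\in\gph D^*F(x)$, one has $x^*\in D^*F(x)(y^*)\subseteq\co D^*F(x)(y^*)$, so by \eqref{EqConvCoDeriv} there exists $C\in\co\overline\nabla F(x)$ with $x^*=C^Ty^*$. A direct computation gives
\[\skalp{x^*,x-\xb}-\skalp{y^*,F(x)-F(\xb)}=-\skalp{y^*,F(x)-F(\xb)-C(x-\xb)},\]
whose absolute value, by Cauchy--Schwarz and (ii), is at most $\epsilon\norm{y^*}\norm{x-\xb}\leq\epsilon\norm{(x^*,y^*)}\norm{(x,F(x))-(\xb,F(\xb))}$. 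This is \eqref{EqCharSemiSmoothSingle}, hence \ssstar at $\xb$.

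\emph{Direction (i)$\Rightarrow$(ii).} This is the main step. Fix $x$ close to $\xb$ and $C\in\co\overline\nabla F(x)$, written as $C=\sum_i\lambda_iA_i$ with $A_i\in\overline\nabla F(x)$, $\lambda_i\geq 0$, $\sum_i\lambda_i=1$. The key device is to take the Newton residual itself as the dual direction on which the semismoothness${}^*$ estimate is tested: set $y^*:=F(x)-F(\xb)-C(x-\xb)$ and $x_i^*:=A_i^Ty^*\in D^*F(x)(y^*)$. A direct expansion yields
\[\norm{y^*}^2=\skalp{y^*,F(x)-F(\xb)}-\skalp{C^Ty^*,x-\xb}=\sum_i\lambda_i\bigl(\skalp{y^*,F(x)-F(\xb)}-\skalp{x_i^*,x-\xb}\bigr).\]
Each summand is bounded in absolute value, by \eqref{EqCharSemiSmoothSingle} applied to $(y^*,x_i^*)$, by $\epsilon\norm{(x_i^*,y^*)}\norm{(x,F(x))-(\xb,F(\xb))}$. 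The uniform estimate $\norm{A_i}\leq L$ gives $\norm{(x_i^*,y^*)}\leq\sqrt{1+L^2}\,\norm{y^*}$ and similarly $\norm{(x,F(x))-(\xb,F(\xb))}\leq\sqrt{1+L^2}\,\norm{x-\xb}$, so $\norm{y^*}^2\leq\epsilon(1+L^2)\norm{y^*}\norm{x-\xb}$. Cancelling $\norm{y^*}$ (trivial if $y^*=0$) and replacing $\epsilon$ by $\epsilon/(1+L^2)$ at the outset yields \eqref{EqCharSemiSmoothLipsch}.

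The main obstacle is precisely the choice of $y^*$ in the (i)$\Rightarrow$(ii) direction: once one recognizes that the Newton residual is the right test direction, both implications collapse to short algebraic manipulations built on \eqref{EqConvCoDeriv} and the Lipschitz bound $\norm{A}\leq L$ on elements of $\overline\nabla F(x)$ for $x$ near $\xb$.
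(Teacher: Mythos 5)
Your argument is correct and follows essentially the same route as the paper's proof: the implication (ii)$\Rightarrow$(i) is the same one-line manipulation via \eqref{EqConvCoDeriv}, and for (i)$\Rightarrow$(ii) the paper tests \eqref{EqCharSemiSmoothSingle} against a unit vector in the direction of the residual and applies Carath\'eodory to $C^Ty^*\in\co D^*F(x)(y^*)$, whereas you test against the residual itself and decompose $C=\sum_i\lambda_iA_i$ directly --- two interchangeable cosmetic variants of the same device, yielding the identical $(1+L^2)$ constant.
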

\begin{proof}
Let $L$ denote the modulus of Lipschitz continuity of $F$ in some neighborhood of $\xb$. In order to show
the implication (i)$\Rightarrow$(ii), fix any $\epsilon'>0$ and choose $\delta>0$ such that
\eqref{EqCharSemiSmoothSingle} holds with $\epsilon=\epsilon'/(1+L^2)$. Consider
$x\in\B_\delta(\xb)$, $C\in \co \overline\nabla  F(x)$ and choose $y^*\in\Sp_{\R^m}$ with
\[\norm{F(x)-F(\xb)-C(x-\xb)}=\skalp{y^*,F(x)-F(\xb)-C(x-\xb)}.\]
By \eqref{EqConvCoDeriv} there holds $C^Ty^*\in \co D^*F(x)(y^*)$ and therefore, by the Carath\'{e}odory
Theorem, there are elements $x_i^*\in D^*F(x)(y^*)$ and scalars $\alpha_i\geq 0$, $i=1,\ldots,N$, with
$\sum_{i=1}^N\alpha_i=1$ and $C^Ty^*=\sum_{i=1}^N\alpha_i x_i^*$. It follows from
\eqref{EqCharSemiSmoothSingle} that
\begin{eqnarray*}
  \norm{F(x)-F(\xb)-C(x-\xb)}&=&\skalp{y^*,F(x)-F(\xb)-C(x-\xb)}=\skalp{y^*,F(x)-F(\xb)}-\skalp{C^Ty^*,x-\xb}\\
  &=&\sum_{i=1}^N\alpha_i\big(\skalp{y^*,F(x)-F(\xb)}-\skalp{x_i^*,x-\xb}\big)\\
  &\leq&\sum_{i=1}^N\alpha_i\epsilon\norm{(x,F(x))-(\xb,F(\xb))}\norm{(x^*_i,y^*)}\leq\epsilon(1+L^2)\norm{x-\xb}\\
  &=&\epsilon'\norm{x-\xb},
\end{eqnarray*}
where we have taken into account $\norm{x_i^*}\leq L\norm{y^*}=L$ and $\norm{F(x)-F(\xb)}\leq
L\norm{x-\xb}$. This inequality justifies \eqref{EqCharSemiSmoothLipsch} and the implication
(i)$\Rightarrow$(ii) is verified.

Now let us show the reverse implication. Let $\epsilon>0$ and choose $\delta>0$ such that
\eqref{EqCharSemiSmoothLipsch} holds. Consider $x\in\B_\delta(\xb)$ and $(y^*,x^*)\in\gph D^*F(x)$.
Then by \eqref{EqConvCoDeriv} there is some $C\in \co \overline\nabla  F(x)$ such that $x^*\in C^Ty^*$
and we obtain
\begin{eqnarray*}\vert \skalp{x^*,x-\xb}-\skalp{y^*,F(x)-F(\xb)}\vert&=&\skalp{y^*,
C(x-\xb)-(F(x)-F(\xb))}\leq \norm{y^*}\norm{F(x)-F(\xb)-C(x-\xb)}\\ &\leq&
\norm{y^*}\epsilon\norm{x-\xb}\leq \epsilon\norm{(x,F(x))-(\xb,F(\xb))}\norm{(x^*,y^*)}.
\end{eqnarray*}
Thus the implication (ii)$\Rightarrow$(i) is established and the proposition is shown.
\end{proof}

Condition (ii) of Proposition \ref{PropNewtondiff} can be equivalently written in the form that, for any
$C\in \co\overline\nabla  F(\xb
\newline +d)$,
\begin{equation}\label{eq-202}
\norm{F(\xb+d)-F(\xb)-Cd}=\oo(\norm{d})\ \mbox{as}\ d\to 0.
\end{equation}
 In the terminology of \cite[Section 6.4.2]{KlKum02} this condition states that the mapping $x\tto
 \co \overline\nabla  F(x)$ is a {\em Newton map} of $F$ at $\xb$. This is one
 of the conditions used by Kummer \cite{Ku92} for guaranteeing superlinear convergence of a
 generalized Newton method.

If the directional derivative $F'(\xb;\cdot)$ exists (which is the same as the requirement that the
graphical derivative $DF(\xb)(\cdot)$ is single-valued), then we have, cf.  \cite{Sha90}, that
\[F(\xb+d)-F(\xb)-F'(\xb;d)=\oo(\norm{d})\ \mbox{as}\ d\to 0.\]
This relation, together with (\ref{eq-202}) and \cite[Theorem 2.3]{QiSun93} leads now directly to the
following result. 
\begin{corollary}\label{CorSSStar}Assume that $F:\R^n\to \R^m$ is a single-valued mapping which is
Lipschitzian near $\xb$. Then the following two statements are equivalent.
  \begin{enumerate}
    \item [(i)] $F$ is semismooth at $\xb$ (Definition \ref{DefSS}).
    \item[(ii)] $F$ is \ssstar at $\xb$ and $F'(\xb;\cdot)$ exists.
  \end{enumerate}
\end{corollary}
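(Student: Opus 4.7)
My plan is to exhibit both (i) and (ii) as equivalent to the same intermediate statement---``$F'(\xb;\cdot)$ exists and the Newton-map estimate \eqref{eq-202} holds''---and to conclude by transitivity. The equivalence of (ii) with this intermediate statement is essentially free: Proposition~\ref{PropNewtondiff} together with the paragraph immediately following it identifies \ssstar at $\xb$ with \eqref{EqCharSemiSmoothLipsch}, which is literally \eqref{eq-202}. Thus (ii) just says ``\eqref{eq-202} holds and $F'(\xb;\cdot)$ exists''.

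The substantive half is the equivalence of (i) with the same intermediate statement. This is exactly \cite[Theorem 2.3]{QiSun93}, which the excerpt already cites, so the slickest route is to quote it and concatenate. For a self-contained argument of (ii)$\Rightarrow$(i), I would fix $u\in\R^n$ and take arbitrary sequences $t_k\downarrow 0$, $u_k\to u$, and $A_k\in\co\overline\nabla F(\xb+t_ku_k)$. Proposition~\ref{PropNewtondiff} then gives
\[
A_k(t_ku_k)=F(\xb+t_ku_k)-F(\xb)+\oo(t_k),
\]
and Shapiro's identity \cite{Sha90}, combined with the positive homogeneity and the Lipschitz continuity of $F'(\xb;\cdot)$ inherited from $F$, rewrites the right-hand side as $t_kF'(\xb;u)+\oo(t_k)$. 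Dividing by $t_k$ and passing to the limit yields $A_ku_k\to F'(\xb;u)$, which is exactly the limit demanded by Definition~\ref{DefSS}. For the reverse direction (i)$\Rightarrow$(ii), existence of $F'(\xb;\cdot)$ is immediate from Definition~\ref{DefSS} by taking $u'=u$, while \eqref{eq-202} follows by writing any $C\in\co\overline\nabla F(\xb+d)$ as a Carath\'eodory convex combination of elements of $\overline\nabla F(\xb+d)$, applying the semismoothness limit to each summand, and invoking positive homogeneity; Proposition~\ref{PropNewtondiff} then returns \ssstar.

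The main obstacle I anticipate is the asymmetry between the two formulations: Definition~\ref{DefSS} controls $Au'$ uniformly as $u'\to u$, whereas \eqref{eq-202} a priori controls only the single direction $d$. The bridge is exactly Shapiro's identity $F(\xb+d)-F(\xb)-F'(\xb;d)=\oo(\|d\|)$ together with the Lipschitz continuity of $F'(\xb;\cdot)$, which together let one pass from an $\oo(\|d\|)$ bound along $d=t_ku_k$ to convergence of $A_ku_k$ along the auxiliary direction $u_k\to u$.
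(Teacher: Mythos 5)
Your route is essentially the paper's: the paper's entire ``proof'' is a one-sentence appeal to the Shapiro identity, to \eqref{eq-202} (i.e.\ Proposition~\ref{PropNewtondiff}), and to \cite[Theorem 2.3]{QiSun93}, which is exactly the concatenation you describe. One small caveat about the self-contained expansion you offer: in the (i)$\Rightarrow$(ii) direction, the existence of $F'(\xb;\cdot)$ is \emph{not} literally immediate from Definition~\ref{DefSS} by taking $u'=u$ --- Definition~\ref{DefSS} only asserts existence of the limit of $Au'$ over generalized Jacobian elements, and passing from that to directional differentiability requires a Lebourg mean-value argument, which is precisely one of the contents of \cite[Theorem~2.3]{QiSun93}. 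Since you explicitly defer to that theorem for the ``slickest route'', this does not affect the correctness of your main argument, but the claim that it is ``immediate'' should be dropped or replaced by the mean-value reasoning.
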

In Definition \ref{DefSemiSmooth} we have started with semismoothness$^{*}$ of sets and extended this
property to mappings via their  graphs. For the reverse direction we may use the distance function. %

\begin{proposition}
  Let $A\subset\R^s$ be closed, $\xb\in A$. Then $A$ is \ssstar at $\xb$ if and only if the
  distance function $d_A$ is \ssstar at $\xb$.
\end{proposition}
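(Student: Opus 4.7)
I plan to use Proposition~\ref{PropCharSemiSmoothSets} and work with regular normal cones, exploiting the classical formula (valid for $x\in A$) that the regular subdifferential of $d_A$ at $x$ equals $\widehat N_A(x)\cap \B_1(0)$, together with the standard characterization of the regular normal cone to the graph of a Lipschitz function via its regular sub- and upper-subdifferentials. Since $d_A$ is $1$-Lipschitz, every nonzero $(x^*,t^*)\in\widehat N_{\gph d_A}(x,d_A(x))$ satisfies $t^*\neq 0$ and $\|x^*\|\leq|t^*|$, and $x^*/|t^*|$ is a regular subgradient of $d_A$ (resp.\ of $-d_A$, up to sign) at $x$ for $t^*<0$ (resp.\ $t^*>0$).

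For the easy direction, $d_A$ \ssstar $\Rightarrow$ $A$ \ssstar: given $x\in A$ near $\bar x$ and a nonzero $x^*\in\widehat N_A(x)$, the normalized vector $x^*/\|x^*\|$ lies in $\widehat N_A(x)\cap\B_1(0)$ and is therefore a regular subgradient of $d_A$ at $x$; consequently $(x^*/\|x^*\|,-1)\in\widehat N_{\gph d_A}(x,0)$. Plugging this into the semismoothness$^*$ estimate~\eqref{EqCharSemiSmoothSetReg} for $\gph d_A$ at $(\bar x,0)$ and using $d_A(x)=d_A(\bar x)=0$ yields $|\skalp{x^*/\|x^*\|,x-\bar x}|\leq\sqrt 2\,\epsilon\|x-\bar x\|$, and multiplication by $\|x^*\|$ gives~\eqref{EqCharSemiSmoothSetReg} for $A$ up to a harmless factor.

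For the reverse direction, $A$ \ssstar $\Rightarrow$ $d_A$ \ssstar: fix $(x^*,t^*)\in\widehat N_{\gph d_A}(x,d_A(x))$ with $x$ near $\bar x$ and bound $|\skalp{x^*,x-\bar x}+t^*d_A(x)|$. If $x\in A$ and $t^*<0$, then $x^*/|t^*|\in\widehat N_A(x)$ and semismoothness$^*$ of $A$ gives $|\skalp{x^*,x-\bar x}|\leq\epsilon\|x-\bar x\|\|x^*\|\leq\epsilon\|x-\bar x\|\|(x^*,t^*)\|$; the subcases $t^*\geq 0$ with $x\in A$ are trivial (any such normal must have $x^*=0$, either by the Lipschitz bound or because $x$ must lie in the interior of $A$). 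If $x\notin A$ and $t^*<0$, I pick a nearest point $\bar y\in A$ for which $x^*/|t^*|=(x-\bar y)/d_A(x)$, so that $x^*$ is proportional to a unit proximal normal to $A$ at $\bar y$ and hence $x^*\in\widehat N_A(\bar y)$; a direct computation gives $\skalp{x^*,x-\bar y}=|t^*|d_A(x)$, producing the cancellation
\[
\skalp{x^*,x-\bar x}+t^*d_A(x)=\skalp{x^*,\bar y-\bar x}.
\]
Since $\bar y\to\bar x$ and $\|\bar y-\bar x\|\leq d_A(x)+\|x-\bar x\|\leq 2\|x-\bar x\|$, semismoothness$^*$ of $A$ applied at $\bar y$ provides the required bound. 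The case $t^*>0$ is analogous, reducing to the proximal-normal setup via the upper-subdifferential structure of $d_A$ off $A$.

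The main obstacle is the $x\notin A$ case of the reverse direction: one must match the regular normal $(x^*,t^*)$ to a suitable projection $\bar y\in P_A(x)$ so that the projection identity applies. This is immediate when $\widehat\partial d_A(x)$ (or its upper counterpart) is a singleton, but at ``cut-locus'' points where $x$ has several projections one must carry out a convex-combination/decomposition argument, using that the relevant sub- or upper-subdifferential is contained in the convex hull of the normalized proximal directions.
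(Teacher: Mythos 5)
Your easy direction is correct and essentially reproduces the paper's: normalize $x^{*}\in\widehat N_A(x)$ and recognize it as a regular subgradient of $d_A$. The skeleton of your reverse direction---normalize to a proximal normal, exploit the projection identity $\skalp{x-x',x'-\xb}=\skalp{x-x',x-\xb}-d_A(x)^{2}$, and apply semismoothness$^{*}$ of $A$ at the foot point $x'$---is also in line with the paper. But you stop exactly at the step that carries the real weight. At $x\notin A$ with $|\Pi_A(x)|\geq 2$ the regular subdifferential $\widehat\partial d_A(x)$ is empty, so the $t^{*}<0$ branch you wrote out is vacuous; everything then rides on the regular upper subdifferential $\widehat\partial^{+}d_A(x)$, which is typically a nontrivial convex set (already $A=\{-1,1\}\subset\R$, $x=0$ gives $\widehat\partial^{+}d_A(0)=[-1,1]$). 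What you would need is the inclusion $\widehat\partial^{+}d_A(x)\subseteq\co\{(x-x')/d_A(x)\,:\,x'\in\Pi_A(x)\}=\co\partial d_A(x)$ together with a Carath\'{e}odory decomposition. That inclusion is not immediate from the proximal-normal formula; it follows from the Clarke-subdifferential chain $\widehat\partial^{+}d_A(x)\subseteq\co\overline\nabla d_A(x)=\co\partial d_A(x)$, which is precisely the tool the paper adopts from the outset. The paper replaces the graph-normal characterization by Proposition~\ref{PropNewtondiff} (phrased over all $C\in\co\overline\nabla d_A(x)$), invokes $\co\overline\nabla d_A=\co\partial d_A$ together with the explicit formula for $\partial d_A$, and lets Carath\'{e}odory reduce everything to individual proximal normals and the projection computation you already wrote. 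Without invoking that chain, your cut-locus step has no proof; with it, the detour through $\widehat N_{\gph d_A}$ becomes a wrapper around the paper's argument rather than a shortcut.

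A smaller point: your justification for dismissing the $x\in A$, $t^{*}>0$ subcase (``$x$ must lie in the interior of $A$'') is not correct, although the conclusion can be salvaged. The right reason is that $0\in\widehat\partial d_A(x)$ for every $x\in A$, so $\widehat\partial(-d_A)(x)\neq\emptyset$ would force Fr\'{e}chet differentiability of $d_A$ at a global minimizer and hence $\widehat\partial^{+}d_A(x)=\{0\}$; that argument, not interiority of $x$, is what kills the case.
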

\begin{proof}
  The distance function $d_A(\cdot)$ is Lipschitzian with constant $1$ and
  \begin{equation}\label{EqSubdiffDist}\partial d_A(x)=\begin{cases}
    N_A(x)\cap \B_{1}(0)&\mbox{if $x\in A$,}\\
    \frac{x-\Pi_A(x)}{d_A(x)} &\mbox{otherwise,}
   \end{cases}
   \end{equation}
   where $\Pi_A(x):=\{z\in A\mv \norm{z-x}=d_A(x)\}$ denotes the projection on $A$, see, e.g.,
   \cite[Theorem 1.33]{Mo18}. Here, $\partial d_A(x)$ denotes the {\em (basic) subdifferential} of
   the distance function $d_A$ at $x$, see, e.g., \cite[Definition 1.18]{Mo18}.  Further, by
   \cite[Theorem 9.61]{RoWe98} we have
   \[\co\overline\nabla  d_A(x)=\co\partial d_A(x) \]
   for all $x$.

   We first show the implication ''$d_A$ is \ssstar at $\xb\ \Rightarrow\ A$ is \ssstar at $\xb$''.
   For every $x\in A$ and every $0\not=x^*\in N_A(x)$ we have $x^*/\norm{x^*}\in\partial
   d_A(x)\subseteq \co\overline\nabla  d_A(x)$. Thus, if $d_A$ is \ssstar at $\xb$, then it follows
   from Proposition \ref{PropNewtondiff} that for every $\epsilon>0$ there is some $\delta>0$ such
   that for every $x\in\B_\delta(\xb)\cap A$ we have
   \[\vert d_A(x)-d_A(\xb)-\skalp{\frac {x^*}{\norm{x^*}},x-\xb}\vert = \vert \skalp{\frac
   {x^*}{\norm{x^*}},x-\xb}\vert\leq \epsilon \norm{x-\xb}\ \forall 0\not=x^*\in N_A(x).\]
   By taking into account that \eqref{EqCharSemiSmoothSetLim} trivially holds for $x^*=0$ and that
   $N_A(x)=\emptyset$ for $x\in\B_\delta(\xb)\setminus A$, by virtue of Proposition
   \ref{PropCharSemiSmoothSets} the set $A$ is \ssstar at $\xb$.

   In order to show the reverse implication, assume that  $A$ is \ssstar at $\xb$. Fix any
   $\epsilon>0$ and choose $\delta>0$ such that \eqref{EqCharSemiSmoothSetLim} holds. We claim that
   for every $x\in\B_{\delta/2}(\xb)$ and every $x^*\in \co\overline\nabla  d_A(x)$ there holds
   \begin{equation}\label{EqAuxClaim}\vert d_A(x)-d_A(\xb)-\skalp{x^*,x-\xb}\vert \leq 2\epsilon
   \norm{x-\xb}.\end{equation}
   Consider $x\in \B_{\delta/2}(\xb)$. We first show the inequality \eqref{EqAuxClaim} for $x^*\in
   \partial d_A(x)$.
   Indeed, if $x\in A$, then \eqref{EqCharSemiSmoothSetLim} implies
   \[\vert \skalp{x^*,x-\xb}\vert =\vert d_A(x)-d_A(\xb)-\skalp{x^*,x-\xb}\vert\leq
   \epsilon\norm{x^*}\norm{x-\xb}\leq \epsilon\norm{x-\xb}\ \forall x^*\in N_A(x)\cap\B=\partial
   d_A(x).\]
  Otherwise, if $x\not\in A$, for every $x^*\in\partial d_A(x)$ there is some $x'\in\Pi_A(x)$
  satisfying $x^*=(x-x')/d_A(x)$. The vector $x-x'$ is a so-called proximal normal to $A$ at $x'$
  and therefore $x-x'\in \widehat N_A(x')\subset N_A(x')$, see \cite[Example 6.16]{RoWe98}. From
  $\norm{x'-x}\leq \norm{\xb-x}$ we obtain $\norm{x'-\xb}\leq 2\norm{x-\xb}\leq\delta$ and we may
  conclude that
   \begin{eqnarray*}\vert \skalp{x-x',x'-\xb}\vert &=&  \vert \skalp{x-x',x'-x}
   +\skalp{x-x',x-\xb}\vert \\
   &=&\vert -d_A(x)^2 +\skalp{x-x',x-\xb}\vert\leq\epsilon \norm{x-x'}\norm{x'-\xb}\leq  2\epsilon
   d_A(x)\norm{x-\xb}.\end{eqnarray*}
   Dividing by $d_A(x)$ we infer
   \[\vert d_A(x)-\skalp{\frac{x-x'}{d_A(x)}, x-\xb}\vert=\vert
   d_A(x)-d_A(\xb)-\skalp{\frac{x-x'}{d_A(x)}, x-\xb}\vert\leq 2\epsilon\norm{x-\xb}\]
   showing that \eqref{EqAuxClaim} holds true in this case as well.

   Now consider any $x^*\in \co\overline\nabla  d_A(x)=\co\partial d_A(x)$. By the Carath\'{e}odory
   Theorem there are finitely many elements $x_i^*\in\partial d_A(x)$ together with positive
   scalars $\alpha_i$, $i=1,\ldots,N$, such that $\sum_{i=1}^N\alpha_i=1$ and
   $x^*=\sum_{i=1}^N\alpha_i x_i^*$, implying
   \begin{eqnarray*}
     \vert d_A(x)-d_A(\xb)-\skalp{x^*,x-\xb}\vert &=& \vert
     \sum_{i=1}^N\alpha_i\big(d_A(x)-d_A(\xb)-\skalp{x_i^*,x-\xb}\big)\vert\\
     &\leq & \sum_{i=1}^N\alpha_i\vert d_A(x)-d_A(\xb)-\skalp{x_i^*,x-\xb}\vert \leq
     \sum_{i=1}^N\alpha_i 2\epsilon\norm{x-\xb}=2\epsilon\norm{x-\xb}.
   \end{eqnarray*}
   Thus the claimed inequality \eqref{EqAuxClaim} holds for all $x\in\B_{\delta/2}(\xb)$ and all $x^*\in
   \co \overline\nabla  d_A(x)$ and from Proposition \ref{PropNewtondiff} we conclude that $d_A$ is
   \ssstar at $\xb$.
\end{proof}
\begin{remark}Combining Proposition \ref{PropCharSemiSmoothSets} with the formula
\eqref{EqSubdiffDist} implies that a set $A$ is \ssstar at $\xb$ if and only if for every
$\epsilon>0$ there is some $\delta>0$ such that
\[ \vert\skalp{x^*,\frac{x-\xb}{\norm{x-\xb}}}\vert \leq\epsilon\ \forall x\in\B_\delta(\xb)\forall
x^*\in\partial d_A(x).\] From this relation it follows that a set is \ssstar at $\xb$ if and only if it
is semismooth in the sense of \cite[Definition 2.3]{HO01}.
\end{remark}

\section{A \ssstar Newton method}
Given a set-valued mapping $F:\R^n\tto\R^n$ with closed graph, we want to solve the generalized equation
\begin{equation}\label{EqGE}
0\in F(x).
\end{equation}

Given $(x,y)\in\gph F$ we denote by ${\cal A}F(x,y)$ the collection of all pairs of $n\times n$ matrices
$(A,B)$, such that there are $n$ elements $(v_i^*,u_i^*)\in \gph D^*F(x,y)$, $i=1,\ldots, n$, and the
$i$-th row of $A$ and $B$ are ${u_i^*}^T$ and ${v_i^*}^T$, respectively. Further we denote
\[{\cal A}_{\rm reg}F(x,y):=\{(A,B)\in {\cal A}F(x,y)\mv A\mbox{ regular}\}.\]
It turns out that the strong metric regularity of $F$ around $(x,y)$ is a sufficient condition for the
nonemptiness of ${\cal A}_{\rm reg}F(x,y)$. Recall that a set-valued mapping $F:\R^n\tto\R^m$ is {\em
strongly metrically regular} around $( x, y)\in\gph F$ (with modulus $\kappa$), if its inverse $F^{-1}$
has a Lipschitz continuous single-valued localization near $(y,x)$ (with Lipschitz constant $\kappa$),
cf. \cite{DoRo14}.
\begin{theorem}\label{ThExistRegDeriv}
  Assume that $F$ is strongly metrically  regular around $(\hat x,\hat y)\in \gph F$ with modulus
  $\kappa>0$. Then there is an $n\times n$ matrix $C$ with $\norm{C}\leq\kappa$ such that
  $(Id,C)\in {\cal A}_{\rm reg}F(\hat x,\hat y)\not=\emptyset$.
\end{theorem}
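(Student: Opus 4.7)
The plan is to exhibit the required pair $(Id,C)$ explicitly, taking for $C$ (the transpose of) any element of the B-subdifferential of a Lipschitz single-valued localization of $F^{-1}$.

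First I would invoke strong metric regularity to obtain a single-valued Lipschitz localization $\sigma$ of $F^{-1}$ near $(\hat y,\hat x)$ with $\sigma(\hat y)=\hat x$ and Lipschitz constant $\kappa$. By Rademacher's theorem $\sigma$ is differentiable almost everywhere in a neighborhood of $\hat y$, so the B-subdifferential $\overline\nabla\sigma(\hat y)$ is nonempty, and every $A\in\overline\nabla\sigma(\hat y)$ is the limit of Jacobians of $\sigma$ at points of differentiability; each such Jacobian has operator norm at most $\kappa$, hence $\norm{A}\leq\kappa$ as well.

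Next, I would transfer this matrix into a coderivative statement for $F$ itself. Fix $A\in\overline\nabla\sigma(\hat y)$. From the inclusion $\{M^Tw^\ast\mv M\in\overline\nabla\sigma(\hat y)\}\subseteq D^*\sigma(\hat y)(w^\ast)$ recorded in the preliminaries, we get $A^Tw^\ast\in D^*\sigma(\hat y)(w^\ast)$ for every $w^\ast\in\R^n$. Since coderivatives depend only on the graph locally and $\gph\sigma$ coincides with a neighborhood of $(\hat y,\hat x)$ in $\gph F^{-1}$, we have $D^*\sigma(\hat y)=D^*F^{-1}(\hat y,\hat x)$. Applying the standard duality $u^\ast\in D^*F(\hat x,\hat y)(v^\ast)\Leftrightarrow -v^\ast\in D^*F^{-1}(\hat y,\hat x)(-u^\ast)$ (which follows from the fact that $\gph F^{-1}$ is obtained from $\gph F$ by swapping coordinates), the inclusion $A^T(-e_i)\in D^*F^{-1}(\hat y,\hat x)(-e_i)$ translates into
\[
e_i \in D^*F(\hat x,\hat y)(A^Te_i),\qquad i=1,\dots,n.
\]

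Finally, I would set $u_i^\ast:=e_i$ and $v_i^\ast:=A^Te_i$ for $i=1,\dots,n$. By construction $(v_i^\ast,u_i^\ast)\in\gph D^*F(\hat x,\hat y)$, the matrix with rows $(u_i^\ast)^T=e_i^T$ is the identity $Id$, and the matrix with rows $(v_i^\ast)^T=(A^Te_i)^T=e_i^TA$ is exactly $A$ itself. Hence $(Id,A)\in\mathcal{A}F(\hat x,\hat y)$, $Id$ is trivially regular, and $\norm{A}\leq\kappa$, which proves the claim with $C:=A$.

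The only real subtlety is the passage from $\overline\nabla\sigma(\hat y)$ to the coderivative of $F$: one has to combine the B-subdifferential/coderivative inclusion for Lipschitz single-valued maps with the graph-swapping identity relating $D^*F$ and $D^*F^{-1}$. The Lipschitz-constant bound $\norm{A}\leq\kappa$ and the fact that $Id$ (not $A$) is the one required to be invertible make the verification straightforward once this translation is in place.
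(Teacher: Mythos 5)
Your proof is correct and follows essentially the same route as the paper: a Lipschitz localization $s$ of $F^{-1}$, a matrix from $\overline\nabla s(\hat y)$ with norm at most $\kappa$, the B-subdifferential/coderivative inclusion, and the graph-swapping duality between $D^*F$ and $D^*F^{-1}$. The only differences are expository (explicit mention of Rademacher's theorem for nonemptiness of the B-subdifferential, and an explicit remark that $\gph\sigma$ locally coincides with $\gph F^{-1}$), which the paper leaves implicit.
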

\begin{proof}
  Note that $-y^*\in D^*F^{-1}(\hat y,\hat x)(-x^*)$  if and only if $x^*\in D^*F(\hat x,\hat
  y)(y^*)$ , cf. \cite[Equation 8(19)]{RoWe98}. Let $s$ denote the single-valued localization of
  the inverse mapping $F^{-1}$ around $(\hat y,\hat x)$ which is Lipschitzian with modulus $\kappa$
  near $\hat y$. Next take any element $C$ from the B-subdifferential $\overline\nabla s(\yb)$.
  Then $\norm{C}\leq\kappa$ and for any $u^*$ we have $-C^Tu^*\in D^*F^{-1}(\hat y,\hat x)(-u^*)$
  and consequently $u^*\in D^*F(\hat x,\hat y)(C^Tu^*)$. Taking $u^*_i$  as the $i$-th unit vector
  and $v_i^*=C^Tu_i^*$, we obtain that $(Id,C)\in {\cal A}_{\rm reg}F(\hat x,\hat y)$.
\end{proof}
\begin{corollary}
  Let $(\hat x,\hat y)\in \gph F$ and assume that there is $\kappa>0$ and a sequence $(x_k,y_k)$
  converging to $(\hat x,\hat y)$ such that for each $k$ the mapping $F$ is strongly metrically
  regular around $(x_k,y_k)$ with modulus $\kappa$. Then there is an $n\times n$ matrix $C$ with
  $\norm{C}\leq\kappa$ such that $(Id,C)\in {\cal A}_{\rm reg}F(\hat x,\hat y)\not=\emptyset$.
\end{corollary}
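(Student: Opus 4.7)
The plan is to reduce the corollary to Theorem~\ref{ThExistRegDeriv} by a compactness/passage-to-the-limit argument. First, for each $k$ the hypothesis of Theorem~\ref{ThExistRegDeriv} is fulfilled at $(x_k,y_k)$ with modulus $\kappa$, so I obtain an $n\times n$ matrix $C_k$ with $\norm{C_k}\leq\kappa$ and $(Id,C_k)\in{\cal A}_{\rm reg}F(x_k,y_k)$. Unpacking the definition of ${\cal A}F(x_k,y_k)$: for every $i\in\{1,\dots,n\}$ there is an element $(v_{k,i}^*,u_{k,i}^*)\in\gph D^*F(x_k,y_k)$ such that $u_{k,i}^*=e_i$ (the $i$-th standard basis vector, since the $i$-th row of $A=Id$ is $e_i^T$) and $v_{k,i}^*=C_k^Te_i$ (since the $i$-th row of $B=C_k$ is $v_{k,i}^{*T}$).

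Second, I would exploit the uniform bound $\norm{C_k}\leq\kappa$. By the Bolzano--Weierstrass theorem I can extract a subsequence, still denoted $(C_k)$, converging to some matrix $C$ with $\norm{C}\leq\kappa$. Correspondingly, for each fixed $i$ the sequence $v_{k,i}^*=C_k^Te_i$ converges to $v_i^*:=C^Te_i$, while $u_{k,i}^*=e_i$ is constant in $k$.

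Third, I invoke the closed-graph property of the limiting coderivative. Since the limiting normal cone $N_{\gph F}$ is outer semicontinuous, the multifunction $(x,y)\mapsto\gph D^*F(x,y)$ has closed graph. From $(x_k,y_k)\to(\hat x,\hat y)$ and $(v_{k,i}^*,e_i)\in\gph D^*F(x_k,y_k)$ with $v_{k,i}^*\to v_i^*$ we therefore conclude $(v_i^*,e_i)\in\gph D^*F(\hat x,\hat y)$ for every $i=1,\dots,n$. Assembling these $n$ pairs into the rows of the matrices shows $(Id,C)\in{\cal A}F(\hat x,\hat y)$, and since $Id$ is trivially regular, $(Id,C)\in{\cal A}_{\rm reg}F(\hat x,\hat y)$, proving nonemptiness.

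The only delicate point is the passage to the limit in step three, which hinges on the fact that the limiting coderivative is outer semicontinuous jointly in the base point and in its argument; everything else is routine bookkeeping of matrix rows and a straightforward compactness argument.
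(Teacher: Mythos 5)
Your proof is correct and is precisely the argument the authors have in mind: the paper states this corollary without proof, treating it as an immediate consequence of Theorem~\ref{ThExistRegDeriv}, and the intended reasoning is exactly the one you supply — apply the theorem at each $(x_k,y_k)$, extract a convergent subsequence of the uniformly bounded $C_k$, and pass to the limit using outer semicontinuity of $N_{\gph F}$ (equivalently, closedness of $\gph D^*F(\cdot,\cdot)$ as a subset of $\gph F\times\R^n\times\R^n$). All steps are valid; in particular, the outer semicontinuity of the limiting normal cone relative to the set is a standard fact (cf.\ \cite[Proposition 6.6]{RoWe98}), so the limit pairs $(v_i^*,e_i)$ do land in $\gph D^*F(\hat x,\hat y)$ as you claim.
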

\begin{proposition}\label{PropConv} Assume that the mapping $F:\R^n\tto\R^n$ is \ssstar at
$(\xb,0)\in\gph F$. Then for every $\epsilon>0$ there is some $\delta>0$ such that for every $(x,y)\in
\gph F\cap \B_{\delta}(\xb,0)$ and every pair $(A,B)\in {\cal A}_{\rm reg}F(x,y)$ one has
\begin{equation}\label{EqBndNewtonStep}\norm{(x-A^{-1}By)-\xb }\leq
\epsilon\norm{A^{-1}}\norm{(A\,\vdots\, B)}_F\norm{(x,y)-(\xb,0)}.\end{equation}
\end{proposition}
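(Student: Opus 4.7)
The plan is to turn the pointwise coderivative inequality from Corollary \ref{CorCharSemiSmooth} into a matrix-level estimate on $A(x-\xb)-By$, and then invert $A$.

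First, given $\epsilon>0$, I would invoke the characterization (iii) of Corollary \ref{CorCharSemiSmooth} with $\yb=0$ to produce a $\delta>0$ such that
\[
\vert\skalp{x^*,x-\xb}-\skalp{y^*,y}\vert\leq\epsilon\,\norm{(x,y)-(\xb,0)}\,\norm{(x^*,y^*)}
\qquad\forall\,(x,y)\in\gph F\cap\B_\delta(\xb,0),\ \forall\,(y^*,x^*)\in\gph D^*F(x,y).
\]
This $\delta$ will be the one claimed in the statement.

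Next, fix such an $(x,y)$ and any $(A,B)\in{\cal A}_{\rm reg}F(x,y)$. By the very definition of ${\cal A}F(x,y)$, the $i$-th rows of $A$ and $B$ are ${u_i^*}^T$ and ${v_i^*}^T$ for some pairs $(v_i^*,u_i^*)\in\gph D^*F(x,y)$. The $i$-th component of the vector $A(x-\xb)-By$ is exactly $\skalp{u_i^*,x-\xb}-\skalp{v_i^*,y}$, so applying the inequality above row-by-row, squaring, and summing gives
\[
\norm{A(x-\xb)-By}^2\leq\epsilon^2\,\norm{(x,y)-(\xb,0)}^2\sum_{i=1}^n\big(\norm{u_i^*}^2+\norm{v_i^*}^2\big)
=\epsilon^2\,\norm{(x,y)-(\xb,0)}^2\,\norm{(A\,\vdots\,B)}_F^2,
\]
since $\sum_i\norm{u_i^*}^2=\norm{A}_F^2$ and $\sum_i\norm{v_i^*}^2=\norm{B}_F^2$.

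Finally, since $A$ is regular, the identity $(x-A^{-1}By)-\xb=A^{-1}\bigl(A(x-\xb)-By\bigr)$ together with the operator-norm bound $\norm{A^{-1}z}\leq\norm{A^{-1}}\norm{z}$ yields \eqref{EqBndNewtonStep}. There is no real obstacle here; the only point worth noting is the bookkeeping that lets the Frobenius norm of the stacked matrix $(A\,\vdots\,B)$ appear naturally from summing the pointwise semismooth$^{*}$ inequalities over the $n$ rows, which is what allows the estimate to be stated in the clean form given.
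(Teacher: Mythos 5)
Your proposal is correct and follows essentially the same route as the paper: apply the limiting-coderivative characterization \eqref{EqCharSemiSmoothLim} row-by-row to bound the components of $A(x-\xb)-By$, recognize that aggregating the row bounds yields the Frobenius norm of $(A\,\vdots\,B)$, and finally pull through $A^{-1}$ using the operator norm. The only cosmetic difference is that you explicitly square and sum, whereas the paper phrases the same step as ``the Euclidean norm of the vector with components $\norm{(u_i^*,v_i^*)}$ is exactly $\norm{(A\,\vdots\,B)}_F$.''
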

\begin{proof}
  Let $\epsilon>0$ be arbitrarily fixed, choose $\delta>0$ such that \eqref{EqCharSemiSmoothLim}
  holds and consider $(x,y)\in \gph F\cap \B_{\delta}(\xb,0)$ and  $(A,B)\in {\cal A}_{\rm
  reg}F(x,y)$. By the definition of ${\cal A}F(x,y)$ we obtain that the $i$-th component of the
  vector
  $A(x-\xb)-By$ equals to $\skalp{u_i^*,x-\xb}-\skalp{v_i^*,y-0}$ and can be bounded by
  $\epsilon\norm{(x ,y)-(\xb,0)}\norm{(u_i^*,v_i^*)}$ by \eqref{EqCharSemiSmoothLim}. Since the
  Euclidean norm of the vector with components $\norm{(u_i^*,v_i^*)}$ is exactly the Frobenius norm
  of the matrix $(A\,\vdots\, B)$, we obtain
  \[\norm{A(x-\xb)-By}\leq \epsilon \norm{(A\,\vdots\, B)}_F\norm{(x,y)-(\xb,0)}.\]
  By taking into account that
  \[\norm{(x-A^{-1}By)-\xb }=\norm{A^{-1}\big(A(x-\xb)-By\big)}\leq
  \norm{A^{-1}}\norm{A(x-\xb)-By},\]
  the estimate \eqref{EqBndNewtonStep} follows.
\end{proof}
Newton method for solving generalized equations is not uniquely defined in general. Given some iterate
$x^{(k)}$, we cannot expect in general that $F(x^{(k)})\not=\emptyset$ or that $0$ is close to
$F(x^{(k)})$, even if $x^{(k)}$ is close to a solution $\xb$. Thus we perform first some  step which
yields $(\hat x^{(k)},\hat y^{(k)})\in\gph F$ as  an approximate projection of $(x^{(k)},0)$ on $\gph F$.
Further we require that ${\cal A}_{\rm reg}F(\hat x^{(k)},\hat y^{(k)})\not=\emptyset$ and we compute the
new iterate as $x^{(k+1)}=\hat x^{(k)}-A^{-1}B\hat y^{(k)}$ for some $(A,B)\in {\cal A}_{\rm reg}F(\hat
x^{(k)},\hat y^{(k)})$.

\begin{algorithm}[\ssstar Newton-type method for generalized equations]\label{AlgNewton}\mbox{ }\\
 1. Choose a starting point $x^{(0)}$, set the iteration counter $k:=0$.\\
 2. If $0\in F(x^{(k)})$, stop the algorithm.\\
  3. Compute $(\hat x^{(k)},\hat y^{(k)})\in\gph F$ close to $(x^{(k)},0)$ such that ${\cal
  A}_{\rm reg}F(\hat x^{(k)},\hat y^{(k)})\not=\emptyset$.\\
  4. Select $(A,B)\in {\cal A}_{\rm reg}F(\hat x^{(k)},\hat y^{(k)})$ and compute the new iterate
  $x^{(k+1)}=\hat x^{(k)}-A^{-1}B\hat y^{(k)}.$\\
  5. Set $k:=k+1$ and go to 2.
\end{algorithm}

Of course, the heart of this algorithm are steps 3 and 4. We will call step 3 the {\em
approximation step} and step 4 the {\em Newton step}.

Before we continue with the analysis of this algorithm let us consider the Newton step for the special
case of a single-valued smooth mapping $F:\R^n\to\R^n$. We have $\hat y^{(k)}=F(\hat x^{(k)})$ and
$D^*F(\hat x^{(k)})(v^*)=\nabla F(\hat x^{(k)})^Tv^*$ yielding
\[{\cal A}F(\hat x^{(k)},F(\hat x^{(k)}))=\{(B\nabla F(x^{(k)}),B)\mv B\mbox{ is $n\times n$
matrix}\}.\] Thus the requirement $(A,B)\in {\cal A}_{\rm reg}F(\hat x^{(k)},F(\hat x^{(k)}))$ means that
$A=B\nabla F(\hat x^{(k)})$  is regular, i.e., both $B$  and $\nabla F(x^{(k)})$ are regular. Then the
Newton step amounts to
\[x^{(k+1)}=\hat x^{(k)}-(B\nabla F(\hat x^{(k)}))^{-1}BF(\hat x^{(k)}) = \hat x^{(k)}-\nabla
F(\hat x^{(k)})^{-1} F(\hat x^{(k)}).\] We see that it coincides with the classical Newton step for
smooth functions $F$. Note that the requirement that $B$ is regular in order to have $(A,B)\in{\cal
A}_{\rm reg}F(\hat x^{(k)},F(\hat x^{(k)}))$ is possibly not needed for general set-valued mappings $F$,
see \eqref{EqAB} below.

Next let us consider the case of a single-valued Lipschitzian mapping $F:\R^n\to\R^n$. As before we have
$\hat y^{(k)}=F(\hat x^{(k)})$ and for every $C\in \overline\nabla F(\hat x^{(k)})$ we have $C^Tv^*\in
D^*F(\hat x^{(k)})(v^*)$. Thus
\begin{equation}\label{EqInclCalA}{\cal A}F(\hat x^{(k)},F(\hat
x^{(k)}))\supseteq\bigcup_{C\in\overline\nabla F(\hat x^{(k)})}\{(BC,B)\mv B\mbox{ is an $n\times n$
matrix}\}.\end{equation} Similar as above we have that $(BC,B)\in {\cal A}_{\rm reg}F(\hat x^{(k)},F(\hat
x^{(k)}))$ if and only if both $B$ and $C$ are regular and in this case the Newton step reads as
$x^{(k+1)}=
\hat x^{(k)}-C^{-1} F(\hat x^{(k)})$. Thus the classical semismooth Newton method of \cite{QiSun93},
restricted to the B-subdifferential $\overline \nabla F(\hat x^{(k)})$ instead of the generalized
Jacobian $\co\overline \nabla F(\hat x^{(k)})$, fits into the framework of Algorithm
\ref{AlgNewton}. However, note that the inclusion \eqref{EqInclCalA} will  be strict whenever
$\overline\nabla F(\hat x^{(k)})$ is not a singleton: For every $u_i^*$, $i=1,\ldots,n$ forming the rows
of the matrix $B$ we can take a different $C_i\in \overline\nabla F(\hat x^{(k)})$, $i=1,\ldots,n$, for
generating the rows $C_i^Tu_i^*$ of the matrix $A$. When using such a construction it is no longer
mandatory to require $B$ regular in order to have $(A,B)\in {\cal A}_{\rm reg}F(\hat x^{(k)},F(\hat
x^{(k)}))$ and thus Algorithm \ref{AlgNewton} offers a variety of other possibilities, how the Newton
step can be performed.

Given two reals $L,\kappa>0$ and a solution $\xb$ of \eqref{EqGE}, we denote
\[\G_{F,\xb}^{L,\kappa}(x):=\{(\hat x,\hat y,A,B)\mv \norm{(\hat x-\xb,\hat y)}\leq L\norm{x-\xb},\
(A,B)\in {\cal A}_{\rm reg}F(\hat x,\hat y), \norm{A^{-1}}\norm{(A\,\vdots\,B)}_F\leq\kappa\}.\]
\begin{theorem}\label{ThConvSemiSmmooth1}
  Assume that $F$ is \ssstar at $(\xb,0)\in\gph F$ and assume that there are  $L,\kappa>0$ such
  that for every $x\not\in F^{-1}(0)$ sufficiently close to $\xb$ we have
  $\G_{F,\xb}^{L,\kappa}(x)\not=\emptyset$. Then there exists some $\delta>0$ such that for every
  starting point $x^{(0)}\in\B_\delta(\xb)$ Algorithm \ref{AlgNewton} either stops after
  finitely many iterations at a solution or produces a sequence $x^{(k)}$ which converges
  superlinearly to $\xb$, provided we choose in every iteration $(\hat x^{(k)},\hat y^{(k)},A,B)\in
  \G_{F,\xb}^{L,\kappa}(x^{(k)})$.
\end{theorem}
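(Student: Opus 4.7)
The proof proposal is essentially an application of Proposition~\ref{PropConv} combined with the uniform bounds encoded in the set $\G_{F,\xb}^{L,\kappa}(x)$. The plan is to first establish, for a suitably small initial neighborhood, that each iterate is well-defined and contracts toward $\xb$, and then bootstrap this into superlinear convergence by letting the constant $\epsilon$ in Proposition~\ref{PropConv} shrink as the iterates approach $\xb$.

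First I would fix some $q\in(0,1)$ and apply Proposition~\ref{PropConv} with $\epsilon_0=q/(\kappa L)$ to obtain a radius $\delta_0>0$ such that \eqref{EqBndNewtonStep} holds for every $(x,y)\in \gph F\cap\B_{\delta_0}(\xb,0)$ and every $(A,B)\in\Q_{\rm reg}F(x,y)$. Using the hypothesis that $\G_{F,\xb}^{L,\kappa}(x)\neq\emptyset$ on some neighborhood of $\xb$ (say $\B_{\delta_1}(\xb)$), I would then choose
\[
\delta := \min\{\delta_1,\ \delta_0/L\}.
\]
For any starting point $x^{(0)}\in\B_\delta(\xb)$: if $0\in F(x^{(0)})$ the algorithm stops, and we are done. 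Otherwise, we can select $(\hat x^{(0)},\hat y^{(0)},A,B)\in\G_{F,\xb}^{L,\kappa}(x^{(0)})$. Then $\|(\hat x^{(0)}-\xb,\hat y^{(0)})\|\leq L\|x^{(0)}-\xb\|\leq L\delta\leq\delta_0$, so Proposition~\ref{PropConv} applies and, combined with $\|A^{-1}\|\,\|(A\,\vdots\,B)\|_F\leq\kappa$, gives
\[
\|x^{(1)}-\xb\|=\|(\hat x^{(0)}-A^{-1}B\hat y^{(0)})-\xb\|\leq \epsilon_0\kappa L\|x^{(0)}-\xb\|=q\|x^{(0)}-\xb\|.
\]
In particular $x^{(1)}\in\B_{q\delta}(\xb)\subseteq\B_\delta(\xb)$, so the argument can be repeated inductively. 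This establishes Q-linear convergence with rate $q$, and in particular $x^{(k)}\to\xb$.

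To upgrade linear to superlinear convergence, I would re-run the same estimate with $\epsilon$ arbitrary. For an arbitrary $\epsilon>0$, Proposition~\ref{PropConv} provides a radius $\delta_\epsilon>0$; since $x^{(k)}\to\xb$, there is some index $k_\epsilon$ such that $L\|x^{(k)}-\xb\|\leq\delta_\epsilon$ for all $k\geq k_\epsilon$. For any such $k$ with $x^{(k)}\notin F^{-1}(0)$, the selection $(\hat x^{(k)},\hat y^{(k)},A,B)\in\G_{F,\xb}^{L,\kappa}(x^{(k)})$ lies in the hypothesis of Proposition~\ref{PropConv}, and repeating the chain of inequalities yields
\[
\|x^{(k+1)}-\xb\|\leq \epsilon\,\kappa L\,\|x^{(k)}-\xb\|\qquad \forall k\geq k_\epsilon.
\]
Since $\epsilon>0$ was arbitrary, $\|x^{(k+1)}-\xb\|/\|x^{(k)}-\xb\|\to 0$, i.e., the convergence is Q-superlinear. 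If the algorithm terminates finitely, it does so at a solution by step~2, which is the other alternative.

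The only real obstacle is bookkeeping: ensuring that the iterates never leave the neighborhood where $\G_{F,\xb}^{L,\kappa}$ is nonempty \emph{and} where Proposition~\ref{PropConv} is applicable with the required $\epsilon$. The first contraction estimate (with $q<1$) takes care of the former, because once $x^{(k)}\in\B_\delta(\xb)$ the next iterate lies in $\B_{q\delta}(\xb)\subseteq\B_\delta(\xb)$; the latter is then automatic from $x^{(k)}\to\xb$. No further use of the \ssstar property is needed beyond what has already been packaged into Proposition~\ref{PropConv}.
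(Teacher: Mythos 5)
Your proposal is correct and follows essentially the same route as the paper: apply Proposition~\ref{PropConv} once with a fixed $\epsilon$ (the paper picks $\epsilon=1/(2L\kappa)$, you pick $\epsilon_0=q/(\kappa L)$) to get a contraction radius, observe that the bound $\norm{(\hat x-\xb,\hat y)}\le L\norm{x-\xb}$ keeps the approximation step inside this radius, and then let $\epsilon\to 0$ to upgrade linear to superlinear convergence. You have merely spelled out what the paper dismisses as ``an easy consequence of Proposition~\ref{PropConv}''; the only slip is writing $\Q_{\rm reg}F(x,y)$ where you mean $\mathcal{A}_{\rm reg}F(x,y)$.
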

\begin{proof}
  By Proposition \ref{PropConv}, we can find some $\bar\delta>0$ such that \eqref{EqBndNewtonStep}
  holds with $\epsilon=\frac 1{2L\kappa}$ for all $(x,y)\in \gph F\cap \B_{\delta}(\xb,0)$ and all
  pairs $(A,B)\in {\cal A}_{\rm reg}F(x,y)$. Set $\delta:=\bar\delta/L$ and consider an iterate
  $x^{(k)}\in\B_\delta(\xb)\not\in F^{-1}(0)$. Then
  \[\norm{(\hat x^{(k)},\hat y^{(k)})-(\xb,0)}\leq L\norm{x^{(k)}-\xb}\leq\bar\delta\]
  and consequently
  \[\norm{x^{(k+1)}-\xb}\leq \frac 1{2L\kappa}\norm{A^{-1}}\norm{(A\,\vdots\,B)}_F
  L\norm{x^{(k)}-\xb}\leq \frac 12 \norm{x^{(k)}-\xb}\]
  by Proposition \ref{PropConv}.
  It follows that for every starting point $x^{(0)}\in\B_\delta(\xb)$  Algorithm \ref{AlgNewton}
  either stops after finitely many iterations with a solution or produces a sequence $x^{(k)}$
  converging to $\xb$. The superlinear convergence of the sequence $x^{(k)}$ is now an easy
  consequence of Proposition \ref{PropConv}.
\end{proof}
\begin{remark}\label{RemApprProjection}
    The bound $\norm{(\hat x-\xb,\hat y)}\leq L\norm{x-\xb}$ is in particular fulfilled if $(\hat
    x,\hat y)\in \gph F$ satisfies
    \[\norm{(\hat x-x,\hat y)}\leq \beta\dist{(x,0),\gph F}\]
    with some constant $\beta>0$, because then we have
    \[\norm{(\hat x-\xb,\hat y)}\leq \norm{(\hat x-x,\hat y)}+\norm{x-\xb}\leq
    \beta\dist{(x,0),\gph F}+\norm{x-\xb}\leq (\beta+1)\norm{x-\xb}.\]
\end{remark}
\begin{remark}Note that in case of a single-valued mapping $F:\R^n\to\R^n$ an approximation step of
the form $(\hat x^{(k)}, \hat y^{(k)})=(x^{(k)}, F(x^{(k)}))$ requires
$\norm{(x^{(k)}-\xb,F(x^{(k)}))}\leq L\norm{x^{(k)}-\xb}$, which is in general only fulfilled if $F$ is
calm at $\xb$, i.e., there is a positive real $L'$ such that $\norm{F(x)-F(\xb)}\leq L'\norm{x-\xb}$ for
all $x$ sufficiently near $\xb$.
\end{remark}
\begin{theorem}\label{ThConvSemiSmooth2}
  Assume that the mapping $F$ is both \ssstar at $(\xb,0)$ and  strongly metrically regular around
  $(\xb,0)$. Then all assumptions of Theorem \ref{ThConvSemiSmmooth1} are fulfilled.
\end{theorem}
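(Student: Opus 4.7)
The plan is to verify the single nontrivial hypothesis of Theorem \ref{ThConvSemiSmmooth1}, namely the existence of constants $L,\kappa>0$ such that $\G_{F,\xb}^{L,\kappa}(x)\neq\emptyset$ for every $x$ close to $\xb$ with $x\notin F^{-1}(0)$. Semismoothness$^*$ is assumed, so I only need to produce a quadruple $(\hat x,\hat y,A,B)$ meeting the three conditions in the definition of $\G$.

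The first ingredient is the well-known openness/neighborhood behavior of strong metric regularity (see, e.g., \cite[Theorem 3G.1/3F.1]{DoRo14}): if $F$ is strongly metrically regular around $(\xb,0)$ with modulus $\kappa_0$, then for any fixed $\kappa>\kappa_0$ there is a neighborhood $W$ of $(\xb,0)$ such that $F$ is strongly metrically regular around every $(x',y')\in\gph F\cap W$ with modulus $\kappa$. I would cite this and fix such a $W$ and such a $\kappa$.

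Second, I would define the approximation step via a nearest point: for $x$ near $\xb$ with $x\notin F^{-1}(0)$, let $(\hat x,\hat y)$ be any point in $\gph F$ realizing $\dist{(x,0),\gph F}$ (exists because $\gph F$ is closed). Since $(\xb,0)\in\gph F$ we get
\[\norm{(\hat x-x,\hat y)}\leq\dist{(x,0),\gph F}\leq\norm{x-\xb},\]
so $\norm{(\hat x-\xb,\hat y)}\leq 2\norm{x-\xb}$, which fixes $L:=2$. For $x$ sufficiently close to $\xb$ this also forces $(\hat x,\hat y)\in W$, whence $F$ is strongly metrically regular around $(\hat x,\hat y)$ with modulus $\kappa$.

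Third, I would invoke Theorem \ref{ThExistRegDeriv} applied at $(\hat x,\hat y)$ to obtain an $n\times n$ matrix $C$ with $\norm{C}\leq\kappa$ and $(Id,C)\in\mathcal{A}_{\rm reg}F(\hat x,\hat y)$. Setting $A:=Id$ and $B:=C$ we have $\norm{A^{-1}}=1$ and, using $\norm{C}_F\leq\sqrt{n}\,\norm{C}$,
\[\norm{A^{-1}}\,\norm{(A\,\vdots\,B)}_F\leq\sqrt{n+\norm{C}_F^2}\leq\sqrt{n(1+\kappa^2)}=:\tilde\kappa.\]
Thus $(\hat x,\hat y,A,B)\in\G_{F,\xb}^{L,\tilde\kappa}(x)$ for every $x\notin F^{-1}(0)$ sufficiently close to $\xb$, which is exactly what Theorem \ref{ThConvSemiSmmooth1} requires. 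The only subtle ingredient is the openness of strong metric regularity (second step); everything else is either already proved in the paper or a trivial norm estimate, so this is not really an obstacle, only a point where an external reference must be cited carefully.
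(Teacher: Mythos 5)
Your proof is correct and follows essentially the same route as the paper's: establish that strong metric regularity of $F$ persists at all graph points near $(\xb,0)$, take $(\hat x,\hat y)$ as an (approximate) projection of $(x,0)$ onto $\gph F$, then invoke Theorem \ref{ThExistRegDeriv} at $(\hat x,\hat y)$ and finish with the norm estimate $\norm{(Id\,\vdots\,C)}_F^2\leq n(1+\kappa^2)$. The one small variation is in how you get persistence of strong metric regularity: you cite the general stability theorem from \cite{DoRo14}, which forces a slight enlargement of the modulus from $\kappa_0$ to some $\kappa>\kappa_0$, whereas the paper avoids any enlargement by the more elementary observation that the \emph{same} single-valued Lipschitz localization $s$ of $F^{-1}$ around $(0,\xb)$ still serves as a localization around $(\hat y,\hat x)$ once $(\hat x,\hat y)\in\gph F$ is close enough to $(\xb,0)$, hence $F$ is strongly metrically regular there with the \emph{same} modulus $\kappa_0$. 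Also, the paper works with an arbitrary approximation factor $\beta\geq 1$ (yielding $L=1+\beta$), so the conclusion holds for the approximate projections actually used in the algorithm and not only the exact nearest-point projection you chose (your $L=2$ is the case $\beta=1$); this generality is not needed for the theorem as stated but is closer to the spirit of Remark \ref{RemApprProjection}. Neither difference affects correctness.
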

\begin{proof}
Let $s$ denote the single-valued Lipschitzian localization of $F^{-1}$ around $(0,\xb)$ and let $\kappa$
denote its Lipschitz constant. We claim that for every $\beta\geq 1$ the set
$\G_{F,\xb}^{1+\beta,\sqrt{n(1+\kappa^2)}}(x)\not=\emptyset$ for every $x$ sufficiently close to $\xb$.
Obviously there is a real $\rho>0$ such that $s$ is a single-valued localization of $F^{-1}$ around
$(\hat y,\hat x)$ for every $(\hat x,\hat y)\in\gph F\cap \B_\rho(\xb,0)$ and, since $s$ is Lipschitzian
with modulus $\kappa$, we obtain that $F$ is strongly metrically regular around $(\hat x,\hat y)$ with
modulus $\kappa$. Consider now $x\in \B_{\rho'}(\xb)$ where $\rho'<\rho/(1+\beta)$ and $(\hat x,\hat
y)\in\gph F$ satisfying $\norm{(\hat x-x,\hat y)}\leq \beta \dist{(x,0),\gph F}\leq \beta\norm{x-\xb}$.
Then $\norm{\hat x-\xb,\hat y-0}\leq
\beta\norm{x-\xb}+\norm{(x-\xb,0)}=(1+\beta)\norm{x-\xb}< \rho$ and by Theorem
\ref{ThExistRegDeriv} there is some matric $C$ with $\norm{C}\leq\kappa$ such that $(Id,C)\in {\cal
A}_{\rm reg}F(\hat x,\hat y)$. Since $\norm{(Id\,\vdots\,C)}_F^2=n+\norm{C}_F^2\leq n(1+\norm{C}^2)$, we
obtain $(\hat x,\hat y,Id,C)\in
\G_{F,\xb}^{1+\beta,\sqrt{n(1+\kappa^2)}}(x)\not=\emptyset$.
\end{proof}
To achieve superlinear convergence of the \ssstar Newton method, the conditions of Theorem
\ref{ThConvSemiSmooth2} need not be fulfilled. We now introduce a  generalization of  the concept
of semismoothness${}^*$  which enables us to deal with mappings $F$ that are not \ssstar  at $(\xb,0)$
with respect to the directional limiting coderivative in the sense of Definition \ref{DefSemiSmooth}. Our
approach is motivated by the characterization of semismoothness${}^*$ in Corollary
\ref{CorCharSemiSmooth}. In order to achieve superlinear convergence of Algorithm \ref{AlgNewton}, from
the above analysis it is clear that, in fact,
condition \eqref{EqCharSemiSmoothReg} need not to hold for all $(x,y)\in\gph F\cap
\B_\delta(\xb,0)$ and all elements $(y^*,x^*)\in \gph \widehat D^*F(x,y)$, but only for those
points and those elements from the graph of the regular coderivative which we actually use in the
algorithm. Further, there is no reason to restrict ourselves to (regular) coderivatives, we possibly can
use other objects which are easier to compute.

In order to formalize these ideas we introduce the mapping $\widehat{\cal D}^*F:\gph F\to (\R^n\tto\R^n)$
having the property that for every pair $(x,y)\in\gph F$ the set $\gph \widehat {\cal D}^*F(x,y)$ is a
cone. Further we define the associated limiting mapping ${\cal D}^*F:\gph F\to (\R^n\tto\R^n)$ via
\[ \gph {\cal D}^*F(x,y)=\limsup_{(x',y')\longsetto{{\gph F}}(x,y)}\gph  \widehat {\cal
D}^*F(x',y').\]

\begin{definition}\label{DefSemiSmoothGen}
  The mapping $F:\R^n\tto\R^n$ is called \ssstar at $(\xb,\yb)\in\gph F$ with respect to ${\cal
  D^*}F$ if for every $\epsilon>0$ there is some $\delta>0$ such that
  \begin{equation}\label{EqDefSemiSmoothReg}
\vert \skalp{x^*,x-\xb}-\skalp{y^*,y-\yb}\vert\leq \epsilon \norm{(x,y)-(\xb,\yb)}\norm{(x^*,y^*)}\
\forall(x,y)\in \B_\delta(\xb,\yb)\ \forall (y^*,x^*)\in\gph \widehat {\cal
D}^*F(x,y).\end{equation}
\end{definition}
Given $(x,y)\in\gph F$ we denote by ${\cal A}^{{\cal D}^*}F(x,y)$ the collection of all pairs of $n\times
n$ matrices $(A,B)$, such that there are $n$ elements $(v_i^*,u_i^*)\in \gph {\cal D^*}F(x,y)$,
$i=1,\ldots, n$, and the $i$-th row of $A$ and $B$ are ${u_i^*}^T$ and ${v_i^*}^T$, respectively. Further
we denote
\[{\cal A}_{\rm reg}^{{\cal D}^*}F(x,y):=\{(A,B)\in {\cal A}^{{\cal D}^*}F(x,y)\mv A\mbox{
regular}\}.\] Now we can generalize the previous results by replacing ${\cal A}_{\rm reg}F$ by
${\cal A}_{\rm reg}^{{\cal D}^*}F$.

\begin{algorithm}[Generalized \ssstar Newton-like method for generalized
equations]\label{AlgGenNewton}\mbox{ }\\
 1. Choose a starting point $x^{(0)}$, set the iteration counter $k:=0$.\\
 2. If $0\in F(x^{(k)})$ stop the algorithm.\\
  3. Compute $(\hat x^{(k)},\hat y^{(k)})\in\gph F$ close to $(x^{(k)},0)$ such that ${\cal
  A}_{\rm reg}^{{\cal D}^*}F(\hat x^{(k)},\hat y^{(k)})\not=\emptyset$.\\
  4. Select $(A,B)\in {\cal A}_{\rm reg}^{{\cal D}^*}F(\hat x^{(k)},\hat y^{(k)})$ and compute the
  new iterate $x^{(k+1)}=\hat x^{(k)}-A^{-1}B\hat y^{(k)}.$\\
  5. Set $k:=k+1$ and go to 2.
\end{algorithm}

Given two reals $L,\kappa>0$ and a solution $\xb$ of \eqref{EqGE}, we denote
\[\G_{F,\xb,{\cal D}^*}^{L,\kappa}(x):=\{(\hat x,\hat y,A,B)\mv \norm{(\hat x-\xb,\hat y)}\leq
L\norm{x-\xb},\ (A,B)\in {\cal A}_{\rm reg}^{{\cal D}^*}F(\hat x,\hat y),
\norm{A^{-1}}\norm{(A\,\vdots\,B)}_F\leq\kappa\}.\]
\begin{theorem}\label{ThConvGenSemiSmmooth1}
  Assume that $F$ is \ssstar at $(\xb,0)\in\gph F$ with respect to ${\cal D}^*F$ and assume that
  there are $L,\kappa>0$ such that for every $x\not\in F^{-1}(0)$ sufficiently close to $\xb$ we
  have $\G_{F,\xb,{\cal D}^*}^{L,\kappa}(x)\not=\emptyset$. Then there exists some $\delta>0$ such
  that for every starting point $x^{(0)}\in\B_\delta(\xb)$ Algorithm \ref{AlgGenNewton} either
  stops after
  finitely many iterations at a solution or produces a sequence $x^{(k)}$ which converges
  superlinearly to $\xb$, provided we choose in every iteration $(\hat x^{(k)},\hat y^{(k)},A,B)\in
  \G_{F,\xb,{\cal D}^*}^{L,\kappa}(x^{(k)})$.
\end{theorem}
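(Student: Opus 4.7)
The plan is to mirror the proof of Theorem \ref{ThConvSemiSmmooth1}, adapting each step so that it operates with the generalized coderivative framework instead of the standard one. The crux is to establish the following analogue of Proposition \ref{PropConv}: for every $\epsilon>0$ there is some $\delta>0$ such that for every $(x,y)\in \gph F\cap \B_\delta(\xb,0)$ and every $(A,B)\in {\cal A}^{{\cal D}^*}_{\rm reg}F(x,y)$ one has
\[
\norm{(x-A^{-1}By)-\xb}\leq \epsilon \norm{A^{-1}}\norm{(A\,\vdots\,B)}_F \norm{(x,y)-(\xb,0)}.
\]
Once this generalized Newton-step estimate is in place, the remainder of the argument is identical to that of Theorem \ref{ThConvSemiSmmooth1}: choose $\bar\delta$ so that the estimate holds with $\epsilon:=1/(2L\kappa)$, set $\delta:=\bar\delta/L$, and for any iterate $x^{(k)}\in\B_\delta(\xb)\setminus F^{-1}(0)$ use $\G_{F,\xb,{\cal D}^*}^{L,\kappa}(x^{(k)})\neq\emptyset$ to produce $(\hat x^{(k)},\hat y^{(k)},A,B)$ satisfying $\norm{(\hat x^{(k)}-\xb,\hat y^{(k)})}\leq L\norm{x^{(k)}-\xb}\leq \bar\delta$; combining with the above estimate yields $\norm{x^{(k+1)}-\xb}\leq \tfrac 12\norm{x^{(k)}-\xb}$.

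The main technical step is transferring the semismoothness$^*$ estimate \eqref{EqDefSemiSmoothReg}, phrased in terms of $\widehat{\cal D}^*F$, to a corresponding estimate in terms of the limiting mapping ${\cal D}^*F$ actually used in the definition of ${\cal A}^{{\cal D}^*}F$. Given $\epsilon>0$, I would first apply Definition \ref{DefSemiSmoothGen} to obtain some $\delta'>0$ such that \eqref{EqDefSemiSmoothReg} holds. Then for any $(x,y)\in \gph F$ sufficiently close to $(\xb,0)$ and any $(y^*,x^*)\in \gph {\cal D}^*F(x,y)$, the outer-limit definition of ${\cal D}^*F$ supplies sequences $(x_k,y_k)\longsetto{\gph F}(x,y)$ and $(y_k^*,x_k^*)\to(y^*,x^*)$ with $(y_k^*,x_k^*)\in \gph \widehat{\cal D}^*F(x_k,y_k)$. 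Eventually $(x_k,y_k)\in \B_{\delta'}(\xb,0)$, so applying \eqref{EqDefSemiSmoothReg} at each $(x_k,y_k)$ and letting $k\to\infty$ yields, by continuity of the inner product and of the norm,
\[
|\skalp{x^*,x-\xb}-\skalp{y^*,y}|\leq \epsilon\norm{(x,y)-(\xb,0)}\norm{(x^*,y^*)}.
\]

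With the estimate available for all $(y^*,x^*)\in \gph {\cal D}^*F(x,y)$, the remaining steps of the adapted Proposition \ref{PropConv} follow exactly as in the original proof: the $i$-th component of $A(x-\xb)-By$ equals $\skalp{u_i^*,x-\xb}-\skalp{v_i^*,y}$ for the pairs $(v_i^*,u_i^*)\in \gph {\cal D}^*F(x,y)$ defining the rows of $A$ and $B$; summing the squared row estimates yields the Frobenius norm of $(A\,\vdots\,B)$; and the bound is transferred to $(x-A^{-1}By)-\xb = A^{-1}\bigl(A(x-\xb)-By\bigr)$ by multiplying with $A^{-1}$. Superlinear convergence then follows from the original argument: since the transferred estimate can be made to hold with arbitrarily small $\epsilon$ on neighborhoods shrinking around $\xb$, the ratio $\norm{x^{(k+1)}-\xb}/\norm{x^{(k)}-\xb}$ tends to $0$.

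The only delicate point I anticipate is the limit passage from $\widehat{\cal D}^*F$ to ${\cal D}^*F$; however, since \eqref{EqDefSemiSmoothReg} is a genuinely bilinear estimate with explicit constant on the right-hand side and the defining sequence $(y_k^*,x_k^*)$ converges in $\R^{2n}$, no extra uniform boundedness on coderivative sequences is needed and the argument goes through without additional assumptions. The rest of the proof is essentially bookkeeping on top of the already verified Theorem \ref{ThConvSemiSmmooth1}.
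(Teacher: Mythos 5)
Your proposal is correct and matches the approach the paper intends: the paper itself only remarks that the proof runs parallel to that of Theorem~\ref{ThConvSemiSmmooth1}, and you fill in the one genuinely new step, namely the passage of the estimate in Definition~\ref{DefSemiSmoothGen} from $\widehat{\cal D}^*F$ to its outer limit ${\cal D}^*F$ by the sequential argument, which is precisely the analogue of the (ii)$\Leftrightarrow$(iii) equivalence in Proposition~\ref{PropCharSemiSmoothSets}. Your observation that the bilinear bound with explicit constant passes cleanly to the limit of $(x_k,y_k,x_k^*,y_k^*)$ without any extra compactness hypothesis is exactly right, and the rest of the argument is verbatim Proposition~\ref{PropConv} and Theorem~\ref{ThConvSemiSmmooth1}.
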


The proof can be conducted along the same lines as the proof of Theorem \ref{ThConvSemiSmmooth1}. 

\section{Solving generalized equations}
We will now illustrate this generalized method by means of a frequently arising class of problems. We
want to apply Algorithm \ref{AlgGenNewton} to the GE
\begin{equation}\label{EqGEAppl}
  0\in f(x)+ \nabla g(x)^T N_D\big(g(x)\big),
\end{equation}
where $f:\R^n\to\R^n$ is continuously differentiable, $g:\R^n\to\R^s$ is twice continuously
differentiable and $D\subseteq\R^s$ is a convex polyhedral set. Denoting $\Gamma:=\{x\in\R^n\mv g(x)\in
D\}$, we conclude $\nabla g(x)^T N_D\big(g(x)\big)\subseteq \widehat N_\Gamma(x)\subseteq N_\Gamma(x)$,
cf. \cite[Theorem 6.14]{RoWe98}. If in addition some constraint qualification is fulfilled, we also have
$N_\Gamma(x)=\widehat N_\Gamma(x)= \nabla g(x)^TN_D(g(x))$ and in this case
\eqref{EqGEAppl} is equivalent to the GE
\begin{equation}\label{EqFirstOrderOptim}0\in f(x)+\widehat N_\Gamma(x).\end{equation}

Unfortunately, in many situations we cannot apply Algorithm \ref{AlgNewton} directly to the GE
\eqref{EqGEAppl} since this would require to find some $\hat x\in g^{-1}(D)$ close to a given $x$
such that $\dist{0,f(\hat x)+\nabla g(\hat x)^T N_D\big(g(\hat x)\big)}$ is small. This subproblem seems
to be of the same difficulty as the original problem.

A widespread approach is to introduce multipliers and to consider, e.g., the problem
\begin{eqnarray}
\label{EqGEApplJos}\myvec{0\\0}&\in&\tilde F(x,\lambda):=\myvec{f(x)+\nabla
g(x)^T\lambda\\(g(x),\lambda\big)}-\{0\}\times \gph N_D.
\end{eqnarray}
We suggest here another equivalent reformulation
\begin{equation}
  \label{EqGEApplReform}\myvec{0\\0}\in F(x,d):=\myvec{f(x)+\nabla g(x)^TN_D(d)\\g(x)-d}
\end{equation}
which avoids the introduction of multipliers as problem variables. Obviously, $\xb$ solves
\eqref{EqGEAppl} if and only if $(\xb,g(\xb))$ solves \eqref{EqGEApplReform}.

In what follows we define for every $\lambda\in\R^s$ the {\em Lagrangian} $\Lag_\lambda:\R^n\to\R^n$ by
\[\Lag_\lambda(x):=f(x)+\nabla g(x)^T\lambda.\]

Next let us consider the regular coderivative of $F$ at some point $\hat z:=((\hat x,\hat d), (\hat
p^*,g(\hat x)-\hat d))\in\gph F$ and choose any $\hat\lambda\in N_D(\hat d)$ with $\hat p^*=\Lag_{\hat
\lambda}(\hat x)$. If $(x^*,d^*)\in\widehat D^*F(\hat z)(p,q^*)$, we have
\begin{eqnarray*}
  0&\geq&\limsup_{((x,d),(p^*,g(x)-d))\longsetto{{\gph F}}\hat z}\frac{\skalp{x^*,x-\hat
  x}+\skalp{d^*,d-\hat d}-\skalp{p,p^*-\hat p^*}-\skalp{q^*,g(x)-d-(g(\hat x)-\hat
  d)}}{\norm{(x-\hat x,d-\hat d,p^*-\hat p^*,g(x)-d-(g(\hat x)-\hat d))}}\\
  &\geq&\limsup_{\AT{x\to\hat x}{(d,\lambda)\longsetto{{\gph N_D}}(\hat
  d,\hat\lambda)}}\frac{\skalp{x^*,x-\hat x}+\skalp{d^*,d-\hat
  d}-\skalp{p,\Lag_\lambda(x)-\Lag_{\hat \lambda}(\hat x)}-\skalp{q^*,g(x)-d-(g(\hat x)-\hat
  d)}}{\norm{(x-\hat x,d-\hat d,\Lag_\lambda(x)-\Lag_{\hat \lambda}(\hat x),g(x)-d-(g(\hat x)-\hat
  d))}}\\
  &=&\limsup_{\AT{x\to\hat x}{(d,\lambda)\longsetto{{\gph N_D}}(\hat
  d,\hat\lambda)}}\frac{\skalp{x^*-\nabla \Lag_{\hat \lambda}(\hat x)^Tp-\nabla g(\hat
  x)^Tq^*,x-\hat x}+\skalp{d^*+q^*,d-\hat d} -\skalp{\nabla
  g(x)p,\lambda-\hat\lambda}}{\norm{(x-\hat x,d-\hat d,\Lag_\lambda(x)-\Lag_{\hat \lambda}(\hat
  x),g(x)-d-(g(\hat x)-\hat d))}}.
\end{eqnarray*}
 Fixing $(d,\lambda)=(\hat d,\hat\lambda)$, we obtain
\begin{eqnarray*}
0&\geq&\limsup_{x\to\hat x}\frac{\skalp{x^*-\nabla \Lag_{\hat\lambda}(\hat x)^Tp-\nabla g(\hat
x)^Tq^*,x-\hat x}}{\norm{(x-\hat x,0,\Lag_{\hat\lambda}(x)-\Lag_{\hat\lambda}(\hat x), g(x)-g(\hat
x))}}.
\end{eqnarray*}
By our differentiability assumption, $\Lag_{\hat\lambda}$ and $g$ are Lipschitzian near $\hat x$ and
therefore we have
\[x^*=\nabla \Lag_{\hat\lambda}(\hat x)^Tp+\nabla g(\hat x)^Tq^*.\]
Similarly, when fixing $x=\hat x$, we may conclude
\begin{eqnarray*}
  0&\geq&\limsup_{(d,\lambda)\longsetto{{\gph N_D}}(\hat d,\hat\lambda)}\frac{\skalp{d^*+q^*,d-\hat
  d} -\skalp{\nabla g(\hat x)p,\lambda-\hat\lambda}}{\norm{(0,d-\hat d,\nabla g(\hat
  x)^T(\lambda-\hat\lambda),d-\hat d)}}
\end{eqnarray*}
implying $d^*+q^*\in \widehat D^*N_D(\hat d,\hat\lambda)(\nabla g(\hat x)p)$. Thus we have shown the
inclusion
\begin{eqnarray}\label{EqInclRegCoderAppl}\widehat D^*F(\hat z)(p,q^*)&\subseteq& T(\hat x,\hat
d,\hat\lambda)(p,q^*)\\
\nonumber&:=&\big\{(\nabla \Lag_{\hat\lambda}(\hat x)^Tp+\nabla g(\hat x)^Tq^*,d^*)\mv d^*+q^*\in
\widehat D^*N_D(\hat d,\hat\lambda)(\nabla g(\hat x)p)\big\}.
\end{eqnarray}
It is clear from the existing theory on coderivatives that this inclusion is strict in general. In order
to proceed we introduce the following non-degeneracy condition.
\begin{definition}\label{DefNonDegen}
  We say that $(x,d)\in\R^n\times\R^s$ is   {\em non-degenerate with modulus $\gamma>0$} if
  \begin{equation}\label{EqNonDegen}
    \norm{\nabla g(x)^T\mu}\geq\gamma\norm{\mu}\ \forall \mu\in\Span N_D(d).
  \end{equation}
  We simply say that $(x,d)$ is non-degenerate if \eqref{EqNonDegen} holds with some modulus
  $\gamma>0$.
\end{definition}
\begin{remark}
  The point $(\hat x,\hat d)$ is non-degenerate if and only if $\ker \nabla g(\hat x)^T\cap\Span
  N_D(\hat d)=\{0\}$, which in turn is equivalent to $\nabla g(\hat x)\R^n+\lin T_D(\hat d)=\R^s$.
  Thus, $(\hat x,\hat d)$ is non-degenerate if and only if $\hat x$ is a non-degenerate point in
  the sense of \cite[Assumption (A2)]{MoOutRa16} of the mapping $g(x)-(g(\hat x)-\hat d)$ with
  respect to $D$.
  By  \cite[Equation (4.172)]{BonSh00}, this is also related to the  non-degenerate points in the
  sense of \cite[Definition 4.70]{BonSh00} without describing the $C^1$-reducibility of the set
  $D$.
\end{remark}
\begin{remark}
  It is not difficult to show that \eqref{EqInclRegCoderAppl} holds with equality if $(\hat x,\hat
  d)$ is non-degenerate. However, this property is not important for the subsequent analysis.
\end{remark}
\begin{lemma}\label{LemUniqueLambda}
  Consider $((\hat x,\hat d), (\hat p^*,g(\hat x)-\hat d))\in\gph F$ and assume that $(\hat x,\hat
  d)$ is non-degenerate. Then the system
  \begin{equation}\label{EqSystLambda}
   \hat p^*=f(\hat x)+\nabla g(\hat x)^T\lambda(=\Lag_\lambda(\hat x)),\ \lambda\in N_D(\hat d)
  \end{equation}
  has a unique solution denoted by $\hat\lambda(\hat x,\hat d, \hat p^*)$.
\end{lemma}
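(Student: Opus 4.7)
The plan is to split the claim into existence and uniqueness, both of which follow almost directly from the definitions once the non-degeneracy condition is brought into play.

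First I would establish existence. Since $((\hat x,\hat d), (\hat p^*,g(\hat x)-\hat d))\in\gph F$, the definition of $F$ in \eqref{EqGEApplReform} says exactly that $\hat p^*\in f(\hat x)+\nabla g(\hat x)^T N_D(\hat d)$. Hence there is at least one $\lambda\in N_D(\hat d)$ with $\hat p^*=f(\hat x)+\nabla g(\hat x)^T\lambda=\Lag_\lambda(\hat x)$, so \eqref{EqSystLambda} is solvable.

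For uniqueness, suppose $\lambda_1,\lambda_2\in N_D(\hat d)$ both solve \eqref{EqSystLambda}. Subtracting yields $\nabla g(\hat x)^T(\lambda_1-\lambda_2)=0$. The key observation is that $\lambda_1-\lambda_2\in N_D(\hat d)-N_D(\hat d)\subseteq \Span N_D(\hat d)$, so the non-degeneracy condition \eqref{EqNonDegen} with modulus $\gamma>0$ applies and gives $\gamma\norm{\lambda_1-\lambda_2}\leq \norm{\nabla g(\hat x)^T(\lambda_1-\lambda_2)}=0$. Therefore $\lambda_1=\lambda_2$, completing the argument.

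There is no real obstacle here; the only conceptual point is to notice that the difference of two elements of $N_D(\hat d)$ lies in $\Span N_D(\hat d)$ so that \eqref{EqNonDegen} is applicable. Everything else is immediate from the definition of $F$ and of non-degeneracy.
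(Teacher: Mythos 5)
Your proof is correct and follows essentially the same two-step argument as the paper: existence from the definition of $F$ in \eqref{EqGEApplReform}, and uniqueness by subtracting the two relations, observing that $\lambda_1-\lambda_2\in\Span N_D(\hat d)$, and invoking the non-degeneracy estimate \eqref{EqNonDegen}.
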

\begin{proof}
  By the definition of $F$, system \eqref{EqSystLambda} has at least one solution. Now let us
  assume that there are two distinct solutions $\lambda_1\not=\lambda_2$. Then
  $0\not=\lambda_1-\lambda_2\in\Span N_D(\hat d)$ and
  \[\nabla g(\hat x)^T(\lambda_1-\lambda_2)=f(\hat x)+\nabla g(\hat x)^T\lambda_1-(f(\hat x)+\nabla
  g(\hat x)^T\lambda_2)=\hat p^*-\hat p^*=0\]
  contradicting the non-degeneracy of $(\hat x,\hat d)$. Hence the solution to \eqref{EqSystLambda}
  is unique.
\end{proof}
We are now in the position to define the mapping $\widehat{\cal D}^*F$. Given some real $\hat\gamma>0$ we
define
\begin{align}\label{EqHatCalDF}&\widehat{\cal D}^*F(\hat z)(p,q^*)\\
\nonumber&:= \begin{cases}T(\hat x,\hat d,\hat\lambda(\hat x,\hat d,\hat p^*))(p,q^*)&\mbox{if
$(\hat x,\hat d)$ is non-degenerate with modulus $\hat \gamma$},\\
\{(0,0)\}&\mbox{if $(\hat x,\hat d)$ is not non-degenerate with modulus $\hat \gamma$ and
$(p,q^*)=(0,0)$,}\\
\emptyset&\mbox{otherwise}
\end{cases}\end{align}
for every $\hat z:=(\hat x,\hat d,\hat p^*,g(\hat x)-\hat d)\in \gph F$ with $T$ given by
\eqref{EqInclRegCoderAppl}. We neglect in the notation the dependence on $\hat\gamma$ which will be
specified later. %
\begin{theorem}\label{ThSemiSmoothAppl}
The mapping $F$ is \ssstar with respect to   ${\cal D}^*F$ at every point $(\xb,g(\xb),0,0)$.
\end{theorem}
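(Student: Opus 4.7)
The plan is to verify the inequality \eqref{EqDefSemiSmoothReg} from Definition \ref{DefSemiSmoothGen} by contradiction. The case when $(\hat x,\hat d)$ fails to be non-degenerate with modulus $\hat\gamma$ is trivial, because then $\widehat{\mathcal D}^*F(\hat z)$ only contains the zero element paired with $(0,0)$, so the substance of the proof lies in the non-degenerate regime. Thus I assume that there exist $\epsilon_0>0$ and sequences $\hat z_k=((\hat x_k,\hat d_k),(\hat p_k^*,g(\hat x_k)-\hat d_k))\to \bar z:=(\bar x,g(\bar x),0,0)$ in $\gph F$ together with $((p_k,q_k^*),(x_k^*,d_k^*))\in \gph \widehat{\mathcal D}^*F(\hat z_k)$, normalized so that $\|(x_k^*,d_k^*,p_k,q_k^*)\|=1$, which violate the inequality by the factor $\epsilon_0$.

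For each $k$, Lemma \ref{LemUniqueLambda} produces the unique multiplier $\hat\lambda_k\in N_D(\hat d_k)$ with $\Lag_{\hat\lambda_k}(\hat x_k)=\hat p_k^*$, and the non-degeneracy bound $\hat\gamma\|\hat\lambda_k\|\le\|\hat p_k^*-f(\hat x_k)\|$ keeps $\hat\lambda_k$ bounded. Since $D$ is polyhedral, $\Span N_D(\hat d_k)$ takes only finitely many values in a neighborhood of $g(\bar x)$, so after passing to a subsequence I may assume that $\Span N_D(\hat d_k)$ equals a fixed subspace $V\subseteq\Span N_D(g(\bar x))$ and that $\hat\lambda_k\to\bar\lambda^*$ with $\bar\lambda^*\in V\cap N_D(g(\bar x))$ and $\Lag_{\bar\lambda^*}(\bar x)=0$. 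The main technical step, and the one I expect to be the real obstacle, is to upgrade this qualitative convergence to the quantitative estimate
\[
\hat\gamma\|\hat\lambda_k-\bar\lambda^*\|\le\|\nabla g(\hat x_k)^T(\hat\lambda_k-\bar\lambda^*)\|=\bigl\|\hat p_k^*-(f(\hat x_k)-f(\bar x))+(\nabla g(\bar x)-\nabla g(\hat x_k))^T\bar\lambda^*\bigr\|=O(\|\hat z_k-\bar z\|),
\]
which uses non-degeneracy precisely on the subspace $V$, the fact that $\hat\lambda_k-\bar\lambda^*\in V$, and the $C^1$-smoothness of $f,g$ together with $\Lag_{\bar\lambda^*}(\bar x)=0$.

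Next I will expand $\Delta_k := \langle x_k^*,\hat x_k-\bar x\rangle+\langle d_k^*,\hat d_k-g(\bar x)\rangle-\langle p_k,\hat p_k^*\rangle-\langle q_k^*,g(\hat x_k)-\hat d_k\rangle$ by plugging in $x_k^*=\nabla\Lag_{\hat\lambda_k}(\hat x_k)^Tp_k+\nabla g(\hat x_k)^Tq_k^*$ from \eqref{EqInclRegCoderAppl}, applying Taylor expansions of $\Lag_{\hat\lambda_k}$ and $g$ around $\bar x$ (with error constants uniform in $\hat\lambda_k$ since the latter is bounded), and substituting $\Lag_{\hat\lambda_k}(\bar x)=\nabla g(\bar x)^T(\hat\lambda_k-\bar\lambda^*)$. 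After the cancellations this telescopes to
\[
\Delta_k=\langle d_k^*+q_k^*,\hat d_k-g(\bar x)\rangle-\langle\nabla g(\hat x_k)p_k,\hat\lambda_k-\bar\lambda^*\rangle+r_k,
\]
with a remainder $|r_k|=O(\|\hat x_k-\bar x\|^2)+O(\|\hat x_k-\bar x\|)\|\hat\lambda_k-\bar\lambda^*\|=O(\|\hat z_k-\bar z\|^2)$ thanks to the key estimate and the normalization.

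Finally I will invoke the semismoothness$^*$ of $N_D$ at $(g(\bar x),\bar\lambda^*)$, which holds because $N_D$ is polyhedral and hence \ssstar at every point of its graph by the discussion following Proposition \ref{PropSSConSet}, and apply Corollary \ref{CorCharSemiSmooth} to $(\nabla g(\hat x_k)p_k,d_k^*+q_k^*)\in\gph\widehat D^*N_D(\hat d_k,\hat\lambda_k)$: given any $\epsilon>0$, for all large $k$ one has
\[
\bigl|\langle d_k^*+q_k^*,\hat d_k-g(\bar x)\rangle-\langle\nabla g(\hat x_k)p_k,\hat\lambda_k-\bar\lambda^*\rangle\bigr|\le\epsilon\,\|(\hat d_k-g(\bar x),\hat\lambda_k-\bar\lambda^*)\|\cdot\|(d_k^*+q_k^*,\nabla g(\hat x_k)p_k)\|.
\]
The first factor on the right is $O(\|\hat z_k-\bar z\|)$ by the key estimate, the second is $O(1)$ by the normalization, so choosing $\epsilon$ sufficiently small forces $|\Delta_k|<\epsilon_0\|\hat z_k-\bar z\|$ for all large $k$, contradicting the standing assumption and completing the proof.
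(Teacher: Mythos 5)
Your proof is correct and takes essentially the same route as the paper's: contraposition with a sequence violating \eqref{EqDefSemiSmoothReg}, extraction of the multipliers $\hat\lambda_k$ via Lemma \ref{LemUniqueLambda}, boundedness and subsequential convergence to a limit multiplier, a Taylor/telescoping reduction of $\Delta_k$ to an expression governed by $\widehat D^*N_D$, and a final contradiction with the \ssstar property of the polyhedral map $N_D$. The only cosmetic differences are that the paper invokes Lemma \ref{LemLinealityD} to identify $\Span N_D(\hat d_k)$ with $(\Span\F)^\perp$ for a fixed face $\F$ of $\K_D(g(\xb),\lb)$ (you instead appeal directly to finiteness of the normal cones of a polyhedral set near $g(\xb)$, which serves the same purpose), and you normalize $\|(x_k^*,d_k^*,p_k,q_k^*)\|=1$ and state the quantitative estimate $\|\hat\lambda_k-\bar\lambda^*\|=O(\|\hat z_k-\bar z\|)$ directly, whereas the paper carries an equivalent bound in the form $\norm{(x_k-\xb,d_k-g(\xb),\Lag_{\lambda_k}(x_k))}\ge \min\{\hat\gamma/(L+1),1\}\norm{(d_k-g(\xb),\lambda_k-\lb)}$; both routes rest on the same use of non-degeneracy over $\Span N_D(\hat d_k)$.
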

\begin{proof}
  By contraposition. Assume on the contrary that there is a solution $(\xb,g(\xb))$ to
  \eqref{EqGEApplReform} together with $\epsilon>0$ and sequences
  $((x_k,d_k),(p_k^*,g(x_k)-d_k))\longsetto{{\gph F}}((\xb,g(\xb)),(0,0))$, $(x_k^*,d_k^*, p_k,
  q_k^*)\in \gph \widehat{\cal D}^*F((x_k,d_k),(p_k^*,g(x_k)-d_k))$ such that
  \begin{align}\label{EqAux1}\vert\skalp{x_k^*,x_k-\xb}+\skalp{d_k^*,d_k-g(\xb)}-&\skalp{p_k,
  p_k^*} -\skalp{q_k^*,g(x_k)-d_k}\vert\\
  \nonumber&>\epsilon\norm{(x_k-\xb,d_k-g(\xb),p_k^*, g(x_k)-d_k)}\norm{(x_k^*,d_k^*,p_k,q_k^*)} \
  \forall k.
  \end{align}
  We may conclude that $(x_k^*,d_k^*,p_k,q_k^*)\not=(0,0,0,0)$ and consequently $(x_k,d_k)$ is
  non-degenerate with
  modulus $\hat\gamma$. It follows that the sequence $\lambda_k:=\hat\lambda(x_k,d_k,p_k^*)$
  defined by Lemma \ref{LemUniqueLambda} fulfills
  \[\hat \gamma\norm{\lambda_k}\leq \norm{\nabla g(x_k)^T\lambda_k}=\norm{p_k^*-f(x_k)}\]
  and hence it is bounded. By possibly passing to a subsequence we can assume that $\lambda_k$
  converges to some $\lb$. It is easy to see that $\lb\in N_D(g(\xb))$ and $\Lag_{\lb}(\xb)=0$
  and by the definition of $\widehat{\cal D}^*F$ we obtain from \eqref{EqAux1}
  \begin{align*}
    &\vert\skalp{x_k^*,x_k-\xb}+\skalp{d_k^*,d_k-g(\xb)}-\skalp{p_k, p_k^*}
    -\skalp{q_k^*,g(x_k)-d_k}\vert\\
    &=\vert\skalp{\nabla\Lag_{\lambda_k}(x_k)^Tp_k+\nabla g(x_k)^Tq_k^*,
    x_k-\xb}+\skalp{d_k^*+q_k^*, d_k-g(\xb)}-\skalp{q_k^*,g(x_k)-g(\xb)}\\
    &\qquad-\skalp{p_k,\Lag_{\lambda_k}(x_k)-\Lag_{\lb}(\xb)}\vert\\
    &=\vert\skalp{p_k,\Lag_{\lambda_k}(\xb)-\Lag_{\lambda_k}(x_k)+\nabla\Lag_{\lambda_k}(x_k)(x_k-\xb)+(\nabla
    g(x_k)-\nabla g(\xb))^T(\lambda_k-\lb)-\nabla g(x_k)^T(\lambda_k-\lb)}\\
    &\qquad+\skalp{q_k^*,g(\xb)-g(x_k)+\nabla
    g(x_k)(x_k-\xb)}+\skalp{d_k^*+q_k^*,d_k-g(\xb)}\vert\\
    &>\epsilon\norm{(x_k-\xb,d_k-g(\xb),\Lag_{\lambda_k}(x_k),
    g(x_k)-d_k)}\norm{(x_k^*,d_k^*,p_k,q_k^*)}\\
    &\geq \epsilon\norm{(x_k-\xb,d_k-g(\xb),\Lag_{\lambda_k}(x_k))}\norm{(d_k^*,p_k,q_k^*)}.
  \end{align*}
  For all $k$ sufficiently large we have
  \begin{align*}
    &\vert\skalp{p_k,\Lag_{\lambda_k}(\xb)-\Lag_{\lambda_k}(x_k)+\nabla\Lag_{\lambda_k}(x_k)(x_k-\xb)+(\nabla
    g(x_k)-\nabla g(\xb))^T(\lambda_k-\lb)}\\
    &\qquad+\skalp{q_k^*,g(\xb)-g(x_k)+\nabla g(x_k)(x_k-\xb)}\vert\\
    &\leq \frac{\epsilon}2\norm{x_k-\xb}\norm{(p_k,q_k^*)}\leq \frac
    \epsilon2\norm{(x_k-\xb,d_k-g(\xb),\Lag_{\lambda_k}(x_k))}\norm{(d_k^*,p_k,q_k^*)}
  \end{align*}
  implying
  \begin{equation}\label{EqAux2}
    \vert \skalp{d_k^*+q_k^*,d_k-g(\xb)} -\skalp{\nabla g(x_k)p_k,\lambda_k-\lb}\vert> \frac
    \epsilon2\norm{(x_k-\xb,d_k-g(\xb),\Lag_{\lambda_k}(x_k))}\norm{(d_k^*,p_k,q_k^*)}.
  \end{equation}
  Next observe that
  \[\norm{\Lag_{\lambda_k}(x_k)}=\norm{\Lag_{\lambda_k}(x_k)-\Lag_{\lb}(\xb)}=\norm{\nabla
  g(x_k)^T(\lambda_k-\lb)+\Lag_{\lb}(x_k)-\Lag_{\lb}(\xb)}\]
  and let $L>0$ denote some real such that $\norm{\Lag_{\lb}(x_k)-\Lag_{\lb}(\xb)}\leq
  L\norm{x_k-\xb}$ $\forall k$.
  If $\norm{x_k-\xb}<\norm{\nabla g(x_k)^T(\lambda_k-\lb)}/(L+1)$ then we have
  \[\norm{\Lag_{\lambda_k}(x_k)}\geq \norm{\nabla
  g(x_k)^T(\lambda_k-\lb)}-\norm{\Lag_{\lb}(x_k)-\Lag_{\lb}(\xb)}>
  \norm{\nabla g(x_k)^T(\lambda_k-\lb)}/(L+1)\]
  implying
  \[\norm{(x_k-\xb, \Lag_{\lambda_k}(x_k))}\geq \norm{\nabla g(x_k)^T(\lambda_k-\lb)}/(L+1).\]
  Obviously this inequality holds as well when  $\norm{x_k-\xb}\geq\norm{\nabla
  g(x_k)^T(\lambda_k-\lb)}/(L+1)$. Further, by Lemma \ref{LemLinealityD} for every $k$ sufficiently
  large there is a face $\F_k$ of $\K_D(g(\xb),\lb)$ with $\Span N_D(d_k)=(\Span \F_k)^\perp$ and,
  since a convex polyhedral set has only finitely many faces, by possibly passing to a subsequence,
  we may assume that $\F_k= \F$ $\forall k$. Then $\lb=\lim_{k\to\infty}\lambda_k\in(\Span
  \F)^\perp$  and consequently  $\lambda_k-\lb\in (\Span \F)^\perp=\Span N_D(d_k)$. This yields
  $\norm{\nabla g(x_k)^T(\lambda_k-\lb)}\geq \hat\gamma\norm{\lambda_k-\lb}$ by non-degeneracy of
  $(x_k,d_k)$ and we obtain the inequality
  \[\norm{(x_k-\xb,d_k-g(\xb),\Lag_{\lambda_k}(x_k))}\geq
  \min\{\frac{\hat\gamma}{L+1},1\}\norm{(d_k-g(\xb),\lambda_k-\lb)}.\]
  Now let us choose some upper bound $C\geq1$ for the bounded sequence $\norm{\nabla g(x_k)}$ in
  order to obtain
  \[\norm{(d_k^*,p_k,q_k^*)}\geq\frac{\norm{(d_k^*+q_k^*,p_k)}}{\sqrt 2}\geq
  \frac{\norm{(d_k^*+q_k^*,\nabla g(x_k)p_k)}}{C\sqrt 2}.\]
  Thus we derive from \eqref{EqAux2}
  \[\vert \skalp{d_k^*+q_k^*,d_k-g(\xb)} -\skalp{\nabla
  g(x_k)p_k,\lambda_k-\lb}\vert>\frac{\epsilon}{2\sqrt 2 C}
  \min\{\frac{\hat\gamma}{L+1},1\}\norm{d_k-g(\xb),\lambda_k-\lb}\norm{(d_k^*+q_k^*,\nabla
  g(x_k)p_k)}\]
  showing, together with $d_k^*+q_k^*\in\widehat D^*N_D(d_k,\lambda_k)(\nabla g(x_k)p_k)$, that the
  mapping $N_D$ is not \ssstar at $(g(\xb),\lb)$. This contradicts our result from Section \ref{SecSS}
  and the theorem is proven.
\end{proof}
Note that the mapping $F$ will in general not be \ssstar in the sense of Definition
\ref{DefSemiSmooth} at a solution $(\xb,g(\xb))$ to \eqref{EqGEApplReform}, provided $(\xb,g(\xb))$
is not non-degenerate.

It is quite surprising that no constraint qualification is required in Theorem
\ref{ThSemiSmoothAppl}. In fact, there is a constraint qualification hidden in our assumption
because usually we are interested in solutions of \eqref{EqFirstOrderOptim} and here we assume that even a solution to \eqref{EqGEApplReform} is given. Based on Theorem \ref{ThSemiSmoothAppl}, in a
forthcoming paper we will present a locally superlinearly convergent Newton-type algorithm which does not
require, apart from  the solvability of \eqref{EqGEAppl}, any other constraint qualification. In this paper
we want just to demonstrate the basic principles how the approximation step  and the Newton step can be
performed. Therefore, for the ease of presentation,  in the remainder of this section we will impose
\begin{assumption}\label{AssNonDegen}
  $(\xb,g(\xb))$ is a non-degenerate solution to \eqref{EqGEApplReform} with modulus $\bar\gamma$.
\end{assumption}
In the following lemma we summarize two easy consequences of Assumption \ref{AssNonDegen}. Recall that a
mapping $G:\R^n\tto\R^m$ is {\em metrically regular} around $(\xb,\yb)\in\gph G$ if there are
neighborhoods $U$ of $\xb$ and $V$ of $\yb$ along with a positive real $\kappa$ such that
\[\dist{x,G^{-1}(y)}\leq \kappa\dist{y,G(x)}\ \forall (x,y)\in U\times V.\]
\begin{lemma}\label{LemPropNonDegen}
  Assume that Assumption \ref{AssNonDegen} is fulfilled. Then there is a neighborhood $W$ of
  $(\xb,g(\xb))$  such that  all points $(x,d)\in W$  are
  non-degenerate with modulus $\bar\gamma/2$. Further, the mapping $x\tto g(x)-D$ is metrically
  regular around $(\xb,0)$ and the mapping $u\tto g(\xb)+\nabla g(\xb)u-D$ is metrically regular
  around $(0,0)$.
\end{lemma}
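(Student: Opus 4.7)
The plan is to verify the three assertions in sequence, each reducing to a continuity argument or a standard regularity criterion under Assumption \ref{AssNonDegen}. I would start with the inclusion $\Span N_D(d) \subseteq \Span N_D(g(\xb))$ for every $d$ near $g(\xb)$, which holds because $D$ is polyhedral: for $d \in D$ close to $g(\xb)$ the assertion follows from Lemma \ref{LemLinealityD} applied with $\bar\lambda = 0$, since the face $\F$ appearing there is then a face of $T_D(g(\xb))$, necessarily contains $\lin T_D(g(\xb))$, and therefore satisfies $(\Span \F)^\perp \subseteq (\lin T_D(g(\xb)))^\perp = \Span N_D(g(\xb))$; for $d \notin D$ the condition is vacuous since $N_D(d) = \emptyset$. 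With this inclusion in hand, the non-degeneracy at $(\xb, g(\xb))$ combined with continuity of $\nabla g$ yields, for every $(x,d) \in W$ and every $\mu \in \Span N_D(d)$,
\[\|\nabla g(x)^T \mu\| \geq \|\nabla g(\xb)^T \mu\| - \|\nabla g(x) - \nabla g(\xb)\|\,\|\mu\| \geq \tfrac{\bar\gamma}{2}\|\mu\|,\]
provided $W$ is shrunk so that $\|\nabla g(x) - \nabla g(\xb)\| \leq \bar\gamma/2$; this establishes the first assertion.

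For the second assertion, I would invoke the Mordukhovich criterion for metric regularity. A brief computation (using that $G(x) := g(x) - D$ is the sum of the smooth map $g$ and a constant set-valued part) reduces the criterion to the implication
\[\mu \in N_D(g(\xb)),\ \nabla g(\xb)^T \mu = 0 \ \Longrightarrow\ \mu = 0,\]
which follows from Assumption \ref{AssNonDegen} since $N_D(g(\xb)) \subseteq \Span N_D(g(\xb))$.

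For the third assertion, the mapping $\tilde G(u) := g(\xb) + \nabla g(\xb) u - D$ has convex graph, so metric regularity around $(0,0)$ follows from the Robinson--Ursescu theorem or, equivalently, from the Mordukhovich criterion applied to $\tilde G$; either way the qualifying condition reduces to exactly the same implication as in the second assertion, and is again supplied by non-degeneracy.

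The main obstacle is the span inclusion in the first assertion: relating $\Span N_D(d)$ at nearby points to $\Span N_D(g(\xb))$ is the only place where the polyhedral structure of $D$ genuinely enters. Once that is in hand, the first claim is a routine $\varepsilon$-argument using continuity of $\nabla g$, and the remaining two claims are direct consequences of well-known regularity criteria.
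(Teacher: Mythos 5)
Your proof is correct, and the second and third assertions are handled exactly as in the paper (both reduce to the implication $\mu\in N_D(g(\xb)),\ \nabla g(\xb)^T\mu=0\Rightarrow\mu=0$, which the paper cites via \cite[Example 9.44]{RoWe98} and you obtain via the equivalent Mordukhovich criterion, respectively Robinson--Ursescu for the convex-graph mapping). For the first assertion you take a slightly different route: rather than arguing by contraposition with a convergent subsequence as the paper does, you establish the inclusion $\Span N_D(d)\subseteq\Span N_D(g(\xb))$ for $d$ near $g(\xb)$ and then combine it with the triangle inequality and the continuity of $\nabla g$ to get the explicit modulus $\bar\gamma/2$. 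This direct estimate is cleaner and actually makes it transparent why the modulus halves; the contradiction argument in the paper ultimately rests on the same inclusion. One minor remark: your derivation of the span inclusion via Lemma~\ref{LemLinealityD} (with $\bar\lambda=0$, so that $\K_D(g(\xb),0)=T_D(g(\xb))$, faces of which contain $\lin T_D(g(\xb))$) is correct but heavier than necessary. Since $D$ is polyhedral, one has $N_D(d)\subseteq N_D(g(\xb))$ for all $d\in D$ near $g(\xb)$ directly; this is in fact exactly the elementary fact the paper invokes inside its contraposition argument, and it immediately gives $\Span N_D(d)\subseteq\Span N_D(g(\xb))$ without recourse to the face decomposition of the critical cone.
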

\begin{proof}
  We show the first assertion by contraposition. Assume on the contrary that there are sequences
  $(x_k,d_k)\to (\xb,g(\xb))$ and $(\mu_k)$ such that $\mu_k\in \Span N_D(d_k)\cap \Sp_{\R^s}$ and
  $\norm{\nabla g(x_k)^T\mu_k}<\bar\gamma/2$ for all $k$. By possibly passing to a subsequence we
  can assume that $\mu_k$ converges to some $\bar \mu\in \Sp_{\R^s}$ satisfying $\norm{\nabla
  g(\xb)^T\bar\mu}\leq\bar\gamma/2$. Since $D$ is polyhedral we have $N_D(d_k)\subset N_D(g(\xb))$
  for all $k$ sufficiently large implying $\bar\mu\in\Span N_D(g(\xb))$, which contradicts our
  assumption on the modulus of non-degeneracy at $(\xb,g(\xb))$. In order to show the metric
  regularity property of the two mappings just note that Assumption \ref{AssNonDegen} implies
  \[\nabla g(\xb)^T\mu=0, \mu\in N_D(g(\xb))\subset \Span N_D(g(\xb))\ \Rightarrow \mu=0.\]
  Now the assertion follows from \cite[Example 9.44]{RoWe98}.
\end{proof}

We now want to specialize the approximation step and the Newton step for the GE
\eqref{EqGEApplReform}. In this case the approximation step can be performed as follows.
\begin{algorithm}[Approximation step]
  \label{AlgApprStep}\mbox{ }
  Input: $x\in\R^n$.
  \par\hangindent\parindent\noindent
  1. Compute a solution $\hat u$ of the strictly convex quadratic program
      \begin{eqnarray*}QP(x)\qquad\min_{u\in\R^n}&& \frac12 \norm{u}^2+f(x)^Tu\\
    \mbox{subject to}&&g(x)+\nabla g(x)u \in D
    \end{eqnarray*}
    together with an associated  multiplier $\hat\lambda\in N_D(g(x)+\nabla g(x)\hat u)$
    satisfying
    \begin{equation}\label{EqKKTQPApprStep}\hat u+f(x)+\nabla g(x)^T\hat\lambda=\hat
    u+\Lag_{\hat\lambda}(x)=0.\end{equation}
  \par\noindent
  2. Set $\hat x:=x$, $\hat d:=g(x)+\nabla g(x)\hat u$, $\hat p^*:=\Lag_{\hat\lambda}(\hat x)$,
  $\hat y:=(\hat p^*, g(\hat x)-\hat d)$.
\end{algorithm}
Obviously we have $\big((\hat x,\hat d),\hat y\big)\in \gph F$. In the following proposition we state
some properties of the output of Algorithm \ref{AlgApprStep} when the input $x$ is sufficiently close to
$\xb$. We denote by $\bar\lambda:=\hat\lambda(\xb,g(\xb),0)$ the unique multiplier  associated with the
non-degenerate solution $(\xb, g(\xb))$ of \eqref{EqGEApplReform}, cf.  Lemma \ref{LemUniqueLambda}.
\begin{proposition}\label{PropApprStep}
  Under Assumption \ref{AssNonDegen}  there is a positive radius $\rho$ and  positive reals
  $\beta,\beta_u$ and $\beta_\lambda$ such that  for all $x\in \B_\rho(\xb)$ the problem $QP(x)$
  has a unique solution and the output $\hat x,\hat d, \hat\lambda, \hat y$ and $\hat u$ of
  Algorithm \ref{AlgApprStep} fulfills
  \begin{gather}\label{EqUBound}\norm{\hat u}\leq \beta_u\norm{x-\xb}\\
  \label{EqApprStepBound}\norm{\big((\hat x,\hat d),\hat y\big)-\big((\xb,g(\xb)),(0,0)\big)}\leq
  \beta\norm{\hat u}\\
  \label{EqMultBound}\norm{\hat\lambda-\lb}\leq\beta_\lambda\norm{x-\xb}.\end{gather}
  Further, $(\hat x,\hat d)$ is non-degenerate with modulus $\bar\gamma/2$ and $N_D(\hat
  d)\subseteq N_D(g(\xb))$.
  \end{proposition}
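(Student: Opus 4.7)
My plan is to first establish solvability and uniqueness of $QP(x)$, then to derive Lipschitz stability of the primal--dual pair $(\hat u,\hat\lambda)$ from strong regularity of the KKT system, and finally to extract the remaining estimates by direct computation.

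For the solvability step I invoke Lemma~\ref{LemPropNonDegen} to obtain a neighborhood $W$ of $(\xb,g(\xb))$ on which non-degeneracy with modulus $\bar\gamma/2$ persists and on which the linearized constraint mapping $u\mapsto g(\xb)+\nabla g(\xb)u-D$ is metrically regular around $(0,0)$. Standard perturbation stability transfers metric regularity to $u\mapsto g(x)+\nabla g(x)u-D$ for $x$ near $\xb$; combined with $\dist{g(x),D}=O(\norm{x-\xb})$ this yields a feasible $\tilde u$ of size $O(\norm{x-\xb})$. Strict convexity of $\tfrac12\norm{u}^2+f(x)^Tu$ then produces the unique minimizer $\hat u$, and Lemma~\ref{LemUniqueLambda} makes the associated KKT multiplier $\hat\lambda\in N_D(\hat d)$ unique once $(\hat x,\hat d)\in W$.

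For the Lipschitz bounds I view the KKT system of $QP(x)$ as a parametric generalized equation in $(u,d,\lambda)$ parameterized by $x$, whose base solution at $x=\xb$ is $(0,g(\xb),\lb)$. Positive definiteness of the Hessian $Id$ makes the second-order sufficient condition hold trivially on every critical cone, while Assumption~\ref{AssNonDegen} plays the role of a constraint qualification. Together with the local face structure of $\gph N_D$ from Lemma~\ref{LemLinealityD}, these ingredients verify Robinson's strong regularity of the KKT map, and the implicit function theorem for strongly regular generalized equations yields a Lipschitzian single-valued localization $x\mapsto(\hat u(x),\hat\lambda(x))$ near $\xb$. This delivers \eqref{EqUBound} and \eqref{EqMultBound}, and it also supplies the reverse estimate $\norm{x-\xb}=O(\norm{\hat u})$ needed below.

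The remaining claims are routine. From \eqref{EqKKTQPApprStep} I read off $\hat p^*=-\hat u$, while the definition of $\hat d$ gives $g(\hat x)-\hat d=-\nabla g(x)\hat u$ and $\hat d-g(\xb)=(g(x)-g(\xb))+\nabla g(x)\hat u$. Each component is bounded by $O(\norm{\hat u})+O(\norm{x-\xb})$, and the reverse Lipschitz estimate converts both contributions into $O(\norm{\hat u})$, proving \eqref{EqApprStepBound}. Non-degeneracy of $(\hat x,\hat d)$ with modulus $\bar\gamma/2$ then follows by shrinking $\rho$ so that $(\hat x,\hat d)\in W$, and the inclusion $N_D(\hat d)\subseteq N_D(g(\xb))$ is the standard local fact for polyhedral sets: $\hat d$ lies in the relative interior of some face $\F$ of $D$ containing $g(\xb)$ in its closure, hence $N_D(\hat d)=(\Span\F)^\perp\cap N_D(g(\xb))\subseteq N_D(g(\xb))$. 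The hardest part is the verification of Robinson's strong regularity of the KKT system; non-degeneracy alone only produces the unique-multiplier property of Lemma~\ref{LemUniqueLambda}, and the bi-Lipschitz dependence of $(\hat u,\hat\lambda)$ on $x$, which powers both the Lipschitz bounds \eqref{EqUBound}--\eqref{EqMultBound} and, via the reverse estimate, the consolidation in \eqref{EqApprStepBound}, must be extracted by combining non-degeneracy with the positive definite $u$-Hessian and the polyhedral face structure from Lemma~\ref{LemLinealityD}.
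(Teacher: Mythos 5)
The architecture of your proof matches the paper's up to the point where you invoke Robinson strong regularity: Lemma~\ref{LemPropNonDegen} for feasibility and persistence of non-degeneracy, strict convexity for existence/uniqueness, and Lemma~\ref{LemUniqueLambda} for the multiplier. The paper, however, invokes only an \emph{isolated calmness} result (\cite[Theorem 6.2]{GfrMo19}) for the parametric GE \eqref{EqAuxGELinProbl} to obtain \eqref{EqUBound}, and then derives \eqref{EqMultBound} directly from the non-degeneracy inequality $\bar\gamma\norm{\hat\lambda-\lb}\leq\norm{\nabla g(\xb)^T(\hat\lambda-\lb)}$ together with \eqref{EqKKTQPApprStep}. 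Your use of strong regularity of the KKT map (SSOSC via the identity Hessian plus non-degeneracy as a constraint qualification) is plausible and would deliver \eqref{EqUBound} and \eqref{EqMultBound} at once, so that part of the reroute is fine.

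The gap is the claim that strong regularity ``also supplies the reverse estimate $\norm{x-\xb}=O(\norm{\hat u})$.'' Strong regularity gives a Lipschitz single-valued localization $x\mapsto(\hat u(x),\hat\lambda(x))$, i.e.\ a \emph{forward} Lipschitz bound; it does not make $\hat u(\cdot)$ bi-Lipschitz, and no such reverse estimate follows. Worse, the reverse estimate is generically false here: whenever $x$ near $\xb$ is itself a solution of \eqref{EqGEAppl}, the unique minimizer of $QP(x)$ is $\hat u=0$, so $\norm{\hat u}=0$ while $\norm{x-\xb}\not=0$ -- and Assumptions~\ref{AssNonDegen}--\ref{AssSecOrd} do \emph{not} exclude non-isolated solutions (the paper explicitly notes that metric regularity of $F$ is not implied). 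Consistent with this, the paper's own computation for \eqref{EqApprStepBound} in fact produces the bound $\norm{((\hat x,\hat d),\hat y)-((\xb,g(\xb)),0)}\leq\beta\norm{x-\xb}$ (with $\beta^2=1+L^2(1+\beta_u)^2+(1+L^2)\beta_u^2$), which -- given $\hat x=x$ -- is all one can get without a reverse estimate; the way \eqref{EqApprStepBound} is used in Proposition~\ref{PropCalG} it is anyway only combined with \eqref{EqUBound} into a single $\beta\beta_u\norm{x-\xb}$ bound. So you correctly noticed that the $\norm{\hat u}$-formulation of \eqref{EqApprStepBound} needs a reverse estimate, but your derivation of it from strong regularity does not work; drop that claim and state the bound on the right-hand side in terms of $\norm{x-\xb}$, which is what the downstream argument uses.
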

\begin{proof}
  Let $\tilde\Gamma(x):=\{u\mv \tilde g(x,u):=g(x)+\nabla g(x)u \in D\}$ denote the feasible region
  of the problem $QP(x)$. By Lemma \ref{LemPropNonDegen} the mapping $u\tto \tilde g(\xb,u)-D$ is
  metrically regular around $(0,0)$. Considering $x$ as a parameter and $u$ as the decision
  variable, by \cite[Corollary 3.7]{GfrMo17a}  the system $\tilde g(x,u)\in D$ has the so-called
  Robinson stability property at $(\xb,0)$ implying $\tilde \Gamma(x)\not=\emptyset$ for all $x$
  belonging to some neighborhood $U'$ of $\xb$. Thus the feasible region of the quadratic program
  $QP(x)$ is not empty and since the objective is strictly convex, for every $x\in U'$ the
  existence of a unique solution $\hat u$ follows. Obviously, $\hat u=0$ is the unique solution of
  $QP(\xb)$. Convexity of the quadratic program $QP(x)$ ensures that $\hat u$ is a solution if and
  only if the first-order optimality condition
  \begin{equation}\label{EqAuxGELinProbl}0\in \tilde f(x,\hat u)+\nabla_u \tilde g(x,\hat
  u)^TN_D(\tilde g(x,\hat u))\end{equation}
  with $\tilde f(x,u):=u+f(x)$ is fulfilled. Defining for every $\lambda\in\R^s$ the linear mapping
  $\tilde \F_\lambda:\R^n\to\R^n$ by $\tilde \F_\lambda v:=\nabla_u\tilde
  f(\xb,0)v+\nabla_u^2\skalp{\lambda^T\tilde g(\cdot)}(\xb,0)v=v$, we obviously have
  $\skalp{\tilde\F_\lambda v,v}=\norm{v}^2>0$ $\forall v\not=0$ and therefore all assumption of
  \cite[Theorem 6.2]{GfrMo19} for the isolated calmness property of the solution map to the
  parameterized variational system \eqref{EqAuxGELinProbl} are fulfilled. Thus there is a positive
  radius $\rho'$  and some constant $\beta_u>0$   such that $\B_{\rho'}\subset U'$  and for every
  $x\in B_{\rho'}(\xb)$ the solution $\hat u$ to $QP(x)$ fulfills the inequality $\norm{\hat
  u}\leq\beta_u\norm{x-\xb}$.
  Setting $L:=\sup\{\norm{\nabla g(x)}\mv x\in\B_{\rho'}(\xb)\}$, we obtain
  \[\norm{\hat d-g(\xb)}\leq \norm{g(x)-g(\xb)}+\norm{\nabla g(x)\hat u}\leq
  L(\norm{x-\xb}+\norm{\hat u})\leq L(1+\beta_u)\norm{x-\xb}\]
  and
  \[\norm{\hat y}=\norm{(\hat p^*, g(\hat x)-\hat d)}=\norm{-(\hat u,\nabla g(x)\hat u)}\leq
  \sqrt{1+L^2}\norm{\hat u}\leq \sqrt{1+L^2}\beta_u\norm{x-\xb}\]
  implying that \eqref{EqApprStepBound} holds with $\beta^2=1+L^2(1+\beta_u)^2+(1+L^2)\beta_u^2.$
  Next we choose $0<\rho\leq \rho'$ such that $\B_\rho(\xb)\times\B_{\beta\rho}(g(\xb))$ is
  contained in the neighborhood $W$ given by Lemma \ref{LemPropNonDegen}. Then $(\hat x,\hat d)$ is
  non-degenerate with modulus $\bar\gamma/2$ and we obtain
  \[\frac{\bar\gamma}2\norm{\hat\lambda}\leq \norm{\nabla g(x)\hat\lambda}=\norm{-\hat u-f(x)}\]
  showing that $\hat\lambda$ remains uniformly  bounded for $x\in \B_{\rho}(\xb)$. Further, since
  $D$ is polyhedral, there is some neighborhood $O$ of $g(\xb)$ such that $N_D(d)\subseteq
  N_D(g(\xb))$ $\forall d\in D\cap O$ and we may assume that $\rho$ is chosen small enough so that
  $\B_{\beta\rho}(g(\xb))\subset O$. Then $\hat\lambda-\lb\in \Span N_D(g(\xb))$ and we obtain
  \begin{eqnarray*}
    \bar\gamma\norm{\hat\lambda-\lb}&\leq&\norm{\nabla g(\xb)^T(\hat\lambda-\lb)}\leq \norm{\nabla
    g(x)^T\hat\lambda -\nabla g(\xb)^T\lb}+\norm{\nabla g(x)^{T}\hat{\lambda}-\nabla g(\xb)^T\hat
    \lambda}\\
    &=&\norm{-f( x)-\hat u+f(\xb)}+\norm{\nabla g(x)-\nabla g(\xb)}\norm{\hat \lambda}\leq
    (L_f+\beta_u+L_{\nabla g}\norm{\hat\lambda})\norm{x-\xb},
  \end{eqnarray*}
  where $L_f$ and $L_{\nabla g}$ denote the Lipschitz moduli  of $f$ and $\nabla g$ in
  $\B_\rho(\xb)$, respectively. This implies \eqref{EqMultBound}.
\end{proof}
Having performed the approximation step, we now turn to the Newton step. We start with the following
auxiliary lemma.
\begin{lemma}\label{LemPrepNewon}
  Let $\hat d\in D$ and let $\hat l:=\dim (\lin T_D(\hat d))$. Then for every $s\times (s-\hat l)$
  matrix $\hat W$, whose columns belong to $N_D(\hat d)$ and form a basis for $\Span N_D(\hat d)$,
  and every $\hat x\in\R^n$ there holds
  \begin{equation}
    \label{EqNewtonStepConstr}\{u\mv \nabla g(\hat x)u\in\lin T_D(\hat d)\}=\{u\mv \hat W^T\nabla
    g(\hat x)u=0\}.
  \end{equation}
  Moreover, if $(\hat x,\hat d)$ is non-degenerate then $\hat W^T\nabla g(\hat x)$ has full row
  rank $s-\hat l$.\\
\end{lemma}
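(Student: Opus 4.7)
The plan is to derive both parts directly from the identity $(\lin T_D(\hat d))^\perp = \Span N_D(\hat d)$ recalled just before Section 3, together with the assumed spanning property of $\hat W$ and the non-degeneracy inequality from Definition \ref{DefNonDegen}.

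For the equality \eqref{EqNewtonStepConstr}, I would fix $u \in \R^n$ and observe that $\nabla g(\hat x)u \in \lin T_D(\hat d)$ if and only if $\nabla g(\hat x)u \perp \Span N_D(\hat d)$. Since the columns of $\hat W$ form a basis of $\Span N_D(\hat d)$, orthogonality to the whole span reduces to orthogonality to each column, which is exactly $\hat W^T \nabla g(\hat x) u = 0$. This gives the two inclusions simultaneously.

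For the rank assertion, I would argue by transpose: $\hat W^T \nabla g(\hat x) \in \R^{(s-\hat l)\times n}$ has full row rank $s-\hat l$ precisely when $\nabla g(\hat x)^T \hat W \in \R^{n\times(s-\hat l)}$ has trivial kernel. So suppose $\alpha \in \R^{s-\hat l}$ satisfies $\nabla g(\hat x)^T \hat W \alpha = 0$ and set $\mu := \hat W\alpha$. Because the columns of $\hat W$ lie in $N_D(\hat d)$ and span $\Span N_D(\hat d)$, we have $\mu \in \Span N_D(\hat d)$. The non-degeneracy of $(\hat x,\hat d)$ with some modulus $\gamma > 0$ then gives $\gamma \|\mu\| \leq \|\nabla g(\hat x)^T \mu\| = 0$, hence $\mu = 0$. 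Since the columns of $\hat W$ form a basis of $\Span N_D(\hat d)$, they are linearly independent, forcing $\alpha = 0$.

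There is no real obstacle here: both parts are essentially linear algebra, using only the polar duality between $\lin T_D(\hat d)$ and $\Span N_D(\hat d)$ and the defining inequality of non-degeneracy. The only bookkeeping step worth checking is that $\dim \Span N_D(\hat d) = s - \hat l$, which follows at once from $(\lin T_D(\hat d))^\perp = \Span N_D(\hat d)$ and ensures that $\hat W$ has the claimed number of columns.
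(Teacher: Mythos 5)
Your proof is correct and follows the same route as the paper: for \eqref{EqNewtonStepConstr} you both use the polar identity $\lin T_D(\hat d) = (\Span N_D(\hat d))^\perp = (\range \hat W)^\perp$ to translate the condition into $\hat W^T\nabla g(\hat x)u = 0$, and for the rank assertion you both contradict full row rank against the non-degeneracy inequality applied to $\mu = \hat W\alpha \in \Span N_D(\hat d)$. The only cosmetic difference is that you spell out the inequality $\gamma\|\mu\|\leq\|\nabla g(\hat x)^T\mu\|$ and the linear independence of the columns of $\hat W$, where the paper compresses this into a one-line contradiction.
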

\begin{proof}
\eqref{EqNewtonStepConstr} is an immediate consequence of the relation
\begin{align*}&\nabla g(\hat x)u\in \lin T_D(\hat d)=(\Span N_D(\hat d))^\perp= (\range \hat
W)^\perp\ \Leftrightarrow\
\forall w\in\R^{s-\hat l}\skalp{\hat Ww, \nabla g(\hat x)u}=0\\
& \Leftrightarrow\ \hat W^T\nabla g(\hat x)u=0.
\end{align*}
Now assume that $\hat W^T\nabla g(\hat x)$ does not have full row rank and there is some
$0\not=\mu\in\R^{s-\hat l}$ with $\mu^T\hat W^T\nabla g(\hat x)=0$. Then $0\not=\hat W\mu\in \Span
N_D(\hat d)$ and $\nabla g(\hat x)^T(\hat W\mu)=0$ contradicting the non-degeneracy of $(\hat x,\hat
d)$.
\end{proof}

Assume now that $(\hat x,\hat d)$ is non-degenerate with modulus $\hat\gamma$. One can extract from
\cite[Proof of Theorem 2]{DoRo96} that
\[\widehat N_{\gph N_D}(\hat d,\hat\lambda)=\K_D(\hat d,\hat\lambda)^\circ\times \K_D(\hat
d,\hat\lambda).\] Thus, by \eqref{EqHatCalDF}, for $(p,q^*)\in\R^n\times\R^s$ the set $\hat{\cal
D}F\big((\hat x,\hat d),(\hat p^*, g(\hat x)-\hat d)\big)(p,q^*)$ consists of the elements $(\nabla
\Lag_{\hat\lambda}(\hat x)^Tp+\nabla g(\hat x)^Tq^*,d^*)$ such that
\begin{align}\label{EqCoDerivN_D}&d^*+q^*\in \widehat D^*N_D(\hat d,\hat\lambda)(\nabla g(\hat
x)p)=\begin{cases}\K_D(\hat d,\hat\lambda)^\circ&\mbox{if $-\nabla g(\hat x)p\in \K_D(\hat
d,\hat\lambda)$}\\
\emptyset&\mbox{else.}
\end{cases}
\end{align}
We have to compute suitable matrices $(A,B)\in {\cal A}_{\rm reg}^{{\cal D}^*}F\big((\hat x,\hat d),(\hat
p^*, g(\hat x)-\hat d)\big)$. This is done by choosing suitable elements $(p_i,q_i^*,d_i^*)$,
$i=1,\ldots,n+s$, fulfilling \eqref{EqCoDerivN_D} and setting $A_i,B_i$, the $i$-th row of $A$ and $B$,
respectively, to
\[A_i:=\left(p_i^T\nabla\Lag_{\hat\lambda}(\hat x)+{q_i^*}^T\nabla g(\hat x)\ \vdots\
{d_i^*}^T\right),\quad B_i:= \left(p_i^T\ \vdots\ {q_i^*}^T\right),\ i=1,\ldots,n+s.\]

Denoting $\hat l:=\dim(\lin T_D(\hat d))$, we have $\dim(\Span N_D(\hat d))=s-\hat l$ and we can find an
$s\times (s-\hat l)$ matrix $\hat W$, whose columns belong to $N_D(\hat d)$ and form a basis for $\Span
N_D(\hat d)$, cf. Lemma \ref{LemPrepNewon}. The $(s-\hat l)\times n$ matrix $\hat W^T\nabla g(x)$ has
full row rank $s-\hat l$ and thus we can find  vectors $p_i$, $i=1,\ldots, n-(s-\hat l),$ constituting an
orthonormal basis for $\ker \hat W^T\nabla g(x)$ and set $d_i^*=q_i^*=0$, $i=1,\ldots, n-(s-\hat l)$.
By \eqref{EqLinealitySp} and \eqref{EqNewtonStepConstr} we have $-\nabla g(\hat x)p_i\in \K_D(\hat
d,\hat\lambda)$ and $d_i^*+q_i^*=0\in\K_D(\hat d,\hat\lambda)^\circ$ trivially holds. The next elements
$p_i$, $i=n-(s-\hat l)+1,\ldots, n+s$ are all chosen as $0$. Further we choose the $s-\hat l$ elements
$q_i^*$, $i=n-(s-\hat l)+1,\ldots, n$, as the columns of the matrix $\hat W$ and set $d_i^*=0$. Finally we
set $q_i^*:=-d_i^*:=e_{i-n}$, $i=n+1,\ldots,n+s$, where $e_j$ denotes the $j$-th unit vector.

With this choice, the corresponding matrices $(A,B)\in {\cal A}^{{\cal D}^*}F\big((\hat x,\hat d),(\hat
p^*, g(\hat x)-\hat d)\big)$ are given by
\begin{equation}\label{EqAB}
  A=\left(\begin{array}{ccc}
  \hat Z^T\nabla\Lag_{\hat\lambda}(\hat x)&\vdots&0\\
  \cdots&&\cdots\\
  \hat W^T\nabla g(\hat x)&\vdots&0\\
  \cdots&&\cdots\\
  \nabla g(\hat x)&\vdots&-Id_s
  \end{array}\right),
  \quad B=\left(\begin{array}{ccc}\hat Z^T&\vdots&0\\  \cdots&&\cdots\\
  0&\vdots&\hat W^T\\
    \cdots&&\cdots\\
  0&\vdots&Id_s
  \end{array}\right),
\end{equation}
where $\hat Z$ is the  $n\times (n-(s-\hat l))$ matrix with columns  $p_i$, $i=1,\ldots,n-(s-\hat l)$. In
particular, we have $\hat Z^T\hat Z=Id_{n-(s-\hat l)}$ and $\hat W^T\nabla g(x)\hat Z=0$. Note that the
matrix $B$ in \eqref{EqAB} is certainly not regular.

\begin{lemma}\label{LemInvA}
  Assume that the matrix $G:=\hat Z^T\nabla \Lag_{\hat\lambda}(\hat x)\hat Z$ is regular. Then the
  matrix $A$ in \eqref{EqAB} is regular and
  \begin{equation}\label{EqInvA}A^{-1}=\left(\begin{array}
    {ccccc}\hat ZG^{-1}&\vdots&(Id_n-\hat ZG^{-1}\hat Z^T\nabla\Lag_{\hat \lambda}(\hat
    x))C^\dag&\vdots&0\\
    \cdots&&\cdots&&\cdots\\
    \nabla g(\hat x)\hat ZG^{-1}&\vdots&\nabla g(\hat x)(Id_n-\hat ZG^{-1}\hat Z^T\nabla\Lag_{\hat
    \lambda}(\hat x))C^\dag&\vdots&-Id_s
  \end{array}\right),\end{equation}
  where the $n\times(s-\hat l)$ matrix $C^\dag:=C^T(CC^T)^{-1}$ is the Moore-Penrose inverse of
  $C:=\hat W^T\nabla g(\hat x)$.
\end{lemma}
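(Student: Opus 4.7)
The plan is to verify the claimed formula by direct block multiplication, exploiting the block lower-triangular structure of $A$. The matrix $A$ naturally splits as
\[
A=\left(\begin{array}{cc}\tilde A & 0\\ \nabla g(\hat x) & -Id_s\end{array}\right),\qquad
\tilde A:=\left(\begin{array}{c}\hat Z^T\nabla\Lag_{\hat\lambda}(\hat x)\\ \hat W^T\nabla g(\hat x)\end{array}\right),
\]
where $\tilde A$ is an $n\times n$ matrix. By a Schur-complement argument, once $\tilde A$ is shown to be invertible the inverse of $A$ is simply
\[
A^{-1}=\left(\begin{array}{cc}\tilde A^{-1} & 0\\ \nabla g(\hat x)\tilde A^{-1} & -Id_s\end{array}\right),
\]
so the whole task reduces to identifying $\tilde A^{-1}$.

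To produce $\tilde A^{-1}$ I will split it column-wise as $\tilde A^{-1}=(P\,\vdots\,Q)$ with $P\in\R^{n\times(n-(s-\hat l))}$ and $Q\in\R^{n\times(s-\hat l)}$, and impose the four defining equations
\[
\hat Z^T\nabla\Lag_{\hat\lambda}(\hat x)P=Id_{n-(s-\hat l)},\ \ CP=0,\ \ \hat Z^T\nabla\Lag_{\hat\lambda}(\hat x)Q=0,\ \ CQ=Id_{s-\hat l},
\]
with $C=\hat W^T\nabla g(\hat x)$. The constraint $CP=0$ forces the columns of $P$ to lie in $\ker(\hat W^T\nabla g(\hat x))$, which by Lemma \ref{LemPrepNewon} is spanned by the columns of $\hat Z$; hence $P=\hat Z M$ for some square $M$, and the first equation gives $GM=Id$, i.e.\ $M=G^{-1}$. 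For $Q$, the right-inverse property $CC^\dag=Id_{s-\hat l}$ (which is legitimate because $C$ has full row rank, again by Lemma \ref{LemPrepNewon}, so $CC^T$ is invertible) suggests the ansatz $Q=(Id_n-\hat Z G^{-1}\hat Z^T\nabla\Lag_{\hat\lambda}(\hat x))C^\dag$; one then checks, using $C\hat Z=\hat W^T\nabla g(\hat x)\hat Z=0$ together with $\hat Z^T\nabla\Lag_{\hat\lambda}(\hat x)\hat Z=G$, that both remaining identities $\hat Z^T\nabla\Lag_{\hat\lambda}(\hat x)Q=0$ and $CQ=Id_{s-\hat l}$ hold.

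Assembling $P$ and $Q$ and computing $\nabla g(\hat x)\tilde A^{-1}$ produces exactly the matrix in \eqref{EqInvA}. The only nontrivial step is recognising the two algebraic facts on which everything hinges, namely $C\hat Z=0$ (from the construction of $\hat Z$ as a basis of $\ker C$) and the invertibility of $CC^T$ (from non-degeneracy via Lemma \ref{LemPrepNewon}); all further manipulations are routine block-matrix arithmetic. There is no real obstacle beyond keeping the block dimensions straight: the regularity of $A$ falls out automatically from the successful construction of $A^{-1}$, since having a two-sided inverse is equivalent to non-singularity.
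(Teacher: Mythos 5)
Your proof is correct and in substance matches the paper's, which simply observes that multiplying $A$ by the right-hand side of \eqref{EqInvA} gives the identity. Your block-triangular reduction to the $n\times n$ matrix $\tilde A$ and the subsequent identification of the two column blocks $P=\hat Z G^{-1}$ and $Q=(Id_n-\hat ZG^{-1}\hat Z^T\nabla\Lag_{\hat\lambda}(\hat x))C^\dag$ is just a tidy way of organizing that same verification, with the key identities $C\hat Z=0$ and the full row rank of $C$ (hence invertibility of $CC^T$) from Lemma \ref{LemPrepNewon} playing exactly the role they do in the paper.
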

\begin{proof}
 Follows by the observation that the product of $A$ with the matrix on the right hand side of
 \eqref{EqInvA} is the identity matrix.
\end{proof}
Since $D$ is polyhedral, there are only finitely many possibilities for $N_D(\hat d)$ and we assume that
for identical normal cones we always use the same matrix $\hat W$.

Note that the matrix  $\hat Z$ and consequently also the matrices $G$ and  $G^{-1}$ are not uniquely
given. Let $Z_1,Z_2$ be two $n\times (n-(s-\hat l))$ matrices whose columns form an orthogonal basis of
$\ker C$ and $G_i:=Z_i^T\nabla \Lag_{\hat\lambda}(\hat x)Z_i$, $i=1,2$. Then $Z_2=Z_1V$, where the matrix
$V:=Z_1^TZ_2$ is orthogonal, and consequently
\begin{align*}&G_2=V^TG_1V,\ G_2^{-1}=V^TG_1^{-1}V,\ Z_2G_2^{-1}=Z_1G_1^{-1}V,\
Z_2G_2^{-1}Z_2^T=Z_1G_1^{-1}Z_1^T\\ &\norm{G_2}=\norm{G_1},\ \norm{G_2}_F=\norm{G_1}_F,\
\norm{Z_2G_2^{-1}}=\norm{Z_1G_1^{-1}},\
\norm{Z_2G_2^{-1}}_F=\norm{Z_1G_1^{-1}}_F.
\end{align*}
It follows that the property of invertibility of $G$ (and consequently the invertibility of $A$), the
matrix $\hat ZG^{-1}\hat Z^T$ and the quantity $\norm{A^{-1}}_F\norm{(A\,\vdots\,B)}_F$ are independent
of the particular choice of $\hat Z$. In order to ensure that $A^{-1}$ exists and is bounded, a suitable
second-order condition has to be imposed.
\begin{assumption}\label{AssSecOrd}For every face $\F$ of the critical cone $\K_D(g(\xb),\lb)$
there is a matrix $Z_{\F}$, whose columns form an orthogonal basis of $\{u\mv\nabla g(\xb)u\in \Span
\F\}$, such that the matrix $Z_{\F}^T\Lag_{\lb}(\xb)Z_{\F}$ is regular.
\end{assumption}
In fact, if $Z_{\F}^T\Lag_{\lb}(\xb)Z_{\F}$ is regular, then $Z^T\Lag_{\lb}(\xb)Z$ is regular for every
matrix $Z$ representing the subspace $\{u\mv\nabla g(\xb)u\in \Span \F\}$. %
\begin{remark}In case when $D=\R^s_-$, let $\bar I:=\{i\in\{1,\ldots,s\}\mv g_i(\xb)=0\}$ denote
the index set of active inequality constraints and let $\bar I^+:=\{i\in\bar I\mv \lb_i>0\}$ denote the
index set of positive multipliers. Then the faces of $\K_{\R^s_-}(g(\xb),\lb)$ are exactly the sets
\[\{d\in\R^s\mv d_i=0,\ i\in J, d_i\leq 0, i\in\bar I\setminus J\},\ \bar I^+\subseteq J\subseteq
\bar I.\]
Thus Assumption \ref{AssSecOrd} says that for every index set $\bar I^+\subseteq J\subseteq \bar I$ and
every matrix $Z_J$, whose columns form an orthogonal basis of the subspace $\{u\mv \nabla g_i(\xb)u=0,\
i\in J\}$, the matrix $Z_J^T\Lag_{\lb}(\xb)Z_J$ is regular.
\end{remark}
\begin{proposition}\label{PropCalG}Assume that at the solution $\xb$ of \eqref{EqGEAppl} both
Assumption \ref{AssNonDegen} and Assumption \ref{AssSecOrd} are fulfilled and let $\widehat{\cal D}^*F$
be given by \eqref{EqHatCalDF} with $\hat\gamma=\bar\gamma/2$. Then  there are constants $\tilde
L,\kappa>0$ such that for every $x$ sufficiently close to $\xb$ not solving
\eqref{EqGEAppl} and for every $d\in\R^s_-$ the quadruple $((\hat x,\hat d), (\hat p^*,g(\hat x)-\hat
d), A,B)$ belongs to $\G_{F,(\xb,g(\xb)),{\cal D}^*}^{\tilde L,\kappa}(x, d)$, where $\hat x,\hat d, \hat
p^*$ are the result of Algorithmus \ref{AlgApprStep} and $A, B$ are given by \eqref{EqAB}. In particular,
$\G_{F,(\xb,g(\xb)),{\cal D}^*}^{\tilde L,\kappa}(x, d)\not=\emptyset$.
\end{proposition}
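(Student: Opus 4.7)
The plan is to verify the three defining conditions of $\G_{F,(\xb,g(\xb)),{\cal D}^*}^{\tilde L,\kappa}$ for the quadruple $((\hat x,\hat d),(\hat p^*,g(\hat x)-\hat d),A,B)$ produced by Algorithm \ref{AlgApprStep} together with the construction \eqref{EqAB}: (a) the proximity bound $\norm{((\hat x,\hat d),\hat y)-((\xb,g(\xb)),(0,0))}\leq \tilde L\,\norm{(x,d)-(\xb,g(\xb))}$ with $\hat y=(\hat p^*,g(\hat x)-\hat d)$; (b) membership $(A,B)\in {\cal A}^{{\cal D}^*}_{\rm reg}F((\hat x,\hat d),\hat y)$, i.e. every row of $(A,B)$ comes from an element of $\gph{\cal D}^*F((\hat x,\hat d),\hat y)$ and the resulting $A$ is regular; (c) the uniform bound $\norm{A^{-1}}\norm{(A\,\vdots\,B)}_F\leq \kappa$.

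Condition (a) is immediate from Proposition \ref{PropApprStep}: combining \eqref{EqUBound} with \eqref{EqApprStepBound} yields $\norm{((\hat x,\hat d),\hat y)-((\xb,g(\xb)),(0,0))}\leq \beta\beta_u\norm{x-\xb}\leq \beta\beta_u\norm{(x,d)-(\xb,g(\xb))}$, so $\tilde L:=\beta\beta_u$ works. The same proposition certifies that $(\hat x,\hat d)$ is non-degenerate with modulus $\hat\gamma=\bar\gamma/2$ and that the multiplier $\hat\lambda$ coincides with $\hat\lambda(\hat x,\hat d,\hat p^*)$ from Lemma \ref{LemUniqueLambda}, so the first (non-trivial) branch of \eqref{EqHatCalDF} is active. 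For the membership half of (b), a direct check of the triples $(p_i,q_i^*,d_i^*)$ chosen before \eqref{EqAB} shows that \eqref{EqCoDerivN_D} holds in every case: the columns of $\hat W$ lie in $N_D(\hat d)\subseteq \K_D(\hat d,\hat\lambda)^\circ$, and by Lemma \ref{LemPrepNewon} together with $\lin T_D(\hat d)\subseteq \K_D(\hat d,\hat\lambda)$ from Lemma \ref{LemLinealityD} we have $-\nabla g(\hat x)p_i\in\K_D(\hat d,\hat\lambda)$. Hence every row of $(A,B)$ belongs to $\gph\widehat{\cal D}^*F\subseteq\gph{\cal D}^*F$.

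For the regularity of $A$, Lemma \ref{LemInvA} reduces the task to regularity of $G=\hat Z^T\nabla\Lag_{\hat\lambda}(\hat x)\hat Z$ together with a uniform bound on $\norm{G^{-1}}$. After shrinking the neighborhood of $(\xb,g(\xb))$ in which the approximation step runs, Lemma \ref{LemLinealityD} produces, for each admissible $(\hat d,\hat\lambda)$, a face $\F$ of $\K_D(g(\xb),\lb)$ with $\lin T_D(\hat d)=\Span\F$; by Lemma \ref{LemPrepNewon} the columns of $\hat Z$ then form an orthonormal basis of $\{u\mv \nabla g(\hat x)u\in\Span\F\}$. Only finitely many faces $\F$ can arise, and for each Assumption \ref{AssSecOrd} provides regularity of $Z_{\F}^T\nabla\Lag_{\lb}(\xb)Z_{\F}$. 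A standard perturbation argument—$\hat x\to\xb$ and $\hat\lambda\to\lb$ from Proposition \ref{PropApprStep}, together with continuous dependence of an orthonormal basis of $\ker(\hat W^T\nabla g(\hat x))$ on $\hat x$ (the rank being locked by non-degeneracy)—transfers regularity to $G$ with a single uniform lower bound on $\norm{G^{-1}}^{-1}$ obtained as the worst case over the finite family of faces.

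Condition (c) is then read off from the explicit formula \eqref{EqInvA}: its blocks are built from $\hat Z$, $G^{-1}$, $\nabla g(\hat x)$, $\nabla\Lag_{\hat\lambda}(\hat x)$, $C^\dag=C^T(CC^T)^{-1}$, and the identity. Non-degeneracy with modulus $\bar\gamma/2$ gives $\norm{\mu^T\hat W^T\nabla g(\hat x)}\geq(\bar\gamma/2)\norm{\mu}$ for $\mu\in\R^{s-\hat l}$—using a uniform choice of $\hat W$ per normal cone, of which only finitely many occur—which in turn bounds $\norm{(CC^T)^{-1}}$ and hence $\norm{C^\dag}$. The remaining ingredients, as well as $\norm{(A\,\vdots\,B)}_F$ itself, are uniformly bounded by continuity of $\nabla g$ and $\nabla\Lag$ on a neighborhood of $\xb$ and boundedness of $\hat\lambda$ (Proposition \ref{PropApprStep}), yielding the desired $\kappa$. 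The main obstacle is the third paragraph: coupling the continuous dependence of $\hat Z$ on $\hat x$ (up to orthogonal changes of basis that leave $G$ and all relevant norms invariant) with the combinatorial switching across the finitely many faces of $\K_D(g(\xb),\lb)$, so as to extract a single pair $(\tilde L,\kappa)$ that works on a punctured neighborhood of $\xb$ for all $d$.
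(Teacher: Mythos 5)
Your argument follows the paper's essentially: $\tilde L:=\beta\beta_u$ from Proposition \ref{PropApprStep}, membership in ${\cal A}^{{\cal D}^*}_{\rm reg}F$ by construction of \eqref{EqAB} (with $(\hat x,\hat d)$ non-degenerate with modulus $\bar\gamma/2$ so that the first branch of \eqref{EqHatCalDF} is active), and the uniform bound on $\norm{A^{-1}}\norm{(A\,\vdots\,B)}_F$ via Lemma \ref{LemInvA}, Assumption \ref{AssSecOrd}, finiteness of faces of $\K_D(g(\xb),\lb)$, and perturbation. The ``obstacle'' you flag at the end is exactly where the paper spends its effort: it obtains a canonical, continuously ($C^1$) varying $\hat Z$ from a sign-normalized Householder QR factorization of $C=\hat W^T\nabla g(\hat x)$ (valid because $C$ has full row rank uniformly near $\xb$), uses the invariance of $\norm{A^{-1}}_F\norm{(A\,\vdots\,B)}_F$ under orthogonal changes of $\hat Z$ to replace the algorithm's $\hat Z$ by this canonical one, and then runs a contradiction argument along a sequence $x_k\to\xb$ on which, after passing to a subsequence, both $\hat W_k$ and the face $\F$ are fixed, so $\hat Z_k\to Z_\F$ and $G_k\to Z_\F^T\nabla\Lag_{\lb}(\xb)Z_\F$, contradicting the assumed blow-up.
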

\begin{proof} Let $\rho,\beta,\beta_u,\beta_\lambda$ and $U$ be as in Proposition \ref{PropApprStep}
and Lemma \ref{LemPrepNewon}, respectively, and set $\tilde L:=\beta\beta_u$. By possibly reducing $\rho$
we may assume that $\B_\rho(\xb)\times \B_{\beta_\lambda\rho}(\lb)\subset U$. Then, for every $x\in
\B_\rho(\xb)$ and every $d\in\R^s_-$ we have
\begin{equation}\label{EqAuxTildeL}\norm{(\hat x,\hat d,\hat p^*,g(\hat x)-\hat
d)-(\xb,g(\xb),0,0)}\leq \beta\beta_u\norm{x-\xb}\leq \tilde L\norm{(x,d)-(\xb,g(\xb))}\end{equation} by
Proposition \ref{PropApprStep} and there remains to show that $\norm{A^{-1}}_F\norm{(A\,\vdots\,B)}_F$ is
uniformly bounded for $x$ close to $\xb$. We consider the following possibility for computing a matrix
$\hat Z$, whose columns are an orthonormal basis for a given $m\times n$ matrix $C$. Let $Q$ be an
$n\times n$ orthogonal matrix such that $CQ=(L\,\vdots\,0)$, where $L$ is an $m\times m$ lower triangular
matrix. If ${\rm rank\,} C=m$, then $\hat Z$ can be taken as the last $n-m$ columns of $Q$, cf.
\cite[Section 5.1.3]{GiMur81}. This can be practically done by so-called Householder transformations,
see, e.g., \cite[Section 2.2.5.3]{GiMur81}. When performing the Householder transformations, the signs of
the diagonal elements of $L$ are usually chosen in such a way that cancellation errors are avoided.
However, when modifying the Householder transformations in order to obtain nonnegative diagonal elements
$L_{ii}$, it can be easily seen that the algorithm produces $Q$ and $L$ depending continuously
differentiable on $C$, provided $C$ has full row rank. Since the quantity
$\norm{A^{-1}}_F\norm{(A\,\vdots\,B)}_F$ does not depend on the particular choice of $\hat Z$, we can
assume that $\hat Z$ is computed in such a way.  Now assume that the statement of the proposition does
not hold true. In view of \eqref{EqAuxTildeL} there must be a sequence $x_k$ converging to $\xb$ such
that Algorithm \ref{AlgApprStep} produces with input $x_k$ the quantities $\hat x_k$, $\hat\lambda_k$,
$\hat p_k^*$ and $\hat d_k$  resulting by \eqref{EqAB} in matrices $\hat W_k,\hat Z_k, A_k,B_k$, where either
$A_k$ is singular or $\norm{{A_k}^{-1}}_F\norm{(A_k\,\vdots\,B_k)}_F
\to\infty$ as $k\to\infty$. Since there are only finitely many possibilities for $\hat W_k$ and
there are only finitely many faces of $\K_D(g(\xb),\lb)$, we can assume that $\hat W_k=\hat W$ and $\lin
T_D(\hat d)=\Span \F$ $\forall k$ for some face $\F$ of $\K_D(g(\xb),\lb)$ by Lemma
\ref{LemLinealityD}. In view of \eqref{EqNewtonStepConstr} we have $\{u\mv\nabla g(\xb)u\in \Span
\F\}=\ker (\hat W^T\nabla g(\xb))$ and we can assume that the matrix $Z_\F$ is computed as above
via an orthogonal factorization of the matrix $\hat W^T\nabla g(\xb)$. It follows that $\hat Z_k$
converges to $Z_\F$ and thus $\hat Z_k^T\Lag_{\hat\lambda_k}(\hat x_k)\hat Z_k$ converges to the regular
matrix $Z_{\F}^T\Lag_{\lb}(\xb)Z_{\F}$. Thus for all $k$ sufficiently large the matrices $\hat
Z_k^T\Lag_{\hat\lambda_k}(\hat x_k)\hat Z_k$  are regular and their inverses are uniformly bounded. Since
the matrices $\hat W^T\nabla g(\hat x_k)$ converge to the matrix $\hat W^T\nabla g(\xb)$ having full row
rank, its Moore-Penrose inverses converge to the one of $\hat W^T\nabla g(\xb)$. From Lemma \ref{LemInvA}
we may conclude that the matrices $A_k$ are regular and $\norm{{A_k}^{-1}}_F\norm{(A_k\,\vdots\,B_k)}_F$
remains bounded. Thus the statement of the proposition must hold true.
\end{proof}
We are now in the position to explicitly write down the Newton step. By Algorithm
\ref{AlgGenNewton} the new iterate amounts to $(\hat x,\hat d)+(s_x,s_d)$ with
\[\myvec{s_x\\s_d}=-A^{-1}B\myvec{\hat p^*\\g(\hat x)-\hat d},\]
i.e., $(s_x,s_d)$ solves the linear system
\begin{align*}&\hat Z^T\big(\nabla \Lag_{\hat\lambda}(\hat x)s_x+\Lag_{\hat\lambda}(\hat
x)\big)=0\\ &\hat W^T(g(\hat x)+\nabla g(\hat x)s_x-\hat d)=0 \\ &g(\hat x)+\nabla g(\hat x)s_x-(\hat
d+s_d)=0.
\end{align*}
Note that by the definition of $\hat W$ the second equation can be equivalently written as
\[g(\hat x)+\nabla g(\hat x)s_x-\hat d\in\ker \hat W^T=(\range W)^\perp= (\Span N_D(\hat
d))^\perp=\lin T_D(\hat d).\] It appears that we need not to compute the auxiliary variables $d$, $s_d$
and the columns of $\hat W$ need not necessarily belong to $N_D(\hat d)$.
\begin{algorithm}[Semismooth${}^*$ Newton method for solving
\eqref{EqGEAppl}]\label{AlgNewtonGEAppl}\mbox{ }
\par\hangindent\parindent\hangafter 1\noindent
1. Choose a starting point $x^{(0)}$. Set $k:=0$.
\par \hangindent\parindent\hangafter 1\noindent
2. If $x^{(k)}$ is a solution of \eqref{EqGEAppl}, stop the algorithm.
\par \hangindent2em\hangafter 1\noindent
3. Run Algorithm \ref{AlgApprStep} with input $x^{(k)}$ in order to compute
$\hat\lambda^{(k)}$, $\hat d^{(k)}$ and $\hat p^*{}^{(k)}= \Lag_{\hat \lambda^{(k)}}(x^{(k)})$.
\par \hangindent\parindent\hangafter 1\noindent
4. Set $\hat l^{(k)}=\dim (\lin T_D(\hat d^{(k)}))$ and compute an $s\times (s-\hat l^{(k)})$ matrix
$\hat W^{(k)}$, whose columns form a basis for $\Span N_D(\hat d^{(k)})$ and then an $n\times (n-(s-\hat
l^{(k)}))$ matrix  $\hat Z^{(k)}$, whose columns are an orthogonal basis for $\ker ({\hat
W{}^{(k)}}^T\nabla g(x^{(k)}))$.
\par \hangindent\parindent\hangafter 1\noindent
5. Compute the Newton direction $s_x^{(k)}$ by solving the linear system
\begin{align*}
  &{\hat Z{}^{(k)}}^T\big(\nabla
  \Lag_{\hat\lambda^{(k)}}(x^{(k)})s_x+\Lag_{\hat\lambda^{(k)}}(x^{(k)})\big)=0\\
  &{\hat W{}^{(k)}}^T\big(g(x^{(k)})+\nabla g(x^{(k)})s_x-\hat d^{(k)}\big)=0.
\end{align*}
and set $x^{(k+1)}:=x^{(k)}+s_x^{(k)}$.
\par \hangindent2em\hangafter 1\noindent
6. Increase $k:=k+1$ and go to step 2.
\end{algorithm}

\begin{theorem}\label{ThConvGEAppl}
  Assume that $\xb$ solves \eqref{EqGEAppl}  and both Assumption \ref{AssNonDegen} and Assumption
  \ref{AssSecOrd} are fulfilled. Then there is a neighborhood $U$ of $\xb$ such that for every
  starting point $x^{(0)}\in U$ Algorithm \ref{AlgNewtonGEAppl} either stops after finitely many
  iterations at a solution of \eqref{EqGEAppl} or produces a sequence $x^{(k)}$ converging
  superlinearly to $\xb$.
\end{theorem}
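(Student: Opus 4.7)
The plan is to recognize Algorithm \ref{AlgNewtonGEAppl} as an instance of the general Algorithm \ref{AlgGenNewton} applied to the reformulated GE \eqref{EqGEApplReform} at the solution $(\xb,g(\xb))$, and then invoke Theorem \ref{ThConvGenSemiSmmooth1}, whose hypotheses are delivered by the preceding Theorem \ref{ThSemiSmoothAppl} together with Proposition \ref{PropCalG}.

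First I would set up the correspondence between the two algorithms. To an iterate $x^{(k)}$ of Algorithm \ref{AlgNewtonGEAppl} I attach the auxiliary component $d^{(k)}:=g(x^{(k)})$ so that $(x^{(k)},d^{(k)})$ is an iterate of the abstract algorithm for \eqref{EqGEApplReform}. Step 3 of Algorithm \ref{AlgNewtonGEAppl} is literally Algorithm \ref{AlgApprStep}, which, independently of the chosen $d^{(k)}$, produces $\big((\hat x^{(k)},\hat d^{(k)}),(\hat p^*{}^{(k)},g(\hat x^{(k)})-\hat d^{(k)})\big)\in\gph F$ together with the multiplier $\hat\lambda^{(k)}$. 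Step 4 chooses $\hat W^{(k)}$ and $\hat Z^{(k)}$ so that the matrices $A, B$ of \eqref{EqAB} are precisely those appearing in Proposition \ref{PropCalG}. Inspecting the block form of $A, B$ in \eqref{EqAB}, the full Newton step $(s_x,s_d) = -A^{-1}B(\hat p^*{}^{(k)},g(\hat x^{(k)})-\hat d^{(k)})^T$ decouples: the first two block rows yield exactly the reduced linear system of step 5 of Algorithm \ref{AlgNewtonGEAppl} for $s_x^{(k)}$, while $s_d$ is merely recovered from the last block and discarded. With $d^{(k)}=g(x^{(k)})$ the stopping criteria ``$x^{(k)}$ solves \eqref{EqGEAppl}'' and ``$0\in F(x^{(k)},d^{(k)})$'' coincide.

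Next I would verify the hypotheses of Theorem \ref{ThConvGenSemiSmmooth1} for \eqref{EqGEApplReform} at $(\xb,g(\xb))$, taking $\widehat{\cal D}^*F$ from \eqref{EqHatCalDF} with $\hat\gamma=\bar\gamma/2$. The semismoothness$^*$ of $F$ at $\big((\xb,g(\xb)),(0,0)\big)$ with respect to ${\cal D}^*F$ is Theorem \ref{ThSemiSmoothAppl}. For the non-emptiness of $\G_{F,(\xb,g(\xb)),{\cal D}^*}^{\tilde L,\kappa}(x,d)$ near the solution for $(x,d)\notin F^{-1}(0)$, I would cite Proposition \ref{PropCalG}: combining Proposition \ref{PropApprStep} (with its bound $\norm{((\hat x,\hat d),\hat y)-((\xb,g(\xb)),0)}\leq\tilde L\norm{x-\xb}\leq\tilde L\norm{(x,d)-(\xb,g(\xb))}$), the regularity of $A$ via Lemma \ref{LemInvA} supplied by Assumption \ref{AssSecOrd}, and the uniform boundedness of $\norm{A^{-1}}_F\norm{(A\,\vdots\,B)}_F$ established in the proof of Proposition \ref{PropCalG}. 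Note that the required quadruple depends only on $x$, so the $d$-component plays no role here.

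Finally, Theorem \ref{ThConvGenSemiSmmooth1} yields $\delta>0$ such that for every starting pair $(x^{(0)},d^{(0)})\in\B_\delta(\xb,g(\xb))$ the abstract algorithm either stops at $(\xb,g(\xb))$ after finitely many steps or produces $(x^{(k)},d^{(k)})\to(\xb,g(\xb))$ superlinearly. Choosing $U:=\B_{\delta/\sqrt{1+L_g^2}}(\xb)$ with $L_g$ a Lipschitz modulus of $g$ near $\xb$ so that $d^{(0)}=g(x^{(0)})$ satisfies $\norm{(x^{(0)},g(x^{(0)}))-(\xb,g(\xb))}\leq\delta$, and projecting to the $x$-component, one concludes superlinear convergence $x^{(k)}\to\xb$. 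The only real bookkeeping point, and the mildest potential obstacle, is to confirm that the $x$-only iteration of Algorithm \ref{AlgNewtonGEAppl} genuinely reproduces the $(x,d)$-iteration of Algorithm \ref{AlgGenNewton}; this is immediate because Algorithm \ref{AlgApprStep} ignores its $d$-input and the Newton direction $s_x^{(k)}$ depends only on $(\hat x^{(k)},\hat d^{(k)},\hat\lambda^{(k)})$.
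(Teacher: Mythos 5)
Your proposal is correct and follows essentially the same route as the paper: identify Algorithm~\ref{AlgNewtonGEAppl} as the specialization of Algorithm~\ref{AlgGenNewton} to the reformulation~\eqref{EqGEApplReform}, and then invoke Theorem~\ref{ThConvGenSemiSmmooth1} with its two hypotheses supplied by Theorem~\ref{ThSemiSmoothAppl} (semismoothness${}^*$ with respect to ${\cal D}^*F$) and Proposition~\ref{PropCalG} (non-emptiness of $\G_{F,(\xb,g(\xb)),{\cal D}^*}^{\tilde L,\kappa}$). The paper's own proof is precisely this chain, stated as a one-line citation; your only slightly informal step, ``projecting to the $x$-component,'' is in fact covered because the approximation-step bound $\norm{(\hat x,\hat d,\hat y)-(\xb,g(\xb),0,0)}\le \tilde L\norm{x-\xb}$ makes the contraction estimate in the proof of Theorem~\ref{ThConvSemiSmmooth1} depend only on $\norm{x^{(k)}-\xb}$, which is exactly the observation you record.
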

\begin{proof}
  Follows from Theorem \ref{ThConvGenSemiSmmooth1} and Proposition \ref{PropCalG}.
\end{proof}
We now want to compare Algorithm \ref{AlgNewtonGEAppl} with the usual Josephy-Newton method for solving
\eqref{EqGEApplJos}. Given an iterate $(x^{(k)},\lambda^{(k)})$, the new iterate
$(x^{(k+1)},\lambda^{(k+1)})$ is computed as solution of the partially linearized system
\begin{equation}\label{EqGEApplJosLin}
  \myvec{0\\0}\in\myvec{\Lag_{\lambda^{(k)}}(x^{(k)})+ \nabla
  \Lag_{\lambda^{(k)}}(x^{(k)})(x^{(k+1)}-x^{(k)}) +\nabla
  g(x^{(k)})^T(\lambda^{(k+1)}-\lambda^{(k)})\\
  (g(x^{(k)})+\nabla g(x^{(k)})(x^{(k+1)}-x^{(k)}),\lambda^{(k+1)})}-\{0\}\times\gph N_D,
\end{equation}
i.e.,
\begin{eqnarray}\label{EqGEApplJosLinx}0&=& f(x^{(k)})+ \nabla
\Lag_{\lambda^{(k)}}(x^{(k)})(x^{(k+1)}-x^{(k)})+\nabla g(x^{(k)})^T\lambda^{(k+1)},\\
\nonumber \lambda^{(k+1)}&\in& N_D(g(x^{(k)})+\nabla
g(x^{(k)})(x^{(k+1)}-x^{(k)})).\end{eqnarray}
In order to guarantee that \eqref{EqGEApplJosLin} and
\eqref{EqGEApplJosLinx}, respectively, are solvable for all $(x^{(k)},\lambda^{(k)})$ close to $(\xb,\lb)$
one has to impose an additional condition on $\tilde F$ given by \eqref{EqGEApplJos}, e.g., metric
regularity of $\tilde F$ around $\big((\xb,\lb),(0,0)\big)$.

On the contrary the approximation step as described in Algorithm \ref{AlgApprStep} requires only the
solution of a strictly convex quadratic programming problem and the Newton step is performed by solving a
linear system. Note that Assumptions \ref{AssNonDegen},\ref{AssSecOrd} do not imply that multifunctions $x\tto
f(x)+\nabla g(x)^TN_D(g(x))$ or $(x,d)\tto F(x,d)$ are metrically regular around $(\xb, 0)$ and
$((\xb,g(\xb)),(0,0))$, respectively.
\begin{example}
  Consider the NCP
  \begin{equation}\label{EqEx}0\in -x-x^2+N_{\R_-}(x)\end{equation}
  having the unique solution $\xb=0$.  Since $g(x)=x$, we have $\nabla g(x)=1$ showing that $(0,0)$
  is non-degenerate with modulus $1$. We obtain $\lb=0$ and Assumption \ref{AssSecOrd} follows
  easily from  $\nabla L_{\lb}(\xb)=-1$. Thus Theorem \ref{ThConvGEAppl} applies and we obtain
  local superlinear convergence of Algorithm \ref{AlgNewtonGEAppl}. Indeed, given $x^{(k)}$, the
  quadratic program $QP(x^{(k)})$ amounts to
  \[\min_{u\in\R} -(x^{(k)}+{x^{(k)}}^2)u+\frac 12 u^2\ \mbox{subject to}\ x^{(k)}+u\leq 0\]
  which has the solution $u=\min\{x^{(k)}+{x^{(k)}}^2, -x^{(k)}\}$ resulting in $\hat
  d^{(k)}=x^{(k)}+\min\{x^{(k)}+{x^{(k)}}^2, -x^{(k)}\}=\min\{2x^{(k)}+{x^{(k)}}^2,0\}$. If
  $2x^{(k)}+{x^{(k)}}^2<0$, then $T_{\R_-}(\hat d^{(k)})=\R$, $\hat\lambda^{(k)}=0$, $\hat
  l^{(k)}=1$, $\hat Z^{(k)}=1$ and the Newton direction $s_x$ is given by
  \[\hat
  Z^{(k)}{}^T\big(\nabla\Lag_{\hat\lambda^{(k)}}(x^{(k)})s_x+\Lag_{\hat\lambda^{(k)}}(x^{(k)})\big)=-(1+2x^{(k)})s_x-(x^{(k)}+{x^{(k)}}^2)=0\
  \Rightarrow\ s_x=-\frac{x^{(k)}+{x^{(k)}}^2}{1+2x^{(k)}}.\]
  This yields $x^{(k+1)}={x^{(k)}}^2/(1+2x^{(k)})$. On the other hand, if
  $2x^{(k)}+{x^{(k)}}^2\geq0$, then $T_{\R_-}(\hat d^{(k)})=\R_-$,
  $\hat\lambda^{(k)}=2x^{(k)}+{x^{(k)}}^2$, $\hat l^{(k)}=0$, $\hat W^{(k)}=1$ and the Newton
  direction $s_x$ is given by
  \[\hat W^{(k)}{}^T(x^{(k)}+s_x)=0\ \Rightarrow\ s_x=-x^{(k)}\]
  resulting in $x^{(k+1)}=0$. Hence we obtain in fact locally quadratic convergence of the sequence
  produced by Algorithm \ref{AlgNewtonGEAppl}.

  Now we want to demonstrate that the Newton-Josephy method does not work for this simple example.
  At the $k$-th iterate the problem \eqref{EqGEApplJosLinx} reads as
  \begin{align*}&0\in -x^{(k)}-{x^{(k)}}^2+ (-1-2{x^{(k)}})(x^{(k+1)}-x^{(k)})+\lambda^{(k+1)}
  =-(1+2{x^{(k)}})x^{(k+1)}+{x^{(k)}}^2+\lambda^{(k+1)},\\
  & 0\leq \lambda^{(k+1)}\perp x^{(k+1)}\leq 0
  \end{align*}
  and this auxiliary problem is not solvable for any $x^{(k)}$ with $0<\vert x^{(k)}\vert\leq \frac
  12$ . The reason is that the mapping $\tilde F$ is not metrically regular at $(\xb,\lb)$.\hfill$\triangle$
\end{example}

\section{Conclusion}
The crucial notion used in developing the new Newton-type method is the \ssstar property which pertains
not only to single-valued mappings (like the standard semi-smoothness) but also to sets and
multifunctions. The second substantial ingredient in this development consists of a novel linearization
of the set-valued part of the considered GE which is performed on the basis of the respective limiting
coderivative. Finally, very important is also the modification of the semismoothness${}^*$ in Definition
\ref{DefSemiSmoothGen} which enables us to proceed even if the considered multifunction is not \ssstar in
the original sense of Definition \ref{DefSemiSmooth}.

The new method contains, apart from the Newton step, also the so-called approximation step, having two
principal goals. Firstly, it ensures that in the next linearization we dispose with a feasible point and,
secondly, it enables us to avoid points (if they exists), where the imposed regularity assumption is
violated. In this way one obtains the local superlinear convergence without imposing restrictive
regularity assumption at the solution point (like the strong BD-regularity in
\cite{QiSun93}).

The application in Section 5 illuminates the fact that the implementation to a concrete class of GEs may
be quite demanding. On the other hand, the application area of the new method seems to be very large. It
includes, among other things, various complicated GEs corresponding to variational inequalities of the
second kind, hemivariational inequalities, etc. Their solution via an appropriate variant of the new
method will be subject of a further research.
\medskip

{\bf Acknowledgements.} The research of the first author was  supported by the Austrian Science Fund
(FWF) under grant P29190-N32. The  research of the second author was supported by the Grant Agency of the
Czech Republic, Project 17-04301S, and the Australian Research Council, Project
DP160100854.


\begin{thebibliography}{99}
%
\bibitem{BonSh00}{\sc J.~F. Bonnans, A. Shapiro}, {\em Perturbation analysis of optimization
    problems}, Springer, New
York, 2000.
\vspace{-6pt}
%
\bibitem{DoGfrKruOut18}
{\sc A.~L. Dontchev, H. Gfrerer, A.~Y. Kruger, J.V.~Outrata}, {\em The radius of metric regularity},
submitted, arXiv:1807.02198.
\vspace{-6pt}

\bibitem{DoRo96}{\sc A.~L. Dontchev, R.~T. Rockafellar}, {\em  Characterizations of strong
    regularity for variational inequalities over polyhedral convex sets},
SIAM J. Optim., 6 (1996), pp.~1087--1105.
\vspace{-6pt}
\bibitem{DoRo14}{\sc A.~L. Dontchev, R.~T. Rockafellar}, {\em  Implicit Functions and Solution
    Mappings}, Springer, Heidelberg, 2014.
\vspace{-6pt}
%
\bibitem{FaPa03}
{\sc F. Facchinei, J.-S. Pang}, {\em Finite-Dimensional Variational Inequalities and Complementarity
Problems}, vol. II, Springer, New York, 2003.
\vspace{-6pt}
\bibitem{Gfr13a}
{\sc H. Gfrerer}, {\em On directional metric regularity, subregularity and optimality conditions for
nonsmooth mathematical programs}, Set-Valued Var. Anal., 21 (2013), 151--176.
\vspace{-6pt}
\bibitem{Gfr14a}{\sc H. Gfrerer}, {\em Optimality conditions for disjunctive programs based on
    generalized differentiation with
application to mathematical programs with equilibrium constraints}, SIAM J. Optim., 24 (2014),
pp.~898--931.
\vspace{-6pt}
\bibitem{GfrMo17a}{\sc H. Gfrerer, B.~S. Mordukhovich}, {\em Robinson stability of parametric
    constraint systems via variational analysis}, SIAM J. Optim., 27(2017), pp.~438--465.
\vspace{-6pt}
%

\bibitem{GfrMo19}{\sc H. Gfrerer, B.~S. Mordukhovich}, {\em Second-order variational analysis of
    parametric constraint and variational systems},  SIAM J. Optim. 29 (2019), pp.~423--53.
\vspace{-6pt}
%
\bibitem{GO3}{\sc H. Gfrerer, J.V.~Outrata}, {\em On Lipschitzian properties of implicit
    multifunctions}, SIAM J. Optim., 26 (2016), pp.~2160--2189.
\vspace{-6pt}
\bibitem{GiMur81}{\sc P.~E. Gill, W. Murray, M.~H. Wright}, {\em Practical Optimization}, Academic
    Press, London, 1981.
\vspace{-6pt}
%
\bibitem{HO01} {\sc R.~Henrion, J.V.~Outrata}, {\em A subdifferential condition for calmness of
    multifunctions}, J. Math. Anal. Appl. 258 (2001), pp.~110--130.
\vspace{-6pt}
\bibitem{IzSo14}
{\sc A.~F. Izmailov, M.~V. Solodov}, {\em Newton-Type Methods for Optimization and Variational Problems},
Springer, Cham, 2014.
\vspace{-6pt}
%
\bibitem{IzSo15}
{\sc A.~F. Izmailov, M.~V. Solodov}, {\em Newton-type methods: A broader view}, J. Optim. Theory Appl.,
164 (2015), pp.~577--620.
\vspace{-6pt}
%
\bibitem{Jo79a}
{\sc N.~H. Josephy}, {\em Newton's method for generalized equations and the PIES energy model}, Ph.D.
Dissertation, Department of Industrial Engineering, University of Wisconsin-Madison, 1979.
\vspace{-6pt}
%
\bibitem{Jo79b}
{\sc N.~H. Josephy}, {\em Quasi-Newton method for generalized equations}, Technical Summary Report 1966,
Mathematics Research Center, University of Wisconsin-Madison, 1979.
\vspace{-6pt}
%
\bibitem{KlKum02}
{\sc D. Klatte, B. Kummer}, {\em Nonsmooth equations in optimization. Regularity, calculus, methods and
applications}, Nonconvex Optimization and its Applications 60, Kluwer Academic Publishers, Dordrecht,
Boston, London, 2002.
\vspace{-6pt}
%
\bibitem{KlKum18}
{\sc D. Klatte, B. Kummer}, {\em Approximation and generalized Newton methods}, Math Program., Ser. B,
168 (2018), pp.~673--716.
\vspace{-6pt}
%
\bibitem{Ku88}
{\sc B. Kummer}, {\em Newton's method for non-differentiable functions}, in Advances in Mathematical
Optimization, J. Guddat et al, eds., Series in Mathematical Research, Vol.45, pp.~114--125.
Akademie-Verlag, Berlin, 1988.
\vspace{-6pt}
%
\bibitem{Ku92}
{\sc B. Kummer}, {\em Newton's method based on generalized derivatives for nonsmooth functions:
Convergence analysis}, in Advances in Optimization,  W. Oettli and D. Pallaschke, eds., Springer-Verlag,
Berlin, 1992, pp. 171--194.
\vspace{-6pt}
%
\bibitem{Mif77}
{\sc R. Mifflin}, {\em Semismooth and semiconvex functions in constrained optimization}, SIAM J. Control
Optim., 15 (1977), pp.~957--972.
\vspace{-6pt}
%
\bibitem{Mo18}{\sc B.~S. Mordukhovich}, {\em Variational Analysis and Applications}, Springer, Cham
    (2018)
\vspace{-6pt}
\bibitem{MoOutRa16} {\sc B.~S. Mordukhovich, J.~V. Outrata, H. Ramirez C.}, {\em Graphical
    derivatives and stability analysis for parameterized equiolibria with conic constraints},
    Set-Valued Var. Anal., 23 (2015), pp.~678--704.
\vspace{-6pt}
%
\bibitem{OKZ}
{\sc J.V.~Outrata, M.~Ko\v{c}vara, J.~Zowe}, {\em Nonsmooth Approach to Optimization Problems with
Equilibrium Constraints}, Kluwer Academic Publishers, Dordrecht, 1998.
\vspace{-6pt}
%
\bibitem{QiSun93}
{\sc L. Qi, J. Sun}, {\em A nonsmooth version of Newton's method}, Mathematical Programming, 58 (1993),
pp.~353--367.
\vspace{-6pt}
%
\bibitem{RoWe98}
{\sc R.~T. Rockafellar, R.~J-B. Wets}, {\em Variational Analysis}, Springer, Berlin, 1998.
\vspace{-6pt}
\bibitem{SchrZo92}
{\sc H. Schramm, J. Zowe}, {\em A version of the bundle idea for minimizing a nonsmooth function:
conceptual idea, convergence analysis, numerical results}, SIAM J. Optim., 2 (1992), pp.~121--152.
\vspace{-6pt}
%
\bibitem{Sha90}{\sc A.~ Shapiro}, {\em On concepts of directional differentiability}, J. Optim.
    Theory Appl. 66 (1990), pp.~447--480.

\end{thebibliography}
\end{document}